\setlist[enumerate]{label={\rm(\roman*)},leftmargin=6ex}
\newcommand{\R}{\mathbb{R}}
\newcommand{\rn}{\mathbb{R}^n}
\newcommand{\N}{\mathbb{N}}
\newcommand{\MM}{\mathcal{M}}
\newcommand{\FF}{\mathcal{F}}
\newcommand{\RR}{\mathcal{R}}
\renewcommand{\d}{{\fam0 d}}
\newcommand{\kappab}{\kappa_\beta}
\newcommand{\med}{\operatorname{med}}
\newcommand{\mv}{\operatorname{mv}}
\newcommand{\mm}{\operatorname{m}}
\newcommand{\sgn}{\operatorname{sgn}}
\newcommand{\sprt}{\operatorname{sprt}}
\let\tilde\widetilde
\let\hat\widehat
\newtheoremstyle{MyPlain}{}{}{\itshape}{}{\bfseries}{.}{5pt plus 4pt minus 3pt}{\thmname{#1}\thmnumber{ #2}\thmnote{ \textbf{[#3]}}}
\theoremstyle{MyPlain}
\newtheorem{theorem}{Theorem}[section]
\newtheorem{lemma}[theorem]{Lemma}
\newtheorem{proposition}[theorem]{Proposition}
\newtheoremstyle{MyRemark}{}{}{\upshape}{}{\bfseries}{.}{5pt plus 1pt minus 1pt}{}
\theoremstyle{MyRemark}
\newtheorem{remark}[theorem]{Remark}
\numberwithin{equation}{section}
\let\expandafter\oldproof\csname\string\proof\endcsname
\let\oldendproof\endproof
\renewenvironment{proof}[1][\proofname]{%
  \oldproof[{{\bf #1.}}]%
}{\oldendproof}
\def\paragraph{\bigskip\@startsection{paragraph}{4}%
  \z@\z@{-\fontdimen2\font}%
  {\normalfont\bfseries}}
\newcommand{\opnorm}{\@ifstar\@opnorms\@opnorm}
\newcommand{\@opnorms}[1]{%
	\left|\mkern-1.5mu\left|\mkern-1.5mu\left|
	#1
	\right|\mkern-1.5mu\right|\mkern-1.5mu\right|
}
\newcommand{\@opnorm}[2][]{%
  \mathopen{#1|\mkern-1.5mu#1|\mkern-1.5mu#1|}
  #2
  \mathclose{#1|\mkern-1.5mu#1|\mkern-1.5mu#1|}
}
\newcommand{\ib}{\frac{1}{\beta}}
\newcommand{\iib}{\frac{2}{\beta}}
\newcommand{\RG}{(\rn,\gamma_n)}
\newcommand{\WexpLb}{W^1\exp L^\beta\RG}
\newcommand{\dgn}{\d\gamma_n}
\newcommand{\normI}[1]{\opnorm*{\frac{1}{\,I\,}}_{{#1}\bigl(\Phi(t),\frac12\bigr)}}
\newcommand{\normIB}{\normI{L^{\tilde B}}}
\newcommand{\AL}{\underline{A}}
\newcommand{\aL}{\underline{a}}
\newcommand{\AU}{\mathcal A}
\begin{document}

\title{Moser inequalities in Gauss space}

\begin{abstract}
The sharp constants in a family of exponential Sobolev type inequalities in
Gauss space are exhibited. They constitute the Gaussian analogues of the Moser
inequality in the borderline case of the Sobolev embedding in the Euclidean
space. Interestingly, the Gaussian results have features in common with the
Euclidean ones, but also reveal marked diversities.
\end{abstract}

\author{Andrea Cianchi\textsuperscript{1}}
\address{\textsuperscript{1}Dipartimento di Matematica e Informatica ``Ulisse Dini'',
University of Florence,
Viale Morgagni 67/A, 50134
Firenze,
Italy}
\email{andrea.cianchi@unifi.it}
\urladdr{0000-0002-1198-8718}

\author{V\'\i t Musil\textsuperscript{1,2,3}}
\address{\textsuperscript{2}The Czech Academy of Sciences,
Institute of Mathematics,
\v Zitn\' a 25,
115~67, Prague~1,
Czech Republic}
\email{musil@math.cas.cz}
\urladdr{0000-0001-6083-227X}

\author{Lubo\v s Pick\textsuperscript{3}}
\address{\textsuperscript{3}Department of Mathematical Analysis,
Faculty of Mathematics and Physics,
Charles University,
So\-ko\-lo\-vsk\'a~83,
186~75 Praha~8,
Czech Republic}
\email{pick@karlin.mff.cuni.cz}
\urladdr{0000-0002-3584-1454}


\subjclass[2000]{46E35, 28C20}
\keywords{%
Gaussian Sobolev inequalities;
Gauss measure;
Exponential inequalities;
Moser type inequalities;
Orlicz spaces.}

\maketitle

\bibliographystyle{abbrv_doi}

\makeatletter
   \providecommand\@dotsep{2}
\makeatother

\section*{How to cite this paper}
\noindent
This paper has been accepted to \emph{Mathematische Annalen}
and the final publication is available at
\begin{center}
	\url{https://doi.org/10.1007/s00208-020-01956-z}.
\end{center}
Should you wish to cite this paper, the authors would like to cordially ask you
to cite it appropriately.

\section{Introduction and main results}

The present paper deals with a family of exponential type Sobolev inequalities
in Gauss space $\RG$, namely the space $\rn$ endowed with the Gauss probability
measure $\gamma_n$ given by
\begin{equation*}
	\dgn(x) = (2\pi)^{-\frac{n}{2}} e^{-\frac{|x|^2}{2}}\,\d x
		\quad\text{for $x\in \rn$.}
\end{equation*}
The inequalities to be considered admit diverse variants.  All of them concern,
for a given $\beta>0$, the uniform bound
\begin{equation} \label{april1}
		\int_{\rn} e^{\left(\kappa |u|
			\right)^{\frac{2\beta}{2+\beta}}} \,\dgn
		\le C
\end{equation}
 for suitable positive constants $\kappa$ and $C$, and  for every
weakly differentiable function $u$ in $\rn$ subject to a constraint on some
kind of exponential integrability for $|\nabla u|^\beta$, and  to the
normalization
\begin{equation}\label{mu}
	\mm(u)=0.
\end{equation}
Here, and in what follows, $\mm(u)$ denotes either the mean value $\mv (u)$ or
the median $\med (u)$ of $u$ over $\RG$.

The most straightforward version of the relevant gradient constraint reads
\begin{equation} \label{E:integral-form-M}
	\int_{\rn} {e^{|\nabla u|^\beta}}\,\dgn
		\le M
\end{equation}
for some constant $M>1$.   This assumption on $M$ is made since the integral in
\eqref{E:integral-form-M} cannot be smaller than $1$, and equals $1$ if and
only if $u$ is constant.

Inequalities of this form go back to \citep{Aid:94,Cia:09,Bob:99,Led:95}. They
can be equivalently stated as  embeddings of the Gaussian Orlicz-Sobolev spaces
$W^1\exp L^\beta\RG$ into the Orlicz spaces  $\exp
L^{\frac{2\beta}{2+\beta}}\RG$, associated with  Young functions equivalent
near infinity to $e^{t^\beta}$ and $e^{t^{\frac{2\beta}{2+\beta}}}$,
respectively.  In particular,   in \citep{Cia:09}  it is shown  that  the
exponent $\frac{2\beta}{2+\beta}$ in \eqref{april1} is the largest possible
that makes these embeddings true. Interestingly, since $\frac{2\beta}{2+\beta}
< \beta$, there is a loss in the degree of integrability between $|\nabla u|$
and $u$ in the exponential scale.  In fact, results of  \citep{Cia:09} ensure
that $\exp L^{\frac{2\beta}{2+\beta}}\RG$ is the optimal target space for
embeddings of $W^1\exp L^\beta\RG$ within the class of all Orlicz spaces on $\RG$,
and even in the larger class of all rearrangement-invariant spaces.

The exponential embeddings in question  extend, at a different scale,
a family of Gaussian embeddings for the Sobolev spaces $W^{1,p}\RG$, with $p
\in [1, \infty)$, into the Orlicz spaces $L^p(\log L)^{\frac p2}\RG$, where at least a minimal gain of integrability
between $|\nabla u|$
and $u$ is guaranteed.
This
result for $p=2$  was established in the seminal paper by Gross \citep{Gro:75},
whose researches  were also motivated by applications to quantum field theory
and to inequalities on infinite-dimensional  spaces.   The generalization to the case when $p\neq 2$
is contained in  \citep{Ada:79}.

In  \citep{Cia:09}, a borderline Gaussian Orlicz space, in the  region between
power and exponential type spaces, is exhibited with the property that
membership of $|\nabla u|$ in this space implies  that $u$ belongs exactly to
the same space,   this piece of information about its degree of integrability
being sharp. The Orlicz space in question is denoted by $\exp (\tfrac 14
\log^2L) \RG$, and is built upon any Young function equivalent to $e^{\frac 14
\log^2t}$ near infinity. The relevant Sobolev embedding thus  tells us that
$W^1 \exp (\tfrac 14 \log^2L) \RG$ is embedded into $\exp (\tfrac 14 \log^2L)
\RG$, and that the target space is optimal among all rearrangement-invariant
spaces.

Further results about Gaussian
Sobolev type inequalities are the subject of a rich literature in the areas of
convexity in high dimensions, isoperimetric inequalities, spectral theory,
probability, hypercontractive semigroups. Besides those mentioned above,
contributions in this connection include \citep{Bar:06, Bar:08,
Bob:98, Bra:07, Bob:97, Car:01, Cip:00, Fei:75, Fuj:11, Mil:09, Pel:93,
Rot:85}.

Our focus is on a sharp form of inequality \eqref{april1}. Specifically, we
investigate the optimal---largest possible---constant $\kappa$ for which
inequality \eqref{april1} holds under the normalization condition \eqref{mu},
and either  \eqref{E:integral-form-M} or some alternate closely related
assumption.

This can be regarded as a Gaussian counterpart of  the question addressed in
the celebrated paper by Moser \citep{Mos:70}, dealing with the optimal constant
in an exponential inequality established in \citep{Poh:65,Tru:67,Yud:61}. The
latter arises in the borderline case of the  Sobolev embedding theorem in the
Euclidean setting, namely in (subsets of) $\rn$ equipped with the Lebesgue
measure.  Moser's inequality tells us that there exists a constant
$C=C(n)$ such that
\begin{equation} \label{april3}
	\int_{\rn} \left(e^{\left(n\omega_n^{1/n} |u|\right)^{n'}} - 1\right)\d x
		\le C \lvert\sprt(u)\rvert
\end{equation}
for every  weakly differentiable  function $u$ in $\rn$, with support of finite
Lebesgue measure,  fulfilling
\begin{equation} \label{april4}
	\int_{\rn} |\nabla u|^n  \,\d x
		\le 1.
\end{equation}
Here, $\omega_n$ denotes the Lebesgue measure of the unit ball in $\rn$,
$\lvert\sprt(u)\rvert$ stands for the  measure of the support of $u$, and $n'=\tfrac
n{n-1}$. Moreover, the constant $n\omega_n^{1/n}$ is sharp in inequality
\eqref{april3}, since the integral on the left-hand side fails to be uniformly
bounded under constraint \eqref{april4} and under an upper bound for
$\lvert\sprt(u)\rvert$, if $n\omega_n^{1/n}$ is replaced by any larger
constant.

Such a result  has paved the way to  numerous  investigations on exponential
inequalities for limiting Sobolev embeddings, including versions for
higher-order derivatives \citep{Ada:88, Alb:08, Fon:93}, unrestricted  supports
\citep{Lam:13, Fon:18, Li:08, Ruf:05, Ish:11, Mas:15},  more general measures
in \eqref{april3} \citep{Cia:08,Fon:11}, subsets of $\rn$ and arbitrary
boundary values \citep{Cia:05,Fon:12,Lec:05}, Riemannian manifolds
\citep{Li:05,Fon:93,Kar:16,Bra:13,Yan:12,Bec:93}, the Heisenberg group
\citep{Coh:01} or more general Carnot groups \citep{Bal:03}, perturbations of
the space $W^{1,n}(\rn)$ \citep{Alv:96,Alb:08,Hen:03,Cer:10}.

The conclusions that will be derived on the Gaussian inequality \eqref{april1}
share some  traits with the Euclidean ones, but also exhibit sharp
dissimilarities. This is not only due to the presence of a measure that  decays
exponentially fast near infinity but also to an exponential integrand in the
gradient constraint.

Our results can be stated  with a gradient constraint either in integral form,
as in \eqref{E:integral-form-M}, or in a norm form. The two formulations are
not completely equivalent, because of the  nature of norms in Orlicz spaces.
Also, weak type norms of the gradient in exponential spaces---also called
Marcinkiewicz norms---are included in our discussion. In all these variants,
the sharp constant $\kappa$ in inequality \eqref{april1}, namely the supremum
among all values of $\kappa$ that render it true, turns out to depend only on
$\beta$ and agrees with
\begin{equation} \label{E:kappab}
	\kappab = \frac{1}{\sqrt{2}}+\frac{\sqrt{2}}{\beta}.
\end{equation}
Differences arise in connection with the central property that such a supremum
be attained or not, namely with the validity of inequality \eqref{april1} with
$\kappa = \kappab$.  Notice that the fact that $\kappab$ is independent of the dimension
$n$ is consistent with the whole theory of Gaussian  Sobolev inequalities.

Here, we state the result about problem
\eqref{april1}-\eqref{E:integral-form-M}.  The picture is completed   in
Section~\ref{main}, where variations on constraint \eqref{E:integral-form-M}
are analyzed.  In all cases, the conclusions take a different form depending on
whether $\beta\in(0,2]$ or $\beta\in(2,\infty)$.  The limiting situation when,
instead of  \eqref{E:integral-form-M}, a bound on $\|\nabla u\|_{L^\infty(\rn,
\gamma_n)}$ is imposed, is  considered as well.

\begin{theorem}[Integral form] \label{T:integral-form}
Let $n \ge 1$.
\par \noindent
{\rm Part 1}. Assume that $\beta\in(0,2]$.
\begin{enumerate}[label={\rm(1.\roman*)}]
	\item If $0<\kappa\le\kappab$, then for every $M>1$ there exists a constant
	$C=C(\beta,M)$ such that inequality \eqref{april1} holds for every  function
	$u$ obeying  \eqref{mu} and \eqref{E:integral-form-M}.

	\item If $\kappa>\kappab$, then for any $M>1$ there exists a function $u$
	obeying  \eqref{mu} and \eqref{E:integral-form-M} that makes the integral in
	\eqref{april1} diverge.
\end{enumerate}
{\rm Part 2}.  Assume that $\beta\in(2,\infty)$.
\begin{enumerate}[label={\rm(2.\roman*)}]
	\item If $0<\kappa<\kappab$, then for every $M>1$ there exists a constant
	$C=C(\beta,M)$ such that inequality \eqref{april1} holds for every  function
	$u$ obeying \eqref{mu} and \eqref{E:integral-form-M}.

	\item If $\kappa=\kappab$, then there exist $M>1$ and $C>0$ such that
	inequality \eqref{april1} holds for every function $u$ obeying
	\eqref{mu} and \eqref{E:integral-form-M}, and there exists $M> 1$ such that
	\eqref{april1} fails, whatever $C$ is, as $u$ ranges over all functions
	obeying \eqref{mu} and \eqref{E:integral-form-M}.

	\item If $\kappa>\kappab$, then for any $M>1$ there exists a function $u$
	obeying \eqref{mu} and \eqref{E:integral-form-M} that makes the integral in
	\eqref{april1} diverge.
\end{enumerate}
\end{theorem}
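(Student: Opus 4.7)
The plan is to reduce \eqref{april1}--\eqref{E:integral-form-M} to a sharp one-dimensional inequality by Gaussian symmetric rearrangement, to translate the reduced problem to the interval $(0,1)$ via the Gaussian distribution function, to derive a sharp asymptotic bound on the admissible growth of the radial profile from the exponential constraint on the gradient, and finally to check optimality by testing against a suitably truncated extremal family.

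First I would pass to the Gaussian symmetrization $u^\star$, namely the nonincreasing function of $x_1$ alone whose $\gamma_n$-distribution function coincides with that of $u$. By the Ehrhard-type P\'olya--Szeg\H o principle in $\RG$, valid for every convex Young function applied to $|\nabla u|$, the constraint \eqref{E:integral-form-M} is not enlarged under this replacement, while $\mm(u)=0$ and the left-hand side of \eqref{april1} are preserved. Writing $u^\star(x)=v(x_1)$ and setting $f(s)=v(\Phi^{-1}(s))$, with $\Phi$ the standard Gaussian distribution function, the problem reduces to bounding
\begin{equation*}
	\int_0^1 e^{(\kappa|f(s)|)^{\frac{2\beta}{2+\beta}}}\,\d s
\end{equation*}
under $\int_0^1 e^{|I(s)f'(s)|^\beta}\,\d s \le M$ together with $f(\tfrac12)=0$, where $I(s)=\Phi'(\Phi^{-1}(s))$ is the Gaussian isoperimetric profile.

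The analytic heart is a pointwise asymptotic control on $f$. Using the classical asymptotics $I(s)\sim s\sqrt{2\log(1/s)}$ as $s\to 0^+$ together with a logarithmic Markov estimate applied to the scalar quantity $|If'|^\beta$, one obtains $|f'(s)|\le(\log(C_M/s))^{1/\beta}/I(s)$ pointwise, and integrating from $s$ up to $\tfrac12$ yields
\begin{equation*}
	|f(s)|\le \frac{1}{\kappab}\,\bigl(\log\tfrac{1}{s}\bigr)^{\frac{2+\beta}{2\beta}} + R_\beta(M,s)\qquad\text{as } s\to 0^+,
\end{equation*}
with an explicit remainder $R_\beta$ whose size relative to the leading term governs everything. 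Raising to the exponent $\frac{2\beta}{2+\beta}$ gives $(\kappa|f(s)|)^{\frac{2\beta}{2+\beta}}\le(\kappa/\kappab)^{\frac{2\beta}{2+\beta}}\log(1/s)+(\text{correction})$, so the exponential integrand is Lebesgue-integrable on $(0,1)$ as soon as $\kappa\le\kappab$ in the case $\beta\in(0,2]$, and as soon as $\kappa<\kappab$, or $\kappa=\kappab$ with $M$ sufficiently close to~$1$, in the case $\beta\in(2,\infty)$. The analogue near $s=1$ closes the positive parts of (1.i), (2.i) and the first half of (2.ii).

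For the sharpness claims (1.ii), (2.iii) and the negative half of (2.ii), I would test \eqref{april1} against the one-parameter family $u(x)=v_T(x_1)$ obtained by truncating at $|t|=T$ the formal extremizer with gradient $|v_T'(t)|\propto(t^2/2)^{1/\beta}$, so that the constraint in \eqref{E:integral-form-M} is finite and can be tuned to any prescribed $M>1$, and then normalizing the mean or median to zero. Letting $T\to\infty$, the leading asymptotics of $v_T$ saturates the pointwise bound of the previous paragraph, forcing $(\kappa v_T(t))^{\frac{2\beta}{2+\beta}}\gtrsim t^2/2$ on a set of large Gaussian measure as soon as $\kappa>\kappab$, which drives the integral in \eqref{april1} to infinity; when $\beta>2$, the remainder $R_\beta(M,s)$ is of the same order as the leading term, so the same blow-up mechanism already operates at $\kappa=\kappab$ once $T$---hence $M$---is sufficiently large. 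The main obstacle is precisely to pin down the dependence of $R_\beta$ on $M$ and $\beta$ with sharp constants: the whole dichotomy between Part~1 and Part~2 at the threshold $\kappa=\kappab$ is governed by whether this remainder is of strictly lower order than the leading term (true if and only if $\beta\le 2$), and this requires a sharp form of the logarithmic Markov inequality combined with a careful expansion of the isoperimetric profile beyond its leading order.
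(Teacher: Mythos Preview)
Your reduction via Ehrhard symmetrization and the change of variable $s=\Phi(t)$ is the right opening, and your test family for the negative assertions is essentially the one used in the paper. The serious gap is in the positive direction at the threshold $\kappa=\kappab$ for $\beta\in(0,2]$.

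First, the ``logarithmic Markov estimate'' does not give a pointwise bound $|I(s)f'(s)|\le(\log(C_M/s))^{1/\beta}$: from $\int_0^1 e^{|If'|^\beta}\,\d s\le M$ you only obtain the rearrangement bound $(If')^*(s)\le(\log(M/s))^{1/\beta}$, i.e.\ Marcinkiewicz-type control $\|If'\|_{m^B}\le 1$ for a suitable $B$. To pass from this to a bound on $f(s)=\int_s^{1/2}(-f')I\cdot\tfrac{1}{I}$ you need a Hardy--Littlewood step (as in the paper's Proposition~\ref{P:symubound}), not a pointwise substitution.

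Second, and more importantly, even after this fix the argument does \emph{not} close (1.i) at $\kappa=\kappab$. The bound you obtain is exactly $\FF_{m^B}$ from \eqref{E:F-mB}, and Lemma~\ref{L:B-integral} shows that for $\beta\in(0,2)$
\[
	\bigl[\kappab\FF_{m^B}(t)\bigr]^{\frac{2\beta}{2+\beta}}-\tfrac{t^2}{2}
		= +\tfrac{2}{2-\beta}\log t+\cdots,
\]
with a \emph{positive} second-order term, so the right-hand side of \eqref{E:exponential-estimate} diverges. Your remainder $R_\beta$ is indeed of lower order, but it enters with the wrong sign; ``lower order'' is not enough once one exponentiates. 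This is not an artifact: Proposition~\ref{P:MB-critical-beta-small} and Theorem~\ref{T:weak-form}\,(1.ii) show that under the Marcinkiewicz constraint the inequality genuinely fails at $\kappa=\kappab$ for $\beta\in(0,2]$.

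The paper circumvents this by first converting the modular constraint \eqref{E:integral-form-M} into a Luxemburg-norm constraint $\|\nabla u\|_{L^B}\le1$ (Lemma~\ref{L:modular-implies-norm-to-M-is-B}) and then applying the sharp H\"older inequality in Orlicz spaces to estimate $u^\circ(s)$ by $\opnorm{1/I}_{L^{\tilde B}(s,1/2)}$ (Lemma~\ref{L:Holder-estimates}). The key technical point is Lemma~\ref{L:I-norm-sharp}: the Orlicz norm $\FF_{L^B}$ satisfies
\[
	\bigl[\kappab\FF_{L^B}(t)\bigr]^{\frac{2\beta}{2+\beta}}-\tfrac{t^2}{2}
		\le -\tfrac{2}{2-\beta}\log t+\cdots,
\]
with the sign flipped. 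This flip --- which comes from optimizing over the free parameter $k$ in the dual formulation of the Orlicz norm (see \eqref{feb1}--\eqref{E:tau}) --- is precisely what your Markov route cannot reproduce and is the missing idea for (1.i).
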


Let us briefly comment on some peculiarities of Theorem~\ref{T:integral-form}.
The appearance of a threshold value $\beta =2$, which dictates the form of the
result, is a new phenomenon in the frames of Moser  and Gaussian type
inequalities.  In particular, it is striking that the value of the constant $M$
appearing in condition \eqref{E:integral-form-M} is immaterial when $\beta \in
(0, 2]$, but affects the conclusions  if $\beta\in(2,\infty)$.  By contrast,
the value $1$ appearing on the right-hand side of \eqref{april4} is critical,
inasmuch as inequality \eqref{april3} fails if $1$ is replaced by any larger
constant.

One more unexpected assertion of Theorem~\ref{T:integral-form} is that,  if
$\kappa > \kappab$, then just  single functions $u$ can be exhibited, for which the
integral in \eqref{april1} diverges, to demonstrate the failure of inequality
\eqref{april1}.  Instead, the integral in \eqref{april3} is finite for each
function $u \in W^{1,n}(\rn)$ whose support has finite measure, even if $n
\omega_n^{1/n}$ is replaced by any larger constant.  Inequality  \eqref{april3}
fails in this case just because its left-hand side is not uniformly bounded by
some constant depending only on $\lvert\sprt(u)\rvert$.  An analogue in the Gaussian case
holds in the subspace $W^1\exp E^\beta\RG$ of those functions in $W^1\exp
L^\beta\RG$ such that
\begin{equation*}
	\int_{\rn} {e^{\lambda|\nabla u|^\beta}}\,\dgn
		< \infty
\end{equation*}
for every $\lambda >0$.

\begin{theorem}[Single functions in $W^1\exp E^\beta\RG$]
\label{T:E-space}
Let $\beta>0$ and let  $u\in W^1\exp E^\beta\RG$. Then
\begin{equation} \label{E:E-space-kappa}
	\int_{\rn} e^{(\kappa |u|)^\frac{2\beta}{2+\beta}}\,\dgn
		< \infty
\end{equation}
for every $\kappa >0$.
\end{theorem}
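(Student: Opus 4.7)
The plan is to deduce Theorem~\ref{T:E-space} from Theorem~\ref{T:integral-form} via a simple rescaling argument that exploits the hypothesis $u\in W^1\exp E^\beta\RG$ to its full extent. The crucial observation is that this hypothesis guarantees $\int_{\rn} e^{\lambda^\beta|\nabla u|^\beta}\dgn<\infty$ for \emph{every} $\lambda>0$, so one can freely multiply $u$ by any positive constant without losing control over the gradient exponential integral.

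My first step would be a preliminary reduction to the case $\mm(u)=0$. Setting $\tilde u=u-\mm(u)$ and $p=\tfrac{2\beta}{2+\beta}$, the elementary inequality $(a+b)^p\le c_\beta(a^p+b^p)$ (with $c_\beta=1$ when $\beta\le 2$, i.e.\ $p\le 1$, and $c_\beta=2^{p-1}$ when $\beta>2$) gives
\[
	\int_{\rn} e^{(\kappa|u|)^p}\dgn
	\le e^{c_\beta(\kappa|\mm(u)|)^p}\int_{\rn} e^{(c_\beta^{1/p}\kappa|\tilde u|)^p}\dgn.
\]
Since $c_\beta^{1/p}\kappa$ is again an arbitrary positive number, the problem reduces to $\tilde u$, which obeys $\mm(\tilde u)=0$ and $|\nabla\tilde u|=|\nabla u|$. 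Thus one may assume from now on that $\mm(u)=0$ (and discard the trivial case $u\equiv 0$).

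Given such a normalized $u$ and any target $\kappa>0$, I would pick $\lambda>\kappa/\kappab$ and set $v=\lambda u$. Homogeneity of the mean and of the median gives $\mm(v)=0$, and the $W^1\exp E^\beta$-hypothesis yields
\[
	\int_{\rn} e^{|\nabla v|^\beta}\dgn
	=\int_{\rn} e^{\lambda^\beta|\nabla u|^\beta}\dgn
	=:M<\infty.
\]
With $M':=\max(M,2)$ and $\kappa':=\kappa/\lambda<\kappab$, Theorem~\ref{T:integral-form}---in case~(1.i) if $\beta\in(0,2]$ and in case~(2.i) if $\beta\in(2,\infty)$---applies to $v$ and produces a constant $C=C(\beta,M')$ such that $\int_{\rn} e^{(\kappa'|v|)^p}\dgn\le C$. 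Since $\kappa'|v|=\kappa|u|$, this is exactly~\eqref{E:E-space-kappa}.

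I do not anticipate any real obstacle. The only thing to be careful about is to stay strictly below $\kappab$ when $\beta>2$, so as to avoid the delicate threshold case~(2.ii) in which the admissible value of $M$ is restricted. Choosing $\lambda$ strictly larger than $\kappa/\kappab$ handles this, and it also explains transparently why the stronger membership $u\in W^1\exp E^\beta\RG$, which legitimizes arbitrary rescaling of the gradient, removes any constraint on $\kappa$.
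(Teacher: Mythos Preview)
Your proof is correct and takes a genuinely different route from the paper.

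The paper argues directly: assuming $\med(u)=0$ and $\|\nabla u\|_{L^B}\le 1$, it splits the integral representation $u^\circ(s)=\int_s^{1/2}(-u^{\circ\prime}I)(r)\,\tfrac{\d r}{I(r)}$ at a threshold $s_0$, applies the Orlicz H\"older inequality on each piece, and then invokes the absolute continuity of the norm in $E^A$ to make the factor $\lVert -u^{\circ\prime}I\rVert_{L^B(0,s_0)}$ as small as desired. Combined with the asymptotic bound of Lemma~\ref{L:I-norm-sharp} for $\opnorm{1/I}_{L^{\tilde B}(s,s_0)}$, this yields a pointwise estimate $|u^\circ(s)|\le C\varrho(\log\tfrac1s)^{(2+\beta)/(2\beta)}+\text{const}$, from which integrability follows once $\varrho$ is small. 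In contrast, you bypass all of this by observing that membership in $W^1\exp E^\beta$ permits arbitrary dilation of $u$ while keeping the gradient constraint \eqref{E:integral-form-M} (with some finite $M$), so the subcritical cases (1.i) and (2.i) of Theorem~\ref{T:integral-form} already deliver the conclusion. Your argument is shorter and more conceptual, revealing Theorem~\ref{T:E-space} as a soft consequence of the subcritical Moser inequality; the paper's argument, on the other hand, is independent of Theorem~\ref{T:integral-form} and produces an explicit pointwise control of $u^\circ$ that may be of separate interest.
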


Like that of Moser's paper, and  those of most of the related contributions
mentioned above, our approach rests upon a suitable symmetrization argument,
which reduces the Gaussian inequalities in question to inequalities for
one-variable functions. The symmetrization of use in the present framework,
called  Ehrhard symmetrization in what follows, was introduced in
\cite{Ehr:84} and is in its turn related to the isoperimetric inequality in
Gauss space \cite{Bor:75, Sud:74}. Basic properties of Ehrhard  symmetrization
are recalled in Section~\ref{sec:sym}, where they are exploited in the proof of
some key inequalities for our method. The one-dimensional problems to be faced
after symmetrization present specific  difficulties compared with those arising
in the Euclidean setting. A distinctive complication is that both the
isoperimetric function in Gauss space and  its norms in exponential Orlicz
spaces do not admit  expressions in closed form. This calls for precise
asymptotic estimates for the relevant expressions, that are established in
Section \ref{SE:back}. Let us add that the choice of appropriate norms in the
Orlicz spaces is also critical   for certain inequalities to hold with exact
constants. With this material at disposal, our proofs of Theorems
\ref{T:integral-form} and \ref{T:E-space}, as well as those of the other main
results stated in Section \ref{main}, are accomplished in Section
\ref{SE:proofs}. The necessary function-space background is recalled in
Section \ref{sec:funct}  below.

\section{Function spaces}\label{sec:funct}

Basic definitions and properties concerning function spaces involved in our
discussion are collected in this section. For more details and proofs we refer
to the monographs  \citep{Ben:88} and \citep{Rao:91}.

Let $(\RR, \nu)$ be a probability space, namely a~measure space $\RR$ endowed
with a~probability measure~$\nu$. Assume  that $(\RR, \nu)$ is non-atomic. In
fact, we shall just be concerned with the case when $\RR$ is either  $\rn$
endowed with the Gauss measure ${\gamma_n}$, or $(0,1)$ endowed with the
Lebesgue measure. In the latter case, the measure will always be omitted in the
notation. More generally, we shall simply write $\RR$ instead of $(\RR, \nu)$
when no ambiguity can arise.  The notation $\MM(\RR)$ is employed for the space
of real-valued, $\nu$-measurable functions on $\RR$.

Let $\phi \in \MM(\RR)$.  The decreasing rearrangement  $\phi^*\colon[0,1]
\to [0,\infty]$ of $\phi$ is given by
\begin{equation*}
	\phi^*(s)
		= \inf\{t\ge0: \nu\left(\{x\in\RR : |\phi(x)|>t\}\right)\le s\}
		\quad\text{for $s\in [0,1]$.}
\end{equation*}
Similarly, the signed decreasing rearrangement $\phi^\circ\colon [0,1]
\to[-\infty, \infty]$ of $\phi$ is defined as
\begin{equation*}
	\phi^\circ(s)
		= \inf\{t\in\R: \nu(\{x\in\RR : \phi(x)>t\})\le s\}
		\quad\text{for $s\in[0,1]$.}
\end{equation*}
If $\phi$ is integrable on $\RR$, we also define the maximal function
$\phi^{**}\colon (0,1)\to[0,\infty]$ associated with $\phi^*$ as
\begin{equation*}
	\phi^{**}(s)
		= \frac{1}{s}\int_0^s \phi^*(r)\, \d r
		\quad\text{for $s\in (0,1)$.}
\end{equation*}
The functions $\phi^*$ and $\phi^{**}$ are non-increasing and $\phi^* \leq \phi^{**}$.

The Hardy-Littlewood inequality implies that, if $\phi, \psi \in\MM(\RR)$, then
\begin{equation}\label{HL}
	\int_E |\phi \,\psi |\,\d \nu
		\le \int_0^{|E|} \phi^*(s) \psi^*(s)\,\d s
\end{equation}
for every measurable set $E \subset \RR$.

The median $\med (\phi)$  and the mean value  $\mv (\phi)$ of $\phi$ are defined as
\begin{equation*}
	\med (\phi) = \phi^\circ (\tfrac 12)
		\quad\text{and}\quad
		\mv (\phi) = \int_\RR \phi \,\d\nu.
\end{equation*}
Of course, $\mv (\phi)$ is well defined only if  $\phi$  is integrable over $\RR$.

A Young function $A\colon[0, \infty )\to [0, \infty ]$ is a left-continuous
convex function vanishing at $0$, which is not constant in $(0, \infty)$. Any
Young function $A$ admits the representation
\begin{equation} \label{a}
	A(t) = \int_0^t a(\tau)\, \d \tau
	\quad\text{for $t \ge 0$,}
\end{equation}
for some non-decreasing left-continuous function $a\colon [0, \infty )\to [0,
\infty ]$.
Of course any finite-valued, nonnegative convex function on $[0,
\infty)$ vanishing at $0$ is continuous. Thus,  the additional assumption about
left-continuity of a Young function is only relevant in the case when it
attains the value $\infty$.

By $\widetilde A\colon[0,\infty)\to[0,\infty]$ we denote the Young conjugate of   $A$,
defined as
\begin{equation*} 
	\widetilde A(t)
		= \sup \{\tau t-A(\tau): \tau \ge 0\}
	\quad\text{for $t\ge 0$}.
\end{equation*}
The function $\widetilde A$ is also a Young function.  If $A$ is given by
\eqref{a}, then
\begin{equation*} 
	\widetilde A (t) = \int_0^ta^{-1}(\tau)\, \d\tau
		\quad\text{for $t \geq 0$,}
\end{equation*}
where $a^{-1}$ denotes the (generalized) left-continuous inverse of $a$.  The
very definition of Young conjugate ensures that
\begin{equation} \label{E:Young-ineq}
	\tau t \le A(\tau) + \tilde{A}(t)
		\quad\text{for $\tau,t>0$.}
\end{equation}
Moreover, equality holds in \eqref{E:Young-ineq} if either $\tau=a^{-1}(t)$ or
$t=a(\tau)$.

The Orlicz space $L^A(\RR)$ built upon a Young function $A$ is defined as
\begin{equation*} 
	L^A(\RR)
		= \bigg\{\phi\in \MM(\RR):
			\int_{\RR} A\biggl(\frac{|\phi |}{\lambda}\biggr)\, \d\nu  < \infty
			\,\,\text{for some $\lambda>0$}\bigg\}.
\end{equation*}
The space $L^A(\RR)$ is a Banach space equipped with the Luxemburg norm
given by
\begin{equation*}
	\|\phi\|_{L^A(\RR)}
		= \inf \bigg\{\lambda>0:
			\int_{\RR} A\bigg(\frac{|\phi |}{\lambda}\bigg)\,\d\nu\le 1
		\bigg\}
\end{equation*}
for $\phi \in L^A(\RR)$.  One has  that $L^A(\RR)=L^B(\RR)$ (up to equivalent
norms) if and only if $A$ and $B$ are Young functions equivalent near infinity,
in the sense that $A(c_1t) \le B(t) \le A(c_2t)$ for some constants $c_1$
and $c_2$, and for sufficiently large $t$.

Recall that
\begin{equation}\label{embeddings}
L^\infty (\RR) \to L^A(\RR) \to L^1(\RR)
\end{equation}
for every Young function $A$. Here, the arrow $\lq\lq \to "$ denotes continuous embedding.

The Orlicz norm
$\opnorm{\,\cdot \,}_{L^{A}(\RR)}$, given by
\begin{equation*}
	\opnorm{\phi}_{L^A(\RR)}
		= \sup\biggl\{ \int_{\RR} \phi\psi\,\d\nu:
				\int_{\RR} \tilde{A}(|\psi|)\,\d\nu\le 1
			\biggr\}
\end{equation*}
for $\phi \in L^A(\RR)$, is equivalent to the Luxemburg norm.

If $\phi\in L^{A}(\RR)$ and $E\subset\RR$ is a measurable set, we use the abridged
notations
\begin{equation*}
	\|\phi\|_{L^A(E)}
		= \|\phi\chi_E\|_{L^A(\RR)}
	 \quad\text{and}\quad
	 \opnorm{\phi}_{L^A(E)}
	 	= \opnorm{\phi\chi_E}_{L^A(\RR)}.
\end{equation*}
In particular,
\begin{equation} \label{E:Orl_char}
	\opnorm{1}_{L^A(E)}
		= \nu(E)\tilde{A}^{-1}\bigl(1/\nu(E)\bigr).
\end{equation}
Here, $\tilde{A}^{-1}$ denotes the (generalized) right-continuous inverse of
$\tilde A$.  A sharp form of the H\"older inequality in Orlicz spaces tells us
that
\begin{equation} \label{E:Holder}
	\int_{\RR} \phi\psi\,\d\nu
		\le \|\phi\|_{L^A(\RR)} \opnorm{\psi}_{L^{\tilde A}(\RR)}
\end{equation}
for every $\phi \in L^A(\RR)$ and $\psi \in L^{\tilde A}(\RR)$.

The Marcinkiewicz space $M^A(\RR)$, associated with a Young function $A$, is
defined as the space of all functions $\phi\in\MM(\RR)$ for which the norm
\begin{equation} \label{E:def-Marcinkiewicz-norm}
	\|\phi\|_{M^A(\RR)}
		= \sup_{s \in (0,1)}\frac{\phi^{**}(s)}{A^{-1}(1/s)}
\end{equation}
is finite.  We also define $m^A(\RR)$ as the collection of all functions
$\phi\in\MM(\RR)$ for which the quantity
\begin{equation} \label{E:def-Marcinkiewicz-quasi-norm}
	\|\phi\|_{m^A(\RR)}
		= \sup_{s \in (0,1)}\frac{\phi^{*}(s)}{A^{-1}(1/s)}
\end{equation}
is finite. Note that the functional $\|\cdot \|_{m^A(\RR)}$ is a quasi-norm, in
the sense that it enjoys the same properties of a norm, save that the triangle
inequality holds up to a multiplicative constant.  The embeddings $L^A(\RR) \to
M^A(\RR) \to m^A(\RR)$ hold for every Young function $A$, and
\begin{equation} \label{AtoM}
	\|\phi\|_{m^A(\RR)}
		\le \|\phi\|_{M^A(\RR)}
		\le \|\phi\|_{L^A(\RR)}
\end{equation}
for every $\phi \in L^A(\RR)$.

We denote by $E^A(\RR)$ the subspace of $L^A(\RR)$ defined by
\begin{equation} \label{Espace}
	E^A(\RR)
		= \bigg\{\phi\in \MM(\RR):
			\int_{\RR} A\biggl(\frac{|\phi |}{\lambda}\biggr)\, \d\nu  < \infty
			\,\,\text{for every $\lambda >0$}\bigg\}.
\end{equation}
The space $E^A(\RR)$  coincides with the  subspace of functions in $L^A(\RR)$
having an absolutely continuous norm. Recall that a function $\phi\in
L^A(\RR)$ is said to have an absolutely continuous norm if for every
$\varepsilon>0$ there exists $\delta>0$ such that
\begin{equation*}
	\|\phi \|_{L^A(G)} < \varepsilon
\end{equation*}
for every measurable set $G\subset\RR$ with $\nu(G)\le\delta$.

Let $\phi\in\MM(\RR)$. One has that $\phi\in L^A(\RR)$ if and only if
$\phi^* \in L^A(0,1)$.  Furthermore,
\begin{equation*} 
	\|\phi\|_{L^A(\RR)}
		= \|\phi^*\|_{L^A(0,1)}
		\quad\text{and}\quad
	\opnorm{\phi}_{L^A(\RR)}
		= \opnorm{\phi^*}_{L^A(0,1)}
\end{equation*}
for every $\phi \in \MM(\RR)$. Similarly, $\phi \in E^A(\RR)$ if and only if $\phi ^* \in E^A(0,1)$. Also,
$\phi \in M^A(\RR)$ if and only if $\phi^* \in M^A(0,1)$, and
\begin{equation*} 
	\|\phi\|_{M^A(\RR)}
		= \|\phi^*\|_{M^A(0,1)}
\end{equation*}
for every $\phi \in \MM(\RR)$. Obviously, one also has that
\begin{equation*} 
	\|\phi\|_{m^A(\RR)}
		= \|\phi^*\|_{m^A(0,1)}.
\end{equation*}
Hardy's lemma tells us that, given any nonnegative functions $\phi,\psi\in
\mathcal M(0,1)$ and any non-increasing function $\zeta\colon(0,1)\to[0,
\infty)$,
\begin{equation} \label{Hardylemma}
	\text{if\,  $\displaystyle \int _0^s \phi (r)\, \d r \le \int _0^s \psi (r)\, \d r$ \, for \, $s\in (0,1)$, \, then \, $\int _0^s \phi (r) \zeta (r)\, \d r \le \int _0^s \psi (r) \zeta (r)\, \d r$ \, for \, $s\in (0,1)$.}
\end{equation}
Given $\beta\in(0,\infty)$, we denote by $\exp L^\beta(\RR)$ the Orlicz space
associated with any Young function $B(t)$ equivalent to $e^{t^\beta}$ near
infinity. Its subspace $\exp E^\beta(\RR)$ is defined according to definition
\eqref{Espace}. Notice that, for this choice of $B$, one has that $L^B(\RR) =
M^B(\RR) = m^B(\RR)$, up to equivalent norms. By $L^p (\log L)^\alpha (\RR)$ we
denote the Orlicz space associated with any Young function $A(t)$ equivalent to
$t^p\log^\alpha t$ near infinity,  where  either  $p>1$ and $\alpha\in\R$, or
$p=1$ and $\alpha\ge 0$. The space $L^\infty(\RR)$ is also an Orlicz space
corresponding to the choice $A(t)=\infty \chi_{(1,\infty)}(t)$.

The Orlicz-Sobolev space $W^{1}L^A\RG$ associated with a Young function $A$ is
defined as
\begin{equation} \label{sobolev}
	W^{1}L^A\RG
		= \big\{u : \text{$u$ is weakly differentiable in $\rn$, and $|\nabla u|\in L^A\RG$}\big\}.
\end{equation}
Owing to the second embedding in \eqref{embeddings} and to the inclusion
$W^{1,1}\RG \subset L(\log L)^{\frac12}\RG$, any function $u\in W^1 L^A\RG$
belongs to $L^1\RG$. The space $W^{1}L^A\RG$, equipped with the norm given by
\begin{equation*}
	\|u\|_{W^{1}L^A\RG}
		= \|u\|_{L^1\RG}
		  + \|\nabla u\|_{L^A\RG}
\end{equation*}
for $u \in W^1 L^A\RG$, is a Banach space.

The space $W^1 E^A\RG$ is defined analogously, on replacing
$L^A\RG$ by $E^A\RG$ on the right-hand side of
equation \eqref{sobolev}.

The Orlicz-Sobolev spaces $W^1\exp L^\beta\RG$  and $W^1\exp E^\beta\RG$ are
associated with the spaces of exponential type $\exp L^\beta\RG$ and $\exp
E^\beta\RG$.

\section{Main results, continued}\label{main}

The results of this section complement Theorem~\ref{T:integral-form}, and
describe the conclusions that can be derived about the attainability of the
threshold constant $\kappab$ in inequality \eqref{april1} when a counterpart of
condition \eqref{E:integral-form-M} is prescribed in norm form.  Although the
norms in question are equivalent, inequality \eqref{april1} turns out to be
sensitive  to the  chosen norm, and hence the conclusions in its connection may
differ. The borderline case when the $L^\infty$ norm of the gradient replaces its norm in an exponential
space is also considered.

We begin by considering the case of the Luxemburg norm. Namely, we address  the
validity of inequality \eqref{april1} for functions $u$ fulfilling the condition
\begin{equation} \label{E:nabla-Orlicz}
	\|\nabla u\|_{L^B\RG}\le 1,
\end{equation}
where $B$ is any Young function such that
\begin{equation} \label{BN}
	B(t)=Ne^{t^\beta}
		\quad\text{for $t>t_0$,}
\end{equation}
for some $N>0$ and $t_0>0$.

For norms of this type, the situation is analogous to that stated in
Theorem~\ref{T:integral-form} under a constraint in integral form.

\begin{theorem}[Luxemburg norms]
\label{T:norm-form}
Let $n \ge 1$.
\\{\rm Part 1}. Assume that $\beta\in (0,2]$.
\begin{enumerate}[label={\rm(1.\roman*)}]
	\item If $0<\kappa\le\kappab$, then for every $N>0$ and for every Young
	function $B$ as in \eqref{BN}, there exists a constant $C=C(\beta,N,t_0)$
	such that inequality \eqref{april1} holds for every function $u$ obeying
	\eqref{mu} and \eqref{E:nabla-Orlicz}.

	\item If $\kappa >\kappab$, then for any $N>0$
	and for any Young function $B$ as in \eqref{BN},
	there exists a function $u$
	obeying  \eqref{mu} and \eqref{E:nabla-Orlicz} that makes the integral in
	\eqref{april1} diverge.
\end{enumerate}
{\rm Part 2}.  Assume that $\beta\in(2,\infty)$.
\begin{enumerate}[label={\rm(2.\roman*)}]
	\item If $0<\kappa<\kappab$, then for every $N>0$, and for every Young
	function $B$ as in \eqref{BN} there exists a constant $C=C(\beta,N,t_0)$ such
	that inequality \eqref{april1} holds for every function $u$ obeying
	\eqref{mu} and \eqref{E:nabla-Orlicz}.

	\item If $\kappa=\kappab$,  then for every $N>0$, there exist a Young
	function $B$ as in \eqref{BN} and  a constant $C>0$ such that inequality
	\eqref{april1} holds for every  function $u$ obeying   \eqref{mu} and
	\eqref{E:nabla-Orlicz}, and there exists a Young function $B$ as in
	\eqref{BN} such that inequality \eqref{april1} fails, whatever $C$ is, as $u$
	ranges over all functions obeying  \eqref{mu} and \eqref{E:nabla-Orlicz}.

	\item If $\kappa>\kappab$, then for any $N>0$
	and for any Young function $B$ as in \eqref{BN},
	there exists a function $u$ obeying \eqref{mu} and
	\eqref{E:nabla-Orlicz} that makes the integral in \eqref{april1} diverge.
\end{enumerate}
\end{theorem}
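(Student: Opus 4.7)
The plan is to reduce Theorem~\ref{T:norm-form} to Theorem~\ref{T:integral-form} through a tight two-sided translation between the Luxemburg constraint \eqref{E:nabla-Orlicz} and the integral constraint \eqref{E:integral-form-M}. First, if $\|\nabla u\|_{L^B\RG}\le 1$ with $B$ as in \eqref{BN}, then by definition of the Luxemburg norm $\int_{\rn} B(|\nabla u|)\,\dgn\le 1$, and splitting the integral at $t_0$ via \eqref{BN} produces
\begin{equation*}
	\int_{\rn} e^{|\nabla u|^\beta}\,\dgn
		\le e^{t_0^\beta}+\frac{1}{N}
		=: M_0(N,t_0).
\end{equation*}
Second, the elementary convex inequality $e^{\mu t}-1\le \mu(e^t-1)$, valid for $\mu\in(0,1]$ and $t\ge 0$, applied with $\mu=1/\lambda^\beta$, gives $\int_{\rn}e^{(|\nabla u|/\lambda)^\beta}\,\dgn\le 1+(M-1)/\lambda^\beta$ for any $u$ obeying \eqref{E:integral-form-M} and every $\lambda\ge 1$, and an analogous splitting shows that $\|\nabla u\|_{L^B\RG}\le\lambda$ for some $\lambda=\lambda(M,N,t_0)\ge 1$ that can be pushed arbitrarily close to $1$ by taking $M$ close to $1$.

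Parts~1(i), 2(i), 1(ii) and 2(iii) then follow without further work. For the positive assertions, \eqref{E:nabla-Orlicz} implies \eqref{E:integral-form-M} with $M=M_0(N,t_0)$, and the corresponding positive halves of Theorem~\ref{T:integral-form} provide \eqref{april1} with a constant depending only on $\beta$, $N$ and $t_0$. For the failure assertions with $\kappa>\kappab$, I would take a counterexample $u$ to the integral form at some effective constant $\kappa'\in(\kappab,\kappa)$, which is supplied by Theorem~\ref{T:integral-form} for every $M>1$, and rescale to $v=u/\lambda$ with $\lambda=\lambda(M,N,t_0)$; this $v$ obeys \eqref{E:nabla-Orlicz} and makes \eqref{april1} diverge at effective constant $\kappa'/\lambda$, which remains above $\kappab$ provided $M$ is chosen close enough to $1$ and $\kappa'$ close enough to $\kappa$.

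The main obstacle is the critical case Part~2(ii). The coarse identification $M=M_0(N,t_0)$ is not quantitatively sharp enough to match the specific threshold value of $M$ at which Theorem~\ref{T:integral-form} Part~2(ii) switches from success to failure, and one cannot simply invoke the integral form as a black box. The extra freedom one must exploit is that \eqref{BN} only pins down $B$ on $(t_0,\infty)$: the value of $N$ is prescribed, but $t_0$ and the convex extension of $B$ on $[0,t_0]$ are free. For the positive half, I would pick $t_0$ small and the smallest admissible convex extension on $[0,t_0]$ so that the implied integral-type constraint lies in the good range of Theorem~\ref{T:integral-form} Part~2(ii); for the failure half, I would make the opposite choice, pushing the induced integral constraint into the failure range and transporting the corresponding integral-form counterexample by the scaling procedure above. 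Making these fine adjustments rigorous requires reopening the one-dimensional problem obtained after Ehrhard symmetrization and tracking the exact constants through the reduction precisely at the threshold $\kappa=\kappab$; this is where the bulk of the technical work concentrates.
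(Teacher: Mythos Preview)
Your reduction runs in the wrong direction and is circular relative to the paper. In the paper, Theorem~\ref{T:integral-form} is \emph{derived from} Theorem~\ref{T:norm-form}: the two-sided translations you describe between \eqref{E:nabla-Orlicz} and \eqref{E:integral-form-M} are precisely Lemmas~\ref{L:modular-implies-norm-to-B-is-M}, \ref{L:norm-implies-modular-to-B-is-M} and \ref{L:modular-implies-norm-to-M-is-B}, and they are used to transport the norm-form conclusions to the integral form, not the other way around. So you cannot invoke Theorem~\ref{T:integral-form} as a black box here.

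The paper proves Theorem~\ref{T:norm-form} directly. For the positive assertions (1.i) and (2.i), Lemma~\ref{L:Holder-estimates} bounds the left side of \eqref{april1} by
\[
\sqrt{\tfrac{2}{\pi}}\int_0^\infty \exp\Bigl\{[\kappa\FF_{L^B}(t)]^{\frac{2\beta}{2+\beta}}-\tfrac{t^2}{2}\Bigr\}\,\d t,
\]
where $\FF_{L^B}(t)$ is essentially the Orlicz norm $\opnorm{1/I}_{L^{\tilde B}(\Phi(t),1/2)}$. The heart of the matter is Lemma~\ref{L:I-norm-sharp}, which obtains a sharp upper asymptotic for this norm by plugging a carefully chosen $k=k(t)$ into the dual estimate of Lemma~\ref{L:Orlicz-norm-inf}; the resulting expansion \eqref{E:I-norm-power} decides convergence for the claimed ranges of $\kappa$. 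No passage through the integral constraint occurs. For the failure assertions (1.ii), (2.iii) and for both halves of (2.ii), the paper builds explicit one-dimensional test functions and computes their Luxemburg norms directly (Propositions~\ref{P:LB-supercritical-beta-all}, \ref{P:LB-critical-beta-large}, \ref{P:mB-critical-beta-large}).

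Your plan for (2.ii) has a further specific problem beyond circularity. You propose to adjust $t_0$ and the convex extension of $B$ on $[0,t_0]$ so that the implied integral constraint lands in the ``good'' or ``bad'' range of Theorem~\ref{T:integral-form}~(2.ii). But that theorem only asserts the \emph{existence} of a good $M$ and of a bad $M$, without identifying them; and your translation $\|\nabla u\|_{L^B}\le 1\Rightarrow\int e^{|\nabla u|^\beta}\le e^{t_0^\beta}+1/N$ gives no control on whether $e^{t_0^\beta}+1/N$ is on the right side of the unknown threshold (for small $N$ it is bounded below by $1+1/N$, which may well be too large). What actually makes (2.ii) work is the computation in Lemma~\ref{L:B-integral-beta-large}, which shows that for $\beta>2$ the constant in the second-order term of \eqref{E:I-norm-power} can be driven to $-\infty$ by choosing $t_0$ large, together with the matching explicit counterexample of Proposition~\ref{P:LB-critical-beta-large}; both are genuinely norm-side arguments.
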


Let us next examine constraints on trial functions in \eqref{april1} imposed in
terms of an exponential Marcinkiewicz norm defined as in
\eqref{E:def-Marcinkiewicz-norm}, or a quasi-norm given as in
\eqref{E:def-Marcinkiewicz-quasi-norm}. Specifically, we take into account
functions $u$ subject to \eqref{mu} and either condition
\begin{equation} \label{E:nabla-Marcinkiewicz-M}
	\|\nabla u\|_{M^B\RG}\le 1,
\end{equation}
or
\begin{equation} \label{E:nabla-Marcinkiewicz-m}
	\|\nabla u\|_{m^B\RG}\le 1,
\end{equation}
where $B$ is as in \eqref{BN}. Interestingly, the result differs from that of Theorem~\ref{T:norm-form}, but is the same in both
cases \eqref{E:nabla-Marcinkiewicz-M} and \eqref{E:nabla-Marcinkiewicz-m}.

\begin{theorem}[Marcinkiewicz norms] \label{T:weak-form}
Let $n \ge 1$.
\\\emph{Part 1}.  Assume that $\beta\in(0,2]$.
\begin{enumerate}[label={\rm(1.\roman*)}]
	\item If $0<\kappa <\kappab$, then for every $N>0$ and for every Young
	function $B$ as in \eqref{BN}, there exists a constant
	$C=C(\beta,\kappa,N,t_0)$ such that inequality \eqref{april1} holds for every
	function $u$ obeying \eqref{mu} and \eqref{E:nabla-Marcinkiewicz-M}.

	\item If $\kappa\ge \kappab$, then for any $N>0$
	and for any Young function $B$ as in \eqref{BN},
	there exists a function
	$u$ obeying \eqref{mu} and \eqref{E:nabla-Marcinkiewicz-M} that makes the
	integral in \eqref{april1} diverge.
\end{enumerate}
\emph{Part 2}.  Assume that $\beta\in(2,\infty)$.
\begin{enumerate}[label={\rm(2.\roman*)}]
	\item If $0<\kappa<\kappab$, then for every $N>0$ and for every Young
	function $B$ as in \eqref{BN}, there exists a constant $C=C(\beta,\kappa,N,
	t_0)$ such that inequality \eqref{april1} holds for every function $u$
	obeying \eqref{mu} and \eqref{E:nabla-Marcinkiewicz-M}.

	\item If $\kappa=\kappab$, then for every $N>0$, there exist a Young function
	$B$ as in \eqref{BN} and  a constant $C>0$ such that inequality
	\eqref{april1} holds for every function $u$ obeying \eqref{mu} and
	\eqref{E:nabla-Marcinkiewicz-M}, and there exists a Young function $B$ as in
	\eqref{BN} such that inequality \eqref{april1} fails, whatever $C$ is, as $u$
	ranges over all functions obeying \eqref{mu} and
	\eqref{E:nabla-Marcinkiewicz-M}.

	\item If $\kappa>\kappab$, then for  any $N>0$
	and for any Young function $B$ as in \eqref{BN},
	there exists a function $u$ obeying \eqref{mu} and
	\eqref{E:nabla-Marcinkiewicz-M} that makes the integral in \eqref{april1}
	diverge.
\end{enumerate}
The same statement holds if  condition \eqref{E:nabla-Marcinkiewicz-M} is
replaced by \eqref{E:nabla-Marcinkiewicz-m} throughout.
\end{theorem}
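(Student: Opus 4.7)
The plan is to carry out the Ehrhard-symmetrization scheme that underlies Theorems~\ref{T:integral-form} and~\ref{T:norm-form}, adapted to the Marcinkiewicz-type hypothesis \eqref{E:nabla-Marcinkiewicz-M} or \eqref{E:nabla-Marcinkiewicz-m}. Since $\|\cdot\|_{m^B\RG}\le \|\cdot\|_{M^B\RG}$ by \eqref{AtoM}, condition \eqref{E:nabla-Marcinkiewicz-M} entails \eqref{E:nabla-Marcinkiewicz-m}; hence it suffices to prove the positive parts under the weaker assumption \eqref{E:nabla-Marcinkiewicz-m}, and to design the counterexamples so that they satisfy the stronger \eqref{E:nabla-Marcinkiewicz-M}, which automatically handles the final sentence of the theorem.

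For the positive direction, I would pass to a one-dimensional estimate for the signed rearrangement $u^\circ$ on $(0,1)$ by combining the P\'olya--Szeg\H o-type inequality established in Section~\ref{sec:sym} with the normalization $u^\circ(\tfrac12)=0$ granted by \eqref{mu}, arriving at
\begin{equation*}
	|u^\circ(s)|
		\le \int_s^{1/2} \frac{|\nabla u|^*(r)}{I(r)}\,\d r
		\quad\text{for $s\in(0,\tfrac12)$,}
\end{equation*}
where $I$ denotes the Gaussian isoperimetric function. Hypothesis \eqref{E:nabla-Marcinkiewicz-m} forces $|\nabla u|^*(r)\le B^{-1}(1/r)$, and plugging in the asymptotics $I(r)\sim r\sqrt{2\log(1/r)}$ and $B^{-1}(1/r)\sim\log^{1/\beta}(1/r)$ from Section~\ref{SE:back}, together with the substitution $L=\log(1/r)$, yields
\begin{equation*}
	|u^\circ(s)|
		\le \frac{1}{\kappab}\log^{\frac1\beta+\frac12}(1/s)
			+ \text{lower-order corrections}
		\quad\text{as $s\to 0^+$.}
\end{equation*}
Using the identity $\frac{2\beta}{\beta+2}\bigl(\frac1\beta+\frac12\bigr)=1$ and the rearrangement formula $\int_{\rn} F(|u|)\,\dgn = \int_0^1 F(u^*(s))\,\d s$, the exponential integral in \eqref{april1} is then dominated by $\int_0^{1/2}s^{-(\kappa/\kappab)^{2\beta/(2+\beta)}+o(1)}\,\d s$, which converges whenever $\kappa<\kappab$; this settles (1.i) and (2.i). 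The positive assertion in (2.ii) follows by tuning $B$ via the parameters $N$ and $t_0$ in \eqref{BN} so that the next-order term in the expansion of $B^{-1}$ shifts the $o(1)$ correction to a strictly negative value, forcing convergence at $\kappa=\kappab$.

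For the counterexamples in (1.ii) and (2.iii), I would construct a one-parameter family of one-dimensional models $u_\varepsilon(x) = v_\varepsilon(\Phi(x_1))$, where $v_\varepsilon$ is an antiderivative of $B^{-1}(1/s)/I(s)$ suitably truncated and normalized so that $\mm(u_\varepsilon)=0$ and $\|\nabla u_\varepsilon\|_{M^B\RG}\le 1$. By design $|u_\varepsilon^\circ(s)|$ saturates the leading growth $\log^{1/\beta+1/2}(1/s)/\kappab$, so any strict increase $\kappa>\kappab$ pushes the integrand of \eqref{april1} past the non-integrable threshold $s^{-1}$. When $\beta\in(0,2]$, the critical $\kappa=\kappab$ already produces an integrand asymptotic to $s^{-1}$ times a non-vanishing positive power of $\log(1/s)$ generated by the remainder term, hence non-integrable; this proves the case $\kappa=\kappab$ of (1.ii). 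The negative half of (2.ii) is obtained by choosing $B$ so that the subleading term in $B^{-1}$ tilts the correction in the opposite direction, restoring divergence.

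The main obstacle is the critical case (2.ii), which is decided not by the leading-order behaviour of $B^{-1}/I$ but by the first-order corrections; the dichotomy $\beta\le 2$ versus $\beta>2$ enters precisely because the exponent $\frac1\beta-\frac12$ of the subdominant factor $\log^{1/\beta-1/2}(1/r)/r$ in the above integrand changes sign at $\beta=2$, so that only for $\beta>2$ do the parameters in \eqref{BN} have enough room to swing the critical integral either way. A secondary technical point is to justify the P\'olya--Szeg\H o-type inequality in the Marcinkiewicz quasi-norm setting of \eqref{E:nabla-Marcinkiewicz-m}, which should follow directly from the pointwise control of $|\nabla u|^*$ by $B^{-1}(1/\cdot)$, without invoking a genuine triangle inequality for $\|\cdot\|_{m^B\RG}$.
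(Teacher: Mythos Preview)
Your overall strategy coincides with the paper's: reduce to a one–dimensional problem via Ehrhard symmetrization, feed in the pointwise control $|\nabla u|^*(r)\le B^{-1}(1/r)$ coming from \eqref{E:nabla-Marcinkiewicz-m}, and then run the asymptotic expansions of Section~\ref{SE:back}; for the negative assertions, build one–dimensional extremizers $u(x)=v(\Phi(x_1))$ that saturate the bound. Your identification of the threshold $\beta=2$ with the sign change of the exponent $\tfrac1\beta-\tfrac12$ in the subleading term is exactly the mechanism the paper exploits (compare Lemma~\ref{L:B-integral} with Lemma~\ref{L:B-integral-beta-large}).

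There is, however, a genuine gap in your displayed estimate
\[
	|u^\circ(s)|\le \int_s^{1/2}\frac{|\nabla u|^*(r)}{I(r)}\,\d r.
\]
The P\'olya--Szeg\H o principle of Proposition~\ref{P:L3.3} only yields the Hardy--Littlewood domination $\int_0^s(-u^{\circ\prime}I)^*\le\int_0^s|\nabla u|^*$, \emph{not} the pointwise inequality $-u^{\circ\prime}(r)I(r)\le|\nabla u|^*(r)$; hence you cannot simply insert $|\nabla u|^*$ into the integrand against the weight $1/I$. The paper addresses this in Proposition~\ref{P:symubound}: one first applies the Hardy--Littlewood inequality \eqref{HL} to rearrange the weight $\chi_{(s,1/2)}/I$, obtaining $\int_0^{1/2-s}|\nabla u|^*(r)/I(r+s)\,\d r$, and then splits this into the two–term bound
\[
	|u^\circ(s)|\le \frac{1}{I(s)}\int_0^s|\nabla u|^*(r)\,\d r
		+\int_s^{1/2}\frac{|\nabla u|^*(r)}{I(r)}\,\d r.
\]
The extra first term is precisely the first summand in the functional $\FF_{m^B}$ of \eqref{E:F-mB}. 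After the change of variables $s=\Phi(t)$, Lemma~\ref{L:asyJ1} shows that this additional piece behaves like $2^{-1/\beta}t^{2/\beta-1}$, which is of strictly lower order than the main term $2^{-1/\beta}\tfrac{\beta}{2+\beta}t^{2/\beta+1}$ coming from your integral (Lemma~\ref{L:B-integral}). So your asymptotic conclusions for (1.i), (2.i) and the positive half of (2.ii) survive, but only after this missing step is supplied; the paper packages all of this into Lemma~\ref{L:Holder-estimates}, and your argument should cite or reproduce that bound rather than the simpler one you wrote.
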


\begin{remark}
Theorem \ref{T:weak-form} shows one more diversity between  Gaussian and
Euclidean Moser type inequalities. Indeed, part (2.ii) tells us that the
threshold value $\kappa _\beta$ is admissible in inequality \eqref{april1}, at
least if $\beta >2$, under the gradient constraint of Marcinkiewicz type
\eqref{E:nabla-Marcinkiewicz-M} or \eqref{E:nabla-Marcinkiewicz-m}, for
suitable Young functions $B$ fulfilling condition \eqref{BN}. This is never the
case in the corresponding Euclidean results when Marcinkiewicz type norms of
the gradient are employed \citep{Alv:96,Alb:08}.
\end{remark}

\begin{remark}
In view of the inequalities in \eqref{AtoM}, the conclusions in the negative
direction contained in Theorem~\ref{T:norm-form}
imply those of Theorem~\ref{T:weak-form} about the
norm $\|\cdot\|_{M^B\RG}$, and the latter imply those about
 the quasi-norm $\|\cdot\|_{m^B\RG}$. Of course, reverse implications   hold
about the conclusions in the positive direction.
\end{remark}

\begin{remark}
Condition \eqref{BN} can be relaxed by requiring that there exist constants
$N_2>N_1>0$ and $t_0>0$ such that
\begin{equation} \label{E:weakerhp}
	N_1e^{t^\beta} \le B(t) \le N_2e^{t^\beta}
		\quad\text{for $t > t_0$.}
\end{equation}
Properly modified statements of Theorems \ref{T:norm-form} and
\ref{T:weak-form} hold under assumption \eqref{E:weakerhp}, with $N$ replaced by
$N_1$ in the assertions in the positive direction, and by $N_2$ in those in the
negative direction.
\end{remark}

Our last main result deals with a limiting version, as $\beta \to \infty$, of
inequality \eqref{april1} for functions subject to condition
\eqref{E:nabla-Orlicz}. The resulting inequality is
\begin{equation} \label{aprilinf}
		\int_{\rn} e^{\left(\kappa |u|
			\right)^{2}} \,\dgn
		\le C\,,
\end{equation}
under  conditions \eqref{mu} and
\begin{equation} \label{Linfinity}
	\|\nabla u\|_{L^\infty\RG}
		\le 1.
\end{equation}
The exponent $2$  is the largest admissible for $|u|$
in \eqref{aprilinf} under assumption \eqref{Linfinity}.  Also, the threshold
value of $\kappa$ in \eqref{aprilinf} is $\frac{1}{\sqrt{2}}$, namely
$\lim_{\beta\to\infty} \kappab$.

\begin{theorem}[$L^\infty$ norm]
\label{T:L-infty}
Let $n \ge 1$.
\begin{enumerate}
	\item If $0<\kappa<\frac{1}{\sqrt{2}}$, then there exists a constant
	$C=C(\kappa)$ such that inequality \eqref{aprilinf} holds for every function
	$u$ obeying \eqref{mu} and \eqref{Linfinity}

	\item If $\kappa\ge\frac{1}{\sqrt{2}}$, then there exists a function  $u$
	obeying \eqref{mu}  and \eqref{Linfinity}, that makes the integral in
	\eqref{aprilinf} diverge.
\end{enumerate}
\end{theorem}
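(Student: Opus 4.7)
The statement reduces, in both directions, to the classical relationship between Gaussian tails and $1$-Lipschitz functions: the hypothesis \eqref{Linfinity} says exactly that $u$ is $1$-Lipschitz on $\rn$ for the Euclidean distance, and the threshold $1/\sqrt{2}$ in \eqref{aprilinf} is read directly off the $e^{-t^{2}/2}$ decay of $\gamma_n$.

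\textbf{Positive direction $(\kappa<1/\sqrt{2})$.} My first step is to invoke the Borell--Sudakov--Tsirelson concentration inequality---a direct consequence of the Gaussian isoperimetric inequality recalled in Section~\ref{sec:sym}---which states that every $1$-Lipschitz $u$ on $\rn$ satisfies
\begin{equation*}
\gamma_n\bigl(\{|u-\mm(u)|>t\}\bigr) \le e^{-t^{2}/2} \quad\text{for $t>0$.}
\end{equation*}
In the mean-value version of \eqref{mu} one first uses the universal bound $|\mv(u)-\med(u)|\le\sqrt{\pi/2}$ for $1$-Lipschitz $u$, itself a one-line consequence of the same concentration estimate applied to $u-\med(u)$. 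Together with \eqref{mu} this gives $\gamma_n(\{|u|>t\})\le Ce^{-t^{2}/2}$ for a universal $C$, and a layer-cake integration then yields
\begin{equation*}
\int_{\rn} e^{\kappa^{2} u^{2}}\,\dgn
\le 1 + 2C\kappa^{2}\int_0^{\infty} t\,e^{(\kappa^{2}-1/2)t^{2}}\,dt,
\end{equation*}
whose right-hand side is finite precisely when $\kappa^{2}<1/2$, proving \eqref{aprilinf} with a constant depending only on $\kappa$.

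\textbf{Negative direction $(\kappa\ge 1/\sqrt{2})$.} For part (ii) I would simply exhibit the affine extremal $u(x)=x_1$. Then $|\nabla u|\equiv 1$ so that \eqref{Linfinity} holds, and $\mv(u)=\med(u)=0$ so that \eqref{mu} holds under either normalization. Since $\gamma_n$ is a product measure, Fubini gives
\begin{equation*}
\int_{\rn} e^{\kappa^{2} u^{2}}\,\dgn
= \frac{1}{\sqrt{2\pi}}\int_{\R} e^{(\kappa^{2}-1/2)s^{2}}\,ds,
\end{equation*}
which equals $(1-2\kappa^{2})^{-1/2}$ for $\kappa<1/\sqrt{2}$ and is $+\infty$ as soon as $\kappa\ge 1/\sqrt{2}$. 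This settles part (ii) and simultaneously confirms that the constant $1/\sqrt{2}$ in part (i) is sharp.

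\textbf{Main obstacle.} There is essentially no analytic difficulty here, which is consistent with this being the limiting ($\beta\to\infty$) borderline case of the results stated earlier: the sharp constant is transparent from the Gaussian density, and the extremal $u(x)=x_1$ is elementary. The only cosmetic point is the uniform treatment of mean and median, handled as indicated above. Alternatively, and in keeping with the proof strategy used for Theorems~\ref{T:integral-form}--\ref{T:weak-form}, one can route the argument through the Ehrhard symmetrization of Section~\ref{sec:sym}: it sends $u$ to a $1$-Lipschitz profile $\phi(x_1)$ on $\R$ with $\mm(\phi)=0$ with respect to $\gamma_1$, so that $|\phi(t)|\le |t|+C$ for a universal constant $C$, reducing \eqref{aprilinf} to the same one-dimensional Gaussian integral and giving the identical threshold.
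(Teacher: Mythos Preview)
Your proposal is correct. For part~(ii) you and the paper give the identical argument with the extremal $u(x)=x_1$. For part~(i), the paper does not cite Borell--Sudakov--Tsirel'son concentration as a black box; instead it runs the Ehrhard-symmetrization computation of Lemma~\ref{L:Holder-estimates-Linfty}, obtaining the pointwise bound $|u^\circ(s)|\le\Phi^{-1}(s)$ in the median case (and the explicit expression $\FF_{L^\infty}(t)=t-2\Phi'(t)-2t\Phi(t)$ in the mean-value case), and then integrates directly. The two routes are equivalent---both rest on the Gaussian isoperimetric inequality---and indeed your closing ``alternative'' is exactly the paper's argument; your primary route has the advantage of making the result visibly a case of classical Gaussian concentration, while the paper's keeps methodological uniformity with the proofs of Theorems~\ref{T:integral-form}--\ref{T:weak-form}. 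One cosmetic slip: in the mean-value case, shifting by $c=\sqrt{\pi/2}$ yields $\gamma_n(\{|u|>t\})\le e^{-(t-c)^2/2}$ for $t>c$, which is \emph{not} of the form $Ce^{-t^2/2}$ since the ratio $e^{ct}$ is unbounded. This is harmless, as the ensuing layer-cake integral $\int_0^\infty t\,e^{(\kappa^2-1/2)t^2+ct}\,dt$ still converges for $\kappa<1/\sqrt{2}$; alternatively, invoke the mean-value form of Gaussian concentration directly.
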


\section{Ehrhard  symmetrization and ensuing inequalities}\label{sec:sym}

Key tools in our approach are some rearrangement inequalities for the gradient
of Sobolev functions on Gauss space. These inequalities in their turn rely upon
the isoperimetric inequality that links the Gauss measure of a set $E\subset
\rn$ to its Gauss perimeter. Recall that the Gauss perimeter
$P_{\gamma_n}(E)$ of a measurable set $E$ can be defined as
\begin{equation*}
	P_{\gamma_n}(E) = \frac 1{(2\pi)^{\frac n2}}
		\int_{\partial^M E}e^{-\frac{|x|^2}2}\,\d \mathcal H^{n-1}(x),
\end{equation*}
where $\mathcal H^{n-1}$ denotes the $(n-1)$-dimensional Hausdorff
measure, and $\partial^M E$ the essential boundary of $E$ in the sense of
geometric measure theory, namely the set of points of $\rn$ at which the
density of $E$ is neither $0$ nor $1$.
The Gaussian isoperimetric inequality
asserts that half-spaces minimize Gauss perimeter among all measurable
subsets of $\rn$ with prescribed Gauss measure \citep{Bor:75, Sud:74}.  Note
that
\begin{equation*}
	 \gamma_n(\{x\in\rn: x_1\ge t\})= \Phi(t)
		\quad\text{for $t\in\R$,}
\end{equation*}
where $\Phi\colon\R\to(0,1)$ is the function defined as
\begin{equation} \label{E:Phi-def}
	\Phi(t) = \frac{1}{\sqrt{2\pi}} \int_{t}^{\infty} e^{-\frac{\tau^2}{2}}\,\d\tau
		\quad\text{for $t\in\R$.}
\end{equation}
Moreover,
\begin{equation*}
	P_{\gamma_n} (\{x\in\rn: x_1\ge t\})
		=  \frac{1}{\sqrt{2\pi}} e^{-\frac{t^2}{2}}
		\quad\text{for $t\in\R$.}
\end{equation*}
Here, $x_1$ denotes the first component of the point $x\in \rn$.
Thereby, on defining the function $I\colon [0,1]\to [0,\infty)$ as
\begin{equation*} 
	I(s) = \frac{1}{\sqrt{2\pi}} e^{-\frac{\Phi^{-1}(s)^2}{2}}
		\quad\text{ for $s\in(0,1)$,}
\end{equation*}
and $I(0)=I(1)=0$,
the Gaussian isoperimetric inequality takes the analytic form
\begin{equation*}
	I(\gamma _n(E)) \le P_{\gamma_n}(E)
\end{equation*}
for every measurable set $E\subset\rn$. The function $I$ is accordingly called
the isoperimetric function (or isoperimetric profile) of Gauss space. Note that
it is symmetric about $\tfrac 12$, namely
\begin{equation} \label{symmetry}
	I(s) = I(1-s)
		\quad\text{for $s\in [0,1]$.}
\end{equation}
Also,
\begin{equation} \label{june30}
 - \Phi'(t) = 	I(\Phi(t))
		\quad\text{for $t \in \mathbb R$.}
\end{equation}

An Ehrhard  symmetral of a function $u\in\MM\RG$ is a function,
equimeasurable with $u$, whose level sets are half-spaces. Thus, the function
$u^\bullet\colon\rn\to\R$ defined as
\begin{equation*}
	u^\bullet (x) = u^\circ (\Phi (x_1))
		\quad\text{for $x\in\rn$,}
\end{equation*}
is an Ehrhard  symmetral of $u$.

The following result is established in  \citep[Lemma~3.3]{Cia:09}, and is the
point of departure in the proof  of fundamental properties of $u^\bullet$.

\begin{proposition} \label{P:L3.3}
Assume that  $u \in W^{1,1}\RG$.
Then  the function $u^\circ$ is locally
absolutely continuous in $(0,1)$, the function $u^\bullet \in W^{1,1}\RG$, and
\begin{equation} \label{PS}
	\int_{0}^s (-u^\circ{}'I)^*(r)\,\d r
		= \int_{0}^s |\nabla u^\bullet|^*(r)\,\d r
		\le \int_{0}^s |\nabla u|^*(r)\,\d r
		\quad\text{for $s\in[0,1]$}.
\end{equation}
\end{proposition}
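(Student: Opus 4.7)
My plan is to proceed in three steps: the local absolute continuity of $u^\circ$ paired with an explicit formula for $|\nabla u^\bullet|$, the equality of rearrangements, and the gradient comparison via the Gaussian isoperimetric profile. Let $\mu(t)=\gamma_n(\{u>t\})$, so that $u^\circ$ is essentially the right-continuous inverse of $\mu$. The coarea formula in Gauss space gives
\begin{equation*}
	-\mu'(t)=\int_{\{u=t\}}\frac{1}{|\nabla u|}\,\d P_{\gamma_n}\quad\text{for a.e.\ $t$,}
\end{equation*}
while the Gaussian isoperimetric inequality forces $P_{\gamma_n}(\{u>t\})\ge I(\mu(t))>0$ whenever $\mu(t)\in(0,1)$. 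Together, these facts prevent $\mu$ from having plateaus on the interior of its range and deliver the local absolute continuity of $u^\circ$ on $(0,1)$. The chain rule applied to $u^\bullet(x)=u^\circ(\Phi(x_1))$, together with the identity $-\Phi'=I\circ\Phi$ from \eqref{june30} and the sign $u^\circ{}'\le 0$, then yields
\begin{equation*}
	|\nabla u^\bullet(x)|=(-u^\circ{}')(\Phi(x_1))\,I(\Phi(x_1)).
\end{equation*}

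The equality of the two leftmost integrals in \eqref{PS} is immediate from this. The map $x_1\mapsto \Phi(x_1)$ pushes the one-dimensional standard Gauss measure forward to Lebesgue measure on $(0,1)$, and since $\gamma_n$ factors as this measure tensored with $\gamma_{n-1}$ while $|\nabla u^\bullet|$ depends on $x$ only through $\Phi(x_1)$, the function $|\nabla u^\bullet|$ on $\RG$ is equimeasurable with $-u^\circ{}'\cdot I$ on $((0,1),\mathrm{Leb})$. Their decreasing rearrangements therefore coincide on $(0,1)$; once the inequality in \eqref{PS} is established, this same observation also yields $u^\bullet\in W^{1,1}\RG$.

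For the inequality, pick $s\in(0,1)$ and set $E_s=\{u>u^\circ(s)\}$, so that $\gamma_n(E_s)=s$. The coarea formula and the Gaussian isoperimetric inequality give
\begin{equation*}
	\int_{E_s}|\nabla u|\,\dgn=\int_{u^\circ(s)}^{\infty} P_{\gamma_n}(\{u>\tau\})\,\d\tau \ge \int_{u^\circ(s)}^{\infty} I(\mu(\tau))\,\d\tau,
\end{equation*}
and the substitution $r=\mu(\tau)$ rewrites the right-hand side as $\int_0^s (-u^\circ{}')(r)\,I(r)\,\d r$. The Hardy--Littlewood inequality \eqref{HL} bounds the left-hand side by $\int_0^s|\nabla u|^*(r)\,\d r$, producing the already-integrated estimate $\int_0^s (-u^\circ{}'\,I)(r)\,\d r\le \int_0^s |\nabla u|^*(r)\,\d r$.

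The main obstacle is that \eqref{PS} requires the decreasing rearrangement $(-u^\circ{}'\,I)^*$ on the left, whereas $-u^\circ{}'\cdot I$ itself need not be monotone. I would bridge this gap by applying the same coarea-plus-isoperimetric scheme to the two-sided truncations $\min(\max(u,t_1),t_2)$, whose gradients coincide with $\nabla u$ on the band $\{t_1<u<t_2\}$ and vanish elsewhere. This delivers
\begin{equation*}
	\int_{\mu(t_2)}^{\mu(t_1)} (-u^\circ{}')(r)\,I(r)\,\d r \le \int_{\{t_1<u<t_2\}} |\nabla u|\,\dgn
\end{equation*}
for arbitrary $t_1<t_2$. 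Summing over finitely many disjoint bands so that the corresponding intervals $(\mu(t_2),\mu(t_1))$ approximate an arbitrary superlevel set of $-u^\circ{}'\cdot I$, which is possible because the strict monotonicity of $\mu$ places every open subinterval of $(0,1)$ in this form, and invoking Hardy's lemma \eqref{Hardylemma} to pass to initial-segment integrals, upgrades the previous estimate to the rearranged form displayed in \eqref{PS}.
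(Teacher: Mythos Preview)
The paper does not give a proof of this proposition; it is quoted verbatim from \cite[Lemma~3.3]{Cia:09}. Your outline follows the standard route---coarea formula combined with the Gaussian isoperimetric inequality---which is indeed how the result is established in that reference, so in spirit your approach matches.

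That said, two steps need more care than you have given them. First, the claim $\gamma_n(E_s)=s$ for $E_s=\{u>u^\circ(s)\}$ is false whenever the level set $\{u=u^\circ(s)\}$ has positive measure; one only has $\gamma_n(\{u>u^\circ(s)\})\le s\le\gamma_n(\{u\ge u^\circ(s)\})$. This is easily repaired by enlarging $E_s$ inside the level set, since $\nabla u=0$ a.e.\ there, but it should be said. Second, the final ``upgrade'' paragraph is where the real content lies, and it is sketched too briskly. What you need is that for every measurable $G\subset(0,1)$ there exists $F\subset\rn$ with $\gamma_n(F)\le|G|$ and $\int_G(-u^\circ{}'I)\,\d r\le\int_F|\nabla u|\,\dgn$; taking $G$ to be a superlevel set of $-u^\circ{}'I$ and applying \eqref{HL} then yields \eqref{PS}. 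Your truncation scheme does deliver this for a single interval and, with disjoint bands, for finite unions, but matching the measure of $\{t_1<u<t_2\}$ to the length $\mu(t_2)-\mu(t_1)$ (and guaranteeing that ``every open subinterval of $(0,1)$'' arises this way) requires attention precisely when $\mu$ has jumps or plateaus, and the passage from finite unions of intervals to arbitrary superlevel sets needs a density or approximation argument you have not supplied. None of this is fatal---the strategy is correct---but as written it is an outline rather than a complete proof.
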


A Gaussian P\'olya-Szeg\H o principle on the non-increase of Lebesgue
\cite{Ehr:84}, and more generally Orlicz \cite{Cia:09}, gradient norms under
Ehrhard  symmetrization, can immediately be derived from
Proposition~\ref{P:L3.3}, via Hardy's lemma \eqref{Hardylemma}.

\begin{proposition} \label{P:PSnorm}
Let $A$ be a Young function.  Assume that  $u \in W^{1,A}\RG$.
Then $u^\bullet \in W^{1,A}\RG$, and
\begin{equation}\label{PSnorm1}
	\lVert-u^\circ{}'I  \rVert_{L^A(0,1)}= \|\nabla u^\bullet\|_{L^A\RG}
		\le \|\nabla u\|_{L^A\RG}.
\end{equation}
\end{proposition}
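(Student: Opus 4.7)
The plan is to split the claim into three pieces: (a) the identity $\|-u^\circ{}'I\|_{L^A(0,1)} = \|\nabla u^\bullet\|_{L^A\RG}$; (b) the P\'olya--Szeg\H o inequality $\|\nabla u^\bullet\|_{L^A\RG}\le\|\nabla u\|_{L^A\RG}$; and (c) the membership $u^\bullet\in W^{1}L^A\RG$, which will follow automatically from (b) once weak differentiability of $u^\bullet$ has been secured by Proposition~\ref{P:L3.3}. For (a), I first compute $\nabla u^\bullet$ via the chain rule: since $u^\bullet(x)=u^\circ(\Phi(x_1))$ depends only on $x_1$ and $u^\circ$ is non-increasing, identity \eqref{june30} gives $|\nabla u^\bullet(x)| = -u^\circ{}'(\Phi(x_1))\,I(\Phi(x_1))$ for a.e.\ $x\in\rn$. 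Because $\Phi\colon\R\to(0,1)$ pushes the one-dimensional standard Gauss measure on $\R$ forward to the Lebesgue measure on $(0,1)$, the $\gamma_n$-distribution function of $|\nabla u^\bullet|$ agrees with the Lebesgue distribution function of $s\mapsto -u^\circ{}'(s)\,I(s)$ on $(0,1)$. Hence these two functions are equimeasurable, their decreasing rearrangements coincide, and the rearrangement invariance of the Luxemburg norm then yields (a), re-deriving in passing the first equality of \eqref{PS}.

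For (b), I would exploit the Hardy--Littlewood--P\'olya majorization
\[
\int_0^s |\nabla u^\bullet|^*(r)\,\d r\le\int_0^s |\nabla u|^*(r)\,\d r \quad\text{for every } s\in(0,1)
\]
supplied by \eqref{PS}. Setting $\lambda=\|\nabla u\|_{L^A\RG}$, the definition of the Luxemburg norm gives $\int_0^1 A(|\nabla u|^*(r)/\lambda)\,\d r\le 1$, so it suffices to prove $\int_0^1 A(|\nabla u^\bullet|^*(r)/\lambda)\,\d r\le 1$. The whole task reduces to the following one-dimensional principle: whenever $f,g\colon(0,1)\to[0,\infty)$ are non-increasing and satisfy $\int_0^s f\le\int_0^s g$ for every $s\in(0,1)$, and $A$ is any Young function, then $\int_0^1 A(f)\le\int_0^1 A(g)$. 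I would prove this by decomposing $A(x)=\int_0^\infty (x-t)_+\,\d\nu(t)$ for a non-negative Radon measure $\nu$ on $[0,\infty)$ arising from the non-decreasing left-derivative $a$ of $A$, thereby reducing the task to showing $\int_0^1 (f-t)_+\,\d r\le\int_0^1(g-t)_+\,\d r$ for every $t\ge 0$. Since $f$ and $g$ are non-increasing, these integrals rewrite, respectively, as $\int_0^{\tau}f\,\d r-t\tau$ evaluated at $\tau=|\{f>t\}|$ and as the analogous expression for $g$ at $\tau=|\{g>t\}|$. A single application of Hardy's lemma \eqref{Hardylemma} with the non-increasing cut-off $\chi_{(0,|\{f>t\}|)}$ handles the first half, and the conclusion is sealed by the elementary observation that the concave function $\tau\mapsto\int_0^\tau g\,\d r-t\tau$ attains its maximum precisely at $\tau=|\{g>t\}|$.

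The main obstacle is this one-dimensional reduction: translating a pointwise bound on running integrals of the rearrangements into a genuine $L^A$-norm inequality essentially requires the convex structure of $A$, exploited here through its decomposition into positive parts. The chain-rule computation underlying (a), together with the push-forward property of $\Phi$, is by contrast a routine consequence of \eqref{june30} and of the definition of $u^\bullet$, while (c) is a cost-free by-product of (b) combined with Proposition~\ref{P:L3.3}.
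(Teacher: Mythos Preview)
Your proposal is correct and matches the paper's approach, which merely asserts that the result follows from Proposition~\ref{P:L3.3} via Hardy's lemma~\eqref{Hardylemma}; you have supplied the missing details. As a minor remark, your passage from the majorization to $\int_0^1 A(f)\le\int_0^1 A(g)$ via the decomposition $A(x)=\int_{[0,\infty)}(x-t)_+\,\d\nu(t)$ can be shortened to a single invocation of~\eqref{Hardylemma} by using the subgradient inequality $A(g)-A(f)\ge a(f)(g-f)$ and taking the non-increasing weight $\zeta=a\circ f$.
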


The next Proposition can serve as a replacement for the P\'olya-Szeg\H o
inequality \eqref{PSnorm1} in dealing with certain functionals that depend on
the gradient, but are not norms.

\begin{proposition} \label{P:symubound}
Assume that the function  $u \in W^{1,1}\RG$ satisfies $\med (u)=0$.
Then
\begin{equation} \label{eq:symubound1}
	0\le u^\circ(s) \le
		\frac{1}{I(s)}\int_{0}^{s} |\nabla u|^*(r)\,\d r
		+ \int_{s}^{\frac{1}{2}} \frac{|\nabla u|^*(r)}{I(r)}\,\d r
		\quad\text{for $s\in(0,\tfrac12]$}
\end{equation}
and
\begin{equation} \label{eq:symubound2}
	0\le -u^\circ(1-s) \le
		\frac{1}{I(s)}\int_{0}^{s} |\nabla u|^*(r)\,\d r
		+ \int_{s}^{\frac{1}{2}} \frac{|\nabla u|^*(r)}{I(r)}\,\d r
		\quad\text{for $s\in(0,\tfrac12]$}.
\end{equation}
\end{proposition}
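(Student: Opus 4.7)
The lower bounds follow at once from Proposition~\ref{P:L3.3}: since $u^\circ$ is non-increasing and locally absolutely continuous on $(0,1)$, and $u^\circ(\tfrac12)=\med(u)=0$, we have $u^\circ(s)\ge 0$ and $-u^\circ(1-s)\ge 0$ for every $s\in(0,\tfrac12]$. Moreover, \eqref{eq:symubound2} will be deduced from \eqref{eq:symubound1} by applying the latter to $-u$, using $\med(-u)=0$, $|\nabla(-u)|=|\nabla u|$, and the identity $(-u)^\circ(s)=-u^\circ(1-s)$ on a non-atomic probability space. Everything therefore reduces to proving the upper bound in \eqref{eq:symubound1}.

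Set $g:=-u^{\circ\prime}\ge 0$, $\Psi(r):=\int_0^r|\nabla u|^*(\rho)\,\d\rho$, and $G(r):=\int_0^r g(\rho)\,I(\rho)\,\d\rho$. Proposition~\ref{P:L3.3} combined with the Hardy--Littlewood bound $\int_0^r h(\rho)\,\d\rho\le\int_0^r h^*(\rho)\,\d\rho$ (a special case of \eqref{HL}) yields the pointwise comparison $G(r)\le\Psi(r)$ for every $r\in[0,1]$, which is the only rearrangement ingredient needed below.

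The main step is then to integrate by parts on $[s,\tfrac12]$---legitimate because $I$ is smooth and bounded away from zero there, $I$ being strictly increasing on $(0,\tfrac12]$ with $I'\ge 0$---starting from the factorisation $g=(gI)\cdot(1/I)$ and the identity $u^\circ(\tfrac12)=0$:
\[
u^\circ(s)=\int_s^{1/2}\!\frac{g(r)I(r)}{I(r)}\,\d r=\frac{G(1/2)}{I(1/2)}-\frac{G(s)}{I(s)}+\int_s^{1/2}\!G(r)\,\frac{I'(r)}{I(r)^2}\,\d r.
\]
Dropping the non-positive term $-G(s)/I(s)$ and substituting $G\le\Psi$ in the two non-negative summands gives
\[
u^\circ(s)\le\frac{\Psi(1/2)}{I(1/2)}+\int_s^{1/2}\Psi(r)\,\frac{I'(r)}{I(r)^2}\,\d r.
\]
Running the same integration by parts in reverse on $\int_s^{1/2}|\nabla u|^*(r)/I(r)\,\d r=\int_s^{1/2}\Psi'(r)/I(r)\,\d r$ then shows that the right-hand side of the last display is algebraically equal to $\Psi(s)/I(s)+\int_s^{1/2}|\nabla u|^*(r)/I(r)\,\d r$, which is exactly the claimed bound.

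I expect the delicate point to be precisely this algebraic cancellation of the boundary value $\Psi(1/2)/I(1/2)$ between the forward and backward integrations by parts: it is this cancellation, and not any analytic inequality, that forces the specific form of the sharp first summand $\Psi(s)/I(s)$ on the right-hand side of \eqref{eq:symubound1} and explains why the more naive estimate obtained by replacing $1/I(r)$ with $1/I(s)$ for $r\ge s$ would be too crude.
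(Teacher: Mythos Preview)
Your argument is correct, and it takes a genuinely different route from the paper's.

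The paper's proof writes $u^\circ(s)=\int_s^{1/2}(-u^{\circ\prime}I)(r)\,\frac{\d r}{I(r)}$, applies the Hardy--Littlewood inequality to pass to $\int_0^1\bigl(\chi_{(s,1/2)}/I\bigr)^*(r)\,(-u^{\circ\prime}I)^*(r)\,\d r$, then uses Hardy's lemma with \eqref{PS} to replace $(-u^{\circ\prime}I)^*$ by $|\nabla u|^*$. This gives the intermediate bound $u^\circ(s)\le\int_0^{1/2-s}|\nabla u|^*(r)/I(r+s)\,\d r$, after the explicit rearrangement $\bigl(\chi_{(s,1/2)}/I\bigr)^*(r)=\chi_{(0,1/2-s)}(r)/I(r+s)$ is computed. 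The two-term form in \eqref{eq:symubound1} is then obtained by an ad hoc splitting at $r=s$ together with an auxiliary monotone extension $\hat I$ of $I$ past $\tfrac12$. Inequality \eqref{eq:symubound2} is proved by a parallel computation using the symmetry $I(1-s)=I(s)$.

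Your approach bypasses both the explicit rearrangement of $\chi_{(s,1/2)}/I$ and the $\hat I$ device: the single integration by parts, together with the pointwise domination $G\le\Psi$ (which is nothing more than \eqref{PS} plus the trivial bound $\int_0^r h\le\int_0^r h^*$), delivers the two-term right-hand side directly, the boundary contribution $\Psi(1/2)/I(1/2)$ cancelling exactly against the reverse integration by parts. This is cleaner and makes the structure of the bound more transparent; the paper's route, on the other hand, produces along the way the slightly sharper intermediate inequality $u^\circ(s)\le\int_0^{1/2-s}|\nabla u|^*(r)/I(r+s)\,\d r$, which is then deliberately relaxed.

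One small remark on your reduction of \eqref{eq:symubound2} to \eqref{eq:symubound1}: the identity $(-u)^\circ(s)=-u^\circ(1-s)$ is not a consequence of non-atomicity alone (it can fail at endpoints of flat pieces of $u^\circ$); what makes it valid here is the \emph{continuity} of $u^\circ$, which you have from Proposition~\ref{P:L3.3}. With that in hand, both $(-u)^\circ$ and $s\mapsto -u^\circ(1-s)$ are right-continuous non-increasing functions equimeasurable with $-u$, hence equal, and in particular $\med(-u)=0$. Alternatively, your integration-by-parts argument applies verbatim to $-u^\circ(1-s)=\int_s^{1/2}g(1-\rho)\,\d\rho$ after the change of variable $\rho=1-r$ and the symmetry $I(1-\rho)=I(\rho)$, with $\tilde G(\rho)=\int_{1-\rho}^1(-u^{\circ\prime}I)$ satisfying the same bound $\tilde G\le\Psi$.
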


\begin{proof}
Proposition~\ref{P:L3.3}, combined
with Hardy's lemma \eqref{Hardylemma}, implies that
\begin{equation} \label{eq:aug1}
	\int_{0}^s (-u^\circ{}'I)^*(r)\zeta(r)\,\d r
		\le \int_{0}^s |\nabla u|^*(r)\zeta(r)\,\d r
		\quad\text{for $s\in(0,1)$,}
\end{equation}
for any non-increasing function $\zeta\colon(0,1)\to[0,\infty)$.  Since we are
assuming that $\med (u)= u^\circ(\tfrac12)=0$, we have that $u^\circ(s)\ge 0$
for $s\in(0,\tfrac12]$, and
\begin{align}\label{july1}
	\begin{split}
	u^\circ(s)
		& = \int_{s}^{\frac12} -u^\circ{}'(r)\,\d r
		 = \int_{0}^1 \frac{\chi_{(s,\frac12)}(r)}{I(r)}
			\left( -u^\circ{}'(r)I(r) \right)\,\d r
			\\
		& \le \int_{0}^1 \biggl(\frac{\chi_{(s,\frac12)}}{I}\biggr)^*\!(r)
			\left( -u^\circ{}'I \right)^*\!(r)\,\d r
		 \le \int_{0}^1 \biggl(\frac{\chi_{(s,\frac12)}}{I}\biggr)^*\!(r)
			|\nabla u|^*(r)\,\d r
			\quad\text{for $s\in(0,\tfrac12]$,}
	\end{split}
\end{align}
where the first inequality follows from Hardy-Littlewood inequality \eqref{HL}
and the second one is due to \eqref{eq:aug1}.  Furthermore, inasmuch as $I$ is
increasing in $(0,\tfrac12]$, if $s\in(0,\tfrac12]$, then
\begin{equation}\label{july2}
	\biggl(\frac{\chi_{(s,\frac12)}}{I}\biggr)^*\!(r)
		=\frac{\chi_{(0,\frac12-s)}(r)}{I(r+s)}
		\quad\text{for $r\in [0, 1]$.}
\end{equation}
On the right-hand side of equality \eqref{july2}, and in similar equalities
below, there is a slight abuse of notation, since $I$ is only defined in $[0,
1]$.
However, this is immaterial, since $\chi_{(0, \frac 12-s)}(r)=0$ if $r+s>\frac 12$.
From equations \eqref{july1} and \eqref{july2}, one deduces that
\begin{equation} \label{eq:aug2}
	0\le u^\circ(s) \le \int_{0}^{\frac12-s} \frac{|\nabla u|^*(r)}{I(r+s)}\,\d r
		\quad\text{for $s\in(0,\tfrac12]$}.
\end{equation}
Similarly, $u^\circ (s) \leq 0$  for $s \in [\tfrac12,1)$ and
\begin{equation*}
	-u^\circ(1-s)
		= \int_{\frac12}^{1-s} -u^\circ{}'(r)\,\d r
		\le \int_{0}^1 \biggl(\frac{\chi_{(\frac12,1-s)}}{I}\biggr)^*\!(r)
			|\nabla u|^*(r)\,\d r
			\quad\text{for $s\in(0,\tfrac12]$.}
\end{equation*}
Also, owing to  equation \eqref{symmetry} and to the monotonicity of $I$ on
$(0,\tfrac12]$, if $s\in(0,\tfrac12]$, then
\begin{equation*}
	\biggl(\frac{\chi_{(\frac12,1-s)}}{I}\biggr)^*\!(r)
		= \biggl(\frac{\chi_{(s,\frac12)}}{I}\biggr)^*\!(r)
		= \frac{\chi_{(0,\frac12-s)}(r)}{I(r+s)}
		\quad\text{for $r\in[0, 1]$.}
\end{equation*}
Hence,
\begin{equation} \label{eq:aug3}
	0\le -u^\circ(1-s) \le \int_{0}^{\frac12-s} \frac{|\nabla u|^*(r)}{I(r+s)}\,\d r
		\quad\text{for $s\in(0,\tfrac12]$}.
\end{equation}
Now, define the function $\hat I\colon[0,1]\to[0,\infty)$ as
\begin{equation*}
	\hat I(s)
		= \begin{cases}
			I(s) & \text{for $s\in[0,\frac12]$}
				\\
			I(\frac12) & \text{for $s\in(\frac12,1]$}.
		\end{cases}
\end{equation*}
Then,
\begin{align} \label{eq:aug4}
	\begin{split}
	\int_{0}^{\frac12-s} \frac{|\nabla u|^*(r)}{I(r+s)}\,\d r
		& \le \int_{0}^{\frac12} \frac{|\nabla u|^*(r)}{\hat I(r+s)}\,\d r
			 = \int_{0}^{s} \frac{|\nabla u|^*(r)}{\hat I(r+s)}\,\d r
			+ \int_{s}^{\frac12} \frac{|\nabla u|^*(r)}{\hat I(r+s)}\,\d r
			\\
		& \le \frac{1}{\hat I(s)} \int_{0}^{s} |\nabla u|^*(r)\,\d r
			+ \int_{s}^{\frac12} \frac{|\nabla u|^*(r)}{\hat I(r)}\,\d r
			\\
		& \le \frac{1}{I(s)} \int_{0}^{s} |\nabla u|^*(r)\,\d r
			+ \int_{s}^{\frac12} \frac{|\nabla u|^*(r)}{I(r)}\,\d r
			\quad\text{for $s\in(0,\tfrac12]$,}
	\end{split}
\end{align}
where the first and the last inequalities hold since $I=\hat I$
on $(0,\tfrac12]$, and the second is due to the monotonicity of $\hat I$.
Inequality \eqref{eq:symubound1} now follows from \eqref{eq:aug2} and
\eqref{eq:aug4}, and inequality \eqref{eq:symubound2} from \eqref{eq:aug3} and
\eqref{eq:aug4}.
\end{proof}

A sharp estimate for the difference between the median and the mean value of
any Sobolev function in terms of the $L^1\RG$ norm of its gradient is the
subject of the following proposition.

\begin{proposition} \label{P:med-mv}
Let $u \in W^{1,1}\RG$. Then
\begin{equation} \label{E:med-mv}
	|\med (u) - \mv (u)|
		\le \sqrt{\frac{\pi}{2}}\, \|\nabla u\|_{L^1\RG}.
\end{equation}
The constant $\sqrt{\frac{\pi}{2}}$ in inequality \eqref{E:med-mv} is sharp.
\end{proposition}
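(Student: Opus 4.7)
The plan is to reduce to the case $\med(u)=0$, rewrite $\mv(u)$ as a weighted integral of $-u^{\circ\prime}$, and then compare the weight against the isoperimetric profile $I$ so that Proposition~\ref{P:PSnorm} applies. Since $\nabla(u-\med(u))=\nabla u$ and both $\mv$ and $\med$ are translation-equivariant, I may assume $\med(u)=u^\circ(\tfrac12)=0$. By equimeasurability $\mv(u)=\int_0^1 u^\circ(s)\,\d s$. Using $u^\circ(\tfrac12)=0$ and Fubini on $u^\circ(s)=-\int_s^{1/2} u^{\circ\prime}(r)\,\d r$ (for $s\le\tfrac12$) together with the analogous identity for $s\ge\tfrac12$, I get
\begin{equation*}
  \mv(u) = \int_0^{1/2} r\,(-u^{\circ\prime}(r))\,\d r - \int_{1/2}^1 (1-r)\,(-u^{\circ\prime}(r))\,\d r,
\end{equation*}
whence, by the triangle inequality and the fact that $-u^{\circ\prime}\ge 0$,
\begin{equation*}
  |\mv(u)-\med(u)| = |\mv(u)| \le \int_0^1 \min(r,1-r)\,(-u^{\circ\prime}(r))\,\d r.
\end{equation*}

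The key analytic input is the pointwise estimate
\begin{equation*}
  \min(r,1-r) \le \sqrt{\tfrac{\pi}{2}}\,I(r) \qquad\text{for every $r\in(0,1)$,}
\end{equation*}
with equality precisely at $r=\tfrac12$. By the symmetry $I(r)=I(1-r)$, this reduces to $r\le\sqrt{\pi/2}\,I(r)$ on $(0,\tfrac12]$. Substituting $r=\Phi(t)$ with $t\ge 0$, this is equivalent to $\Phi(t)\le \tfrac12 e^{-t^2/2}$. I verify this by analyzing $h(t)=\tfrac12 e^{-t^2/2}-\Phi(t)$: one checks $h(0)=0$, $h(+\infty)=0$, and that $h'$ has a single zero on $(0,\infty)$ at $t=\sqrt{2/\pi}$, with $h$ first increasing and then decreasing, so $h\ge 0$ throughout. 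Once the pointwise bound is in hand, Proposition~\ref{P:PSnorm} with $A(t)=t$ gives
\begin{equation*}
  |\mv(u)-\med(u)| \le \sqrt{\tfrac{\pi}{2}}\int_0^1 I(r)\,(-u^{\circ\prime}(r))\,\d r = \sqrt{\tfrac{\pi}{2}}\,\|{-}u^{\circ\prime}I\|_{L^1(0,1)} \le \sqrt{\tfrac{\pi}{2}}\,\|\nabla u\|_{L^1\RG}.
\end{equation*}

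For sharpness, the inequality $\min(r,1-r)\le\sqrt{\pi/2}\,I(r)$ is saturated only at $r=\tfrac12$, so extremizing sequences should concentrate $-u^{\circ\prime}$ at $s=\tfrac12$. I take $u_\varepsilon$ equal to its own Ehrhard symmetral (so that equality holds in Proposition~\ref{P:PSnorm}) with $u_\varepsilon^\circ$ equal to $1$ on $(0,\tfrac12-\varepsilon)$, linearly decreasing to $0$ on $(\tfrac12-\varepsilon,\tfrac12)$, and zero on $(\tfrac12,1)$. Then $\med(u_\varepsilon)=0$, $\mv(u_\varepsilon)\to\tfrac12$, and $\|\nabla u_\varepsilon\|_{L^1\RG}=\int(-u_\varepsilon^{\circ\prime})I\to I(\tfrac12)=1/\sqrt{2\pi}$ as $\varepsilon\to 0$, so the ratio tends to $\sqrt{\pi/2}$.

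I expect the main obstacle to be the pointwise inequality $\min(r,1-r)\le\sqrt{\pi/2}\,I(r)$: although the reformulation $\Phi(t)\le \tfrac12 e^{-t^2/2}$ is elementary, it sits slightly off the standard Mills-ratio bounds and requires the explicit monotonicity analysis of $h(t)$ above to pin down that the supremum of $\min(r,1-r)/I(r)$ is attained exactly at the median point, which is both what yields the correct constant $\sqrt{\pi/2}$ and what drives the choice of extremizers.
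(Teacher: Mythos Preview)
Your proof is correct and follows essentially the same route as the paper: both express $\med(u)-\mv(u)$ as a weighted integral of $-u^{\circ\prime}$, reduce the bound to the pointwise inequality $\min(r,1-r)\le\sqrt{\pi/2}\,I(r)$, and then invoke the P\'olya--Szeg\H{o} inequality in $L^1$. The only minor difference is in verifying that pointwise bound---you analyze $h(t)=\tfrac12 e^{-t^2/2}-\Phi(t)$ directly, whereas the paper proves the stronger statement that $s/I(s)$ is increasing on $(0,\tfrac12)$ via a Mills-ratio estimate---and your sharpness sequence is essentially the paper's $u_k$ rewritten on the rearrangement side.
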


\begin{proof}
Owing to Proposition~\ref{P:L3.3},  the function $u^\circ$ is locally absolutely
continuous in $(0,1)$. Hence, by Fubini's theorem,
\begin{align} \label{jun1}
	\begin{split}
	u^\circ(s) - \mv (u)
		& = u^\circ(s) - \int_{0}^1 u^\circ(r)\,\d r
			= \int_{0}^1 \bigl( u^\circ(s) - u^\circ(r)\bigr)\,\d r
			= \int_{0}^1 \int_r^s u^\circ{}'(\varrho)\,\d \varrho\,\d r
			 \\
		&  = \int_{0}^s r u^\circ{}'(r)\,\d r
				- \int_{s}^1 (1-r)u^\circ{}'(r)\,\d r
		 = \int_{0}^1 \bigl( \chi_{(s,1)}(r) - r\bigr)\bigl(-u^\circ{}'(r)\bigr)\,\d r
	 \end{split}
\end{align}
for $s\in(0,1)$. Therefore
\begin{align} \label{E:med-mv-1}
	\begin{split}
	|\med (u) - \mv (u)|
		 = | u^\circ(\tfrac12) - \mv (u)|
		& = \biggl| \int_{0}^1 \frac{\chi_{(\frac12,1)}(s)-s}{I(s)}
					\bigl(-u^\circ{}'(s)I(s)\bigr)\,\d s \biggr|
			\\
		& \le \sup_{s\in(0,1)} \frac{|\chi_{(\frac12,1)}(s)-s|}{I(s)}
					\int_{0}^1 \bigl(-u^\circ{}'(s)I(s)\bigr)\,\d s.
	\end{split}
\end{align}
Now, notice that the function $s\mapsto s/I(s)$ is increasing on $(0, \tfrac
12)$.  Indeed, this  is equivalent to the fact that the function $t \mapsto
\Phi (t) e^{\frac {t^2}2}$ is decreasing on $(0,\infty)$, a property that can
be easily verified via differentiation and by the inequality
\begin{equation*}
	\int_t^\infty e^{-\frac{\tau^2}2}\,\d\tau
		\le \frac{e^{-\frac {t^2}2}}t
			\quad\text{for $t>0$,}
\end{equation*}
which is shown e.g.\ in \citep[Lemma~3.4]{Cia:11}.  From the monotonicity of
$s/I(s)$ in $(0, \tfrac 12)$ and property \eqref{symmetry} we have that
\begin{align}
\label{E:med-mv-2}
	\sup_{s\in(0,1)} \frac{|\chi_{(\frac12,1)}(s)-s|}{I(s)}
					 = \max\biggl\{
					\sup_{s\in(0,\frac12)} \frac{s}{I(s)},
					\sup_{s\in(\frac12,1)} \frac{1-s}{I(1-s)}
				\biggr\}
		 = \sup_{s\in(0,\frac12)} \frac{s}{I(s)}
			= \frac{1}{2I(\frac12)} = \sqrt{\frac{\pi}{2}}.
\end{align}
Furthermore, by Proposition~\ref{P:L3.3},
\begin{equation}
\label{E:med-mv-3}
	\int_{0}^1 \bigl(-u^\circ{}'(s)I(s)\bigr)\,\d s
	 = \int_{0}^1 (-u^\circ{}'I)^*(s)\,\d s
	 \le \int_{0}^1 |\nabla u|^*(s)\,\d s
	 = \|\nabla u\|_{L^1\RG}.
\end{equation}
On combining estimates \eqref{E:med-mv-1}, \eqref{E:med-mv-2} and
\eqref{E:med-mv-3}, one obtains \eqref{E:med-mv}.

The fact that the constant $\sqrt{\pi/2}$ in inequality \eqref{E:med-mv} is the
smallest possible can be verified on testing  the inequality on the sequence
$\{u_k\}$ defined as
\begin{equation*}
	u_k (x)
	= \begin{cases}
	0
	& \quad \text{for $x_1 \in(-\infty,0]$}
		\\
	k x_1
	& \quad \text{for $x_1 \in(0,\tfrac 1k]$}
		\\
	1
	& \quad \text{for $x_1 \in (\tfrac 1k,\infty)$.}
	\end{cases}
\end{equation*}
Indeed, $\med (u_k) = 0$ for $k\in\N$, $\lim_{k\to\infty} \mv(u_k) = \frac 12$,
and $\lim_{k\to\infty} \|\nabla u_k\|_{L^1\RG} = \frac 1{\sqrt{2\pi}}$.
\end{proof}

Given a Young function $B$, define the functions $\FF_{L^B}$ and $\FF_{m^B}$
from $(0, \infty)$ into $(0, \infty]$ as
\begin{equation} \label{E:F-LB}
	\FF_{L^B}(t)
		= \normIB
		+ \sqrt{\frac{\pi}{2}} B^{-1}(1)
\end{equation}
and
\begin{equation} \label{E:F-mB}
	\FF_{m^B}(t) =
		e^{\frac{t^2}{2}} \int_{t}^{\infty}
			B^{-1}\left( \frac{1}{\Phi(\tau)} \right) e^{-\frac{\tau^2}{2}}\,\d \tau
		+ \int_{0}^{t} B^{-1}\left( \frac{1}{\Phi(\tau)} \right)\,\d \tau
		+ \sqrt{\frac{\pi}{2}}
				\int_{0}^{1}B^{-1}\left( \frac{1}{s} \right)\d s
\end{equation}
for $t >0$.

The next lemma provides us with a  bound for the integral in \eqref{april1}
for any function $u$ satisfying either condition \eqref{E:nabla-Orlicz} or
\eqref{E:nabla-Marcinkiewicz-m}. Such a bound amounts to an integral depending
on either the function $\FF_{L^B}$ or $\FF_{m^B}$, respectively.

\begin{lemma} \label{L:Holder-estimates}
Let $\beta>0$ and $\kappa>0$, and let $B$ be a Young function.  Let  $X$
denote either  $L^B$ or $m^B$,  and let $\FF_X$ be defined as in \eqref{E:F-LB}
or \eqref{E:F-mB}, respectively. Then
\begin{equation} \label{E:exponential-estimate}
	\int_{\rn}
		e^{\left(\kappa |u| \right)^{\frac{2\beta}{2+\beta}}} \dgn
		\le \sqrt{\frac{2}{\pi}}
			\int_{0}^{\infty} e^{\left[ \kappa\FF_X(t) \right]^{\frac{2\beta}{2+\beta}}
				-\frac{t^2}{2}}\,\d t
\end{equation}
for every weakly differentiable function $u$ in $\rn$ satisfying \eqref{mu} and
such that
\begin{equation} \label{E:gradient-X-1}
	\|\nabla u\|_{X\RG}\le 1\,.
\end{equation}
\end{lemma}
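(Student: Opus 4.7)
The plan is to symmetrize, apply the pointwise bounds of Proposition~\ref{P:symubound}, and express each occurrence of $|\nabla u|^*$ in terms of the functional $\FF_X$---via H\"older's inequality \eqref{E:Holder} when $X=L^B$, and via the pointwise bound $|\nabla u|^*(r)\le B^{-1}(1/r)$ when $X=m^B$.

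First, since $u^\bullet$ is equimeasurable with $u$, the functions $|u^\bullet|$ and $|u|$ are equimeasurable; since $u^\bullet(x)=u^\circ(\Phi(x_1))$ depends only on $x_1$, integration in $x_1$ gives
\begin{equation*}
\int_{\rn} e^{(\kappa|u|)^{\frac{2\beta}{2+\beta}}}\dgn
   = \frac{1}{\sqrt{2\pi}}\int_{\R}
       e^{(\kappa|u^\circ(\Phi(t))|)^{\frac{2\beta}{2+\beta}}}
       \,e^{-t^2/2}\,\d t.
\end{equation*}
To apply Proposition~\ref{P:symubound} I would reduce to the case $\med(u)=0$: if $\mm=\med$ there is nothing to do, while if $\mm=\mv$ and $c=\med(u)$, the function $v=u-c$ satisfies $\med(v)=0$, $|\nabla v|=|\nabla u|$ and $|u^\circ(\Phi(t))|\le|v^\circ(\Phi(t))|+|c|$. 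Proposition~\ref{P:med-mv} gives $|c|\le\sqrt{\pi/2}\,\|\nabla u\|_{L^1\RG}$, and then \eqref{E:Holder} combined with \eqref{E:Orl_char} (for $X=L^B$), or the pointwise bound $|\nabla u|^*(r)\le B^{-1}(1/r)$ (for $X=m^B$), yields respectively
\begin{equation*}
|c|\le\sqrt{\tfrac{\pi}{2}}\,B^{-1}(1)
\quad\text{or}\quad
|c|\le\sqrt{\tfrac{\pi}{2}}\int_0^1 B^{-1}(1/s)\,\d s,
\end{equation*}
which match exactly the constant summands in $\FF_{L^B}$ and $\FF_{m^B}$.

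Next, Proposition~\ref{P:symubound} applied to $v$ gives $|v^\circ(\Phi(t))|\le F(\Phi(t))$ for $t\ge0$, where $F(s)$ is the right-hand side of \eqref{eq:symubound1}, and \eqref{eq:symubound2} handles $t<0$ symmetrically. When $X=L^B$, I would use the sharper intermediate estimate $v^\circ(s)\le\int_0^{1/2-s}|\nabla u|^*(r)/I(r+s)\,\d r$ from \eqref{eq:aug2}, apply \eqref{E:Holder} with $\|\nabla u\|_{L^B\RG}\le 1$, and invoke translation invariance of the Lebesgue Orlicz norm to obtain $F(\Phi(t))\le\normIB$. When $X=m^B$, I would substitute $|\nabla u|^*(r)\le B^{-1}(1/r)$ into $F(\Phi(t))$ and perform the change of variables $r=\Phi(\tau)$, using $\d r=-I(\Phi(\tau))\,\d\tau$ and $1/I(\Phi(t))=\sqrt{2\pi}\,e^{t^2/2}$; the two terms of $F(\Phi(t))$ then transform exactly into the first two summands of $\FF_{m^B}(t)$. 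Adding $|c|$ yields the uniform pointwise bound $|u^\circ(\Phi(t))|\le\FF_X(|t|)$.

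Inserting this bound into the first display, and using that both $\FF_X(|t|)$ and $e^{-t^2/2}$ are even, folds the $\R$-integral into twice the integral over $(0,\infty)$; the prefactor $2/\sqrt{2\pi}=\sqrt{2/\pi}$ produces the claimed constant in \eqref{E:exponential-estimate}. I expect the main obstacle to lie in the $m^B$ change of variables, where one must verify that $\tfrac{1}{I(s)}\int_0^s B^{-1}(1/r)\,\d r$ transforms into $e^{t^2/2}\int_t^\infty B^{-1}(1/\Phi(\tau))e^{-\tau^2/2}\,\d\tau$ without any loss. A minor point in the $L^B$ case is that the required translation invariance of the Orlicz norm is automatic, because the underlying measure on the subintervals of $(0,1)$ is Lebesgue.
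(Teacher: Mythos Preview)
Your proposal is correct and largely matches the paper's argument. The treatment of the constant term via Proposition~\ref{P:med-mv}, the $m^B$ case via Proposition~\ref{P:symubound} together with the change of variables $r=\Phi(\tau)$, and the folding of the integral over $\R$ into $(0,\infty)$ are exactly as in the paper; the change of variables you flag as a potential obstacle goes through without loss, since $I(\Phi(\tau))=(2\pi)^{-1/2}e^{-\tau^2/2}$ makes the two terms of $F(\Phi(t))$ coincide with the first two summands of $\FF_{m^B}(t)$ on the nose.

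The only genuine difference is in the $L^B$ case. The paper does \emph{not} go through Proposition~\ref{P:symubound} (or its intermediate inequality~\eqref{eq:aug2}) here; instead it writes
\[
v^\circ(s)=\int_s^{1/2}\bigl(-v^{\circ\prime}(r)I(r)\bigr)\frac{\d r}{I(r)}
\]
and applies H\"older's inequality~\eqref{E:Holder} together with the P\'olya--Szeg\H{o} principle of Proposition~\ref{P:PSnorm}, namely $\lVert -v^{\circ\prime}I\rVert_{L^B(0,1)}\le\|\nabla v\|_{L^B\RG}\le 1$, to get $v^\circ(s)\le\opnorm{1/I}_{L^{\tilde B}(s,1/2)}$ in one step. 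Your route---use~\eqref{eq:aug2}, apply H\"older to $|\nabla u|^*$ against $1/I(\cdot+s)$, then translate---reaches the same bound, but it reaches inside the proof of Proposition~\ref{P:symubound} for an inequality that is not part of its statement, and it trades the P\'olya--Szeg\H{o} inequality for a Hardy--Littlewood rearrangement step plus a translation-invariance observation. Both are valid; the paper's version is slightly more self-contained.
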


\begin{proof}
Assume that $u$ obeys $\mv (u)=0$, the case when $\med (u)=0$ being even simpler.
Let us set $v=u-\med (u)$. Then $v$ is weakly differentiable, $\med(v)=0$
and $\nabla u =\nabla v$.

Let us begin by considering the case when  $X=L^B$.
By H\"older's inequality \eqref{E:Holder}, Proposition~\ref{P:PSnorm}
and \eqref{E:gradient-X-1}, we have that
\begin{align} \label{E:v-circ-bound-L-first}
	\begin{split}
	0 \leq v^\circ(s)
		& = \int_{s}^{\frac12} -v^\circ{}'(r)I(r)\frac{\,\d r}{I(r)}
			\le \lVert-v^\circ{}'I\rVert_{L^B(0,\frac12)}
			\opnorm*{\frac{1}{I}}_{L^{\tilde B}(s,\frac12)}
				\\
		& \le \|\nabla v\|_{L^B\RG}
			\opnorm*{\frac{1}{I}}_{L^{\tilde B}(s,\frac12)}
			\leq \opnorm*{\frac{1}{I}}_{L^{\tilde B}(s,\frac12)}
	\end{split}
\end{align}
for any $s\in(0,\tfrac12]$. On the other hand, owing to equation  \eqref{symmetry},
\begin{align}\begin{split} \label{E:v-circ-bound-L-second}
	0& \leq  -v^\circ(s)
		  = \int_{\frac12}^{s} -v^\circ{}'(r)I(r)\frac{\,\d r}{I(r)}
			\\ &  \le \|\nabla v\|_{L^B\RG}
					\opnorm*{\frac{1}{I}}_{L^{\tilde B}(\frac12,s)}
		 \leq \opnorm*{\frac{1}{I}}_{L^{\tilde B}(1-s,\frac12)}\quad \text{for $s\in(\tfrac12,1)$.}
\end{split}
\end{align}
Next,  from Proposition~\ref{P:med-mv}, H\"older's inequality \eqref{E:Holder}
and equation \eqref{E:Orl_char} we infer that
\begin{equation*}
	|\med (u)|
		\le \sqrt{\frac{\pi}{2}} \|\nabla u\|_{L^1\RG}
		\le \sqrt{\frac{\pi}{2}} \|\nabla u\|_{L^B\RG}
			\opnorm*{1}_{L^{\tilde B}\RG}
		\le \sqrt{\frac{\pi}{2}} B^{-1}(1),
\end{equation*}
whence, on setting $C= \sqrt{\frac{\pi}{2}} B^{-1}(1)$,
\begin{equation} \label{E:uv-diff}
	|u(x)|
		\le |v(x)| +  |\med (u)|
		\le |v(x)| + C
		\quad\text{for $x\in\rn$}.
\end{equation}
By \eqref{E:uv-diff}, since $\med(v)=0$,
\begin{align} \label{E:exponential-estimate-u}
	\begin{split}
	\int_{\rn} e^{\left(\kappa |u| \right)^{\frac{2\beta}{2+\beta}}}\dgn
 		& \leq \int_{\rn}
				e^{\left(\kappa |v| + \kappa C \right)^{\frac{2\beta}{2+\beta}}}\dgn
			\\
		& = \int_{0}^{\frac{1}{2}}
				e^{\left(\kappa v^\circ(s) + \kappa C \right)^{\frac{2\beta}{2+\beta}}}\d s
			+ \int_{0}^{\frac{1}{2}}
				e^{\left(-\kappa v^\circ(1-s) + \kappa C \right)^{\frac{2\beta}{2+\beta}}}\d s\,.
		\end{split}
\end{align}
Hence, via  inequalities ~\eqref{E:v-circ-bound-L-first}
and \eqref{E:v-circ-bound-L-second}, one deduces that
\begin{equation*}
	\int_{\rn} e^{\left(\kappa |u| \right)^{\frac{2\beta}{2+\beta}}}\dgn
	\le 2 \int_{0}^{\frac{1}{2}} \exp
		\left\{
			\left(
				\kappa\opnorm*{\frac{1}{I}}_{L^{\tilde B}(s,\frac12)}
				+ \kappa C
			\right)^{\frac{2\beta}{2+\beta}}
		\right\}\d s.
\end{equation*}
Inequality  \eqref{E:exponential-estimate} thus follows by the change of
variables $\Phi(t)=s$.

Next, assume that $X=m^B$. Assumption \eqref{E:gradient-X-1} and  the very
definition of Marcinkiewicz quasi-norm implies that
\begin{equation} \label{mar21}
	|\nabla u|^*(s) = |\nabla v|^*(s)
		\le B^{-1}\left( \frac{1}{s} \right)
		\quad\text{for $s\in(0,1)$}.
\end{equation}
By Proposition~\ref{P:med-mv},
\begin{equation*}
	|\med (u)-\mv (u)|
		\le \sqrt{\frac{\pi}{2}} \|\nabla u\|_{L^1\RG}
		\le \sqrt{\frac{\pi}{2}} \|B^{-1}(1/s)\|_{L^1(0,1)}
		= \sqrt{\frac{\pi}{2}} \int_{0}^{1}B^{-1}\left( \frac{1}{s} \right)\d s.
\end{equation*}
Thus, inequalities \eqref{E:uv-diff} and \eqref{E:exponential-estimate-u}
continue to hold, with $C = \sqrt{\frac{\pi}{2}} \int_{0}^{1}B^{-1}\left(
\frac{1}{s} \right)\d s$.  Hence, by Proposition~\ref{P:symubound} and
inequality \eqref{mar21},
\begin{align*}
	& \int_{\rn}
		e^{\left(\kappa |u| \right)^{\frac{2\beta}{2+\beta}}}
		\dgn
	\le 2 \int_{0}^{\frac{1}{2}} \exp
		\left\{
			\left(
				\frac{\kappa}{I(s)} \int_{0}^{s} B^{-1}\left( \frac{1}{r} \right)\,\d r
				+ \kappa \int_{s}^{\infty} \frac{B^{-1}\left( \frac{1}{r} \right)}{I(r)}\,\d r
				+ \kappa C
			\right)^{\frac{2\beta}{2+\beta}}
		\right\}
	\d s.
\end{align*}
Thereby, inequality \eqref{E:exponential-estimate}  follows via the change of
variables $r=\Phi(\tau)$ and $t=\Phi(s)$.
\end{proof}

An analogue of Lemma \eqref{L:Holder-estimates} under condition
\eqref{Linfinity} is provided by the last result of this section.

\begin{lemma}
\label{L:Holder-estimates-Linfty}
Let $\kappa >0$.  Then
\begin{equation} \label{E:exponential-estimate-infty}
	\int_{\rn} e^{(\kappa u)^2}\,\dgn
		\le \sqrt{\frac{2}{\pi}}
			\int_{0}^\infty e^{[\kappa \FF_{L^\infty}(t)]^2 - \frac{t^2}{2}}\,\d t
\end{equation}
for every weakly differentiable function $u$ obeying   \eqref{mu}  and such
that $\|\nabla u\|_\infty \le 1$. Here,
$\FF_{L^\infty}\colon(0,\infty)\to(0,\infty)$ denotes the function defined as
\begin{equation} \label{E:F-infty}
	\FF_{L^\infty}(t) =
		\begin{cases}
			t & \text{if $\med(u)=0$}
				\\
			t - 2\Phi'(t) - 2t\Phi(t)
				& \text{if $\mv(u)=0$}
		\end{cases}
		\quad\text{for $t>0$}.
\end{equation}
\end{lemma}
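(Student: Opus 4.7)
The plan is to follow the scheme of Lemma~\ref{L:Holder-estimates}: pass to the signed rearrangement $u^\circ$, bound $|u^\circ|$ pointwise using the $L^\infty$ constraint on $\nabla u$, and convert the resulting one-dimensional integral via the change of variable $s=\Phi(t)$.

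By equimeasurability of $u$ with $u^\circ$ and evenness of the integrand, one has $\int_{\rn}e^{(\kappa u)^2}\,\dgn = \int_0^1 e^{(\kappa u^\circ(s))^2}\,\d s$. Proposition~\ref{P:PSnorm}, applied with the Young function $A(t)=\infty\chi_{(1,\infty)}(t)$ (corresponding to $L^\infty$), gives $\|{-u^\circ}'I\|_{L^\infty(0,1)}\le \|\nabla u\|_{L^\infty\RG}\le 1$, hence $|u^\circ{}'(r)|\,I(r)\le 1$ for a.e.\ $r\in(0,1)$.

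If $\med(u)=0$, then $u^\circ(\tfrac12)=0$, and a direct integration of the bound on $|u^\circ{}'|$ combined with the substitution $r=\Phi(\tau)$, under which $\d r/I(r)=-\,\d\tau$ by~\eqref{june30}, yields $|u^\circ(\Phi(t))|\le |t|$ for every $t\in\R$. If instead $\mv(u)=0$, I would start from the Fubini identity~\eqref{jun1},
\[
u^\circ(s) = \int_0^1 (\chi_{(s,1)}(r)-r)(-u^\circ{}'(r))\,\d r,
\]
insert the factor $I(r)/I(r)$, and apply the $L^\infty$ bound to obtain $|u^\circ(s)|\le \int_0^1 |\chi_{(s,1)}(r)-r|/I(r)\,\d r$. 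Splitting this into $\int_0^s r/I(r)\,\d r+\int_s^1(1-r)/I(r)\,\d r$, performing the change of variable $r=\Phi(\tau)$, and evaluating the resulting integrals via the integration-by-parts identity $\int_t^\infty \Phi(\tau)\,\d\tau = -\Phi'(t)-t\Phi(t)$, I would land on $|u^\circ(\Phi(t))|\le t-2\Phi'(t)-2t\Phi(t)$ for every $t\in\R$. Since this expression is even in $t$, the bound can be recast as $|u^\circ(\Phi(t))|\le \FF_{L^\infty}(|t|)$.

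Finally, the change of variable $s=\Phi(t)$ turns $\int_0^1 e^{(\kappa u^\circ(s))^2}\,\d s$ into $\int_\R e^{(\kappa u^\circ(\Phi(t)))^2}(-\Phi'(t))\,\d t$; combining the pointwise bound with evenness of $-\Phi'$ and of $\FF_{L^\infty}$ folds this into $2\int_0^\infty e^{(\kappa\FF_{L^\infty}(t))^2}(-\Phi'(t))\,\d t$, which by $-2\Phi'(t)=\sqrt{2/\pi}\,e^{-t^2/2}$ is exactly the right-hand side of~\eqref{E:exponential-estimate-infty}. The main obstacle will be pinning down the explicit form of $\FF_{L^\infty}$ in the $\mv(u)=0$ case: the expression $t-2\Phi'(t)-2t\Phi(t)$ emerges only through careful bookkeeping in the splitting of~\eqref{jun1} and the exact evaluation of $\int_t^\infty\Phi(\tau)\,\d\tau$ via integration by parts.
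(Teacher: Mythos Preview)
Your proposal is correct and follows essentially the same route as the paper's proof: both cases start from Proposition~\ref{P:PSnorm} to get $\lVert -u^\circ{}'I\rVert_{L^\infty}\le 1$, the $\med(u)=0$ case is handled by direct integration of $1/I$ with the substitution $r=\Phi(\tau)$, and the $\mv(u)=0$ case uses the identity~\eqref{jun1}, the splitting $\int_0^s r/I + \int_s^1(1-r)/I$, and the explicit evaluation of $\int_0^s r/I(r)\,\d r=\int_{\Phi^{-1}(s)}^\infty\Phi(\tau)\,\d\tau$. The only cosmetic difference is that you evaluate $\int_t^\infty\Phi(\tau)\,\d\tau=-\Phi'(t)-t\Phi(t)$ by integration by parts, whereas the paper does it via Fubini; and you phrase the final folding step as evenness of $\FF_{L^\infty}$ in $t$, whereas the paper phrases it as symmetry of the bound in $s$ about $\tfrac12$.
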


\begin{proof}
Assume first that $\med(u)=0$. We have that
\begin{align*}
	0 \leq u^\circ(s)
		& = \int_{s}^\frac12 -u^\circ{}'(r)I(r)\frac{\d r}{I(r)}
			\le \lVert-u^\circ{}'I\rVert_{L^\infty(0,1)}
			\int_{s}^\frac12 \frac{\d r}{I(r)}
			\\
		& \le \|\nabla u\|_{L^\infty\RG}
			\int_0^{\Phi^{-1}(s)} \frac{-\Phi'(t)}{I(\Phi(t))}\,\d t
		\le \Phi^{-1}(s) \quad \text{for $s\in(0,\tfrac12)$,}
\end{align*}
where we have made use of the change of variables $r=\Phi(t)$, of
Proposition~\ref{P:PSnorm} and of equation \eqref{june30}.  Similarly, thanks
to equation \eqref{symmetry},
\begin{align*}
	0\leq -u^\circ(s)
		& = \int_\frac12^s -u^\circ{}'(r)I(r)\frac{\d r}{I(r)}
		 \le \lVert-u^\circ{}'I\rVert_{L^\infty(0,1)}
			\int_\frac12^s \frac{\d r}{I(r)}
			\\
		& \le
			\int_{1-s}^\frac12 \frac{\d r}{I(r)}
				\le \Phi^{-1}(1-s)
			\quad\text{for $s\in(\tfrac12,1)$.}
\end{align*}
Therefore,
\begin{align*}
	\int_{\rn} e^{\left(\kappa |u| \right)^2} \dgn
	 \le \int_{0}^{\frac{1}{2}}
				e^{\left(\kappa u^\circ(s)\right)^2} \d s
			+ \int_{0}^{\frac{1}{2}}
				e^{\left(\kappa u^\circ(1-s) \right)^2} \d s
		 \le 2 \int_{0}^\frac12
				e^{\left(\kappa\Phi^{-1}(s) \right)^2} \d s
		= \sqrt{\frac{2}{\pi}} \int_{0}^\infty
				e^{\left(\kappa s\right)^2-\frac{s^2}{2}} \d s,
\end{align*}
namely \eqref{E:exponential-estimate-infty}.

Next, assume that $\mv(u)=0$. By \eqref{jun1},
\begin{equation*}
	u^\circ(s)
		= \int_{0}^1 \frac{\chi_{(s,1)}(r)-r}{I(r)}
				\bigl( -u^\circ{}'(r)I(r)\bigr)\,\d r
			\quad\text{for $s\in(0,1)$},
\end{equation*}
whence, by Proposition~\ref{P:PSnorm} and equation
\eqref{symmetry},
\begin{align} \label{jun2}
	\begin{split}
	|u^\circ(s)|
		 \le \lVert-u^\circ{}'I\rVert_{L^\infty(0,1)}
				\int_{0}^1 \frac{|\chi_{(s,1)}(r)-r|}{I(r)}\,\d r
		& \le \|\nabla u\|_{L^\infty\RG}
			\left( \int_{0}^s \frac{r}{I(r)}\,\d r
				+ \int_{s}^{1} \frac{1-r}{I(r)}\,\d r \right)
			\\
		& \le \int_{0}^s \frac{r}{I(r)}\,\d r
				+ \int_{0}^{1-s} \frac{r}{I(r)}\,\d r
			\quad\text{for $s\in(0,1)$}.
	\end{split}
\end{align}
By a change of variables, equation \eqref{june30}
and Fubini's theorem,
\begin{align*}
	\int_{0}^s \frac{r}{I(r)}\,\d r
		& = \int_{\Phi^{-1}(s)}^\infty \frac{\Phi(t)}{-I(\Phi(t))}\Phi'(t)\,\d t
			= \int_{\Phi^{-1}(s)}^\infty \Phi(t)\,\d t
			 = \int_{\Phi^{-1}(s)}^\infty \frac{1}{\sqrt{2\pi}}
					\int_{t}^\infty e^{-\frac{\tau^2}{2}}\,\d \tau\d t
			\\ & = \frac{1}{\sqrt{2\pi}} \int_{\Phi^{-1}(s)}^\infty
					e^{-\frac{t^2}{2}} \int_{\Phi^{-1}(s)}^t\,\d \tau\d t
			 = \frac{1}{\sqrt{2\pi}} \int_{\Phi^{-1}(s)}^\infty
					te^{-\frac{t^2}{2}}\,\d t
				- \Phi^{-1}(s)\frac{1}{\sqrt{2\pi}}
					\int_{\Phi^{-1}(s)}^\infty e^{-\frac{t^2}{2}}\,\d t
			\\ & = I(s) - \Phi^{-1}(s)s
			\quad\text{for $s\in(0,1)$}.
\end{align*}
Now, observe that $\Phi(-t)=1-\Phi(t)$ for $t\in \mathbb R$, whence
\begin{equation*}
	\Phi^{-1}(1-s)=-\Phi^{-1}(s)
		\quad\text{for $s\in(0,1)$}.
\end{equation*}
Thus, owing to equation  \eqref{symmetry}, inequality \eqref{jun2} yields
\begin{align}\label{june31}
	|u^\circ(s)|
		 \le I(s) - \Phi^{-1}(s)s + I(s) + \Phi^{-1}(s)(1-s)
			 = \Phi^{-1}(s) + 2I(s) - 2\Phi^{-1}(s)s
		 \quad\text{for $s\in(0,1)$.}
\end{align}
Altogether, by  the symmetry of the rightmost side of \eqref{june31} about $\frac 12$ and a change of variables,
\begin{align*}
	\int_{\rn} e^{\left(\kappa |u| \right)^2} \dgn
	= \int_{0}^{1} e^{\left(\kappa |u^\circ(s)|\right)^2} \d s
	&	\le  \int_{0}^1
			e^{\kappa^2\left(\Phi^{-1}(s) + 2I(s) - 2\Phi^{-1}(s)s \right)^2} \d s
		\\
	& = \sqrt{\frac{2}{\pi}} \int_{0}^\infty
		e^{\kappa^2\left(t+2I(\Phi(t))-t\Phi(t)\right)^2-\frac{t^2}{2}} \d t.
\end{align*}
Equation \eqref{E:exponential-estimate-infty} hence follows via \eqref{june30}.
\end{proof}

\section{Asymptotic expansions} \label{SE:back}

We are concerned here with various delicate asymptotic estimates for norm and
integral functionals, of exponential type, evaluated at the function $\Phi$
introduced  in \eqref{E:Phi-def}. Specifically, we deal with the functions
$\FF_{m^B}$ and $\FF_{L^B}$ defined by \eqref{E:F-mB} and \eqref{E:F-LB}.

Given a function  $\mathcal F$   defined in some neighborhood of
infinity, and $k\in \mathbb N$,   the notation
\begin{equation*}
	\mathcal F(t) = \mathcal E_1(t) + \cdots + \mathcal E_k(t) + \cdots
		\quad\text{as $t\to\infty$}
\end{equation*}
means that
\begin{equation*}
	\lim_{t\to\infty} \frac{\mathcal F(t)}
		{\mathcal E_1(t)} = 1
			\quad\text{if  $k=1$},\quad  \text{and} \quad  \lim_{t\to\infty} \frac{\mathcal F(t) - [\mathcal E_1(t) + \cdots + \mathcal E_{j}(t)]}
		{\mathcal E_{j+1}(t)} = 1
			\quad\text{for $1\le j\le k-1$, otherwise.}
\end{equation*}

Clearly, if
\begin{equation*}
	\mathcal F(t) = \mathcal E_1(t) + \mathcal E_2(t) + \cdots
		\quad\text{as $t\to\infty$},
\end{equation*}
and $\sigma >0$, then
\begin{equation} \label{E:power}
	[\mathcal F(t)]^\sigma
		= \mathcal E_1^\sigma(t) + \sigma \mathcal E_1^{\sigma-1}(t) \mathcal E_2(t)
			+ \cdots
		\quad\text{as $t\to\infty$.}
\end{equation}
Parallel notations will be used for asymptotic formulas as $t\to t_0$ and $t\to
t_0^+$, for some $t_0\in\R$.

We begin with two basic asymptotic  expansions contained in Lemmas~\ref{L:Filog}
and \ref{L:asyJ1} below. They easily follow from elementary considerations, via
applications of L'H\^opital's rule. Their proofs are omitted, for brevity.

\begin{lemma} \label{L:Filog}
Let $\Phi$ be given by \eqref{E:Phi-def}. Then
\begin{equation} \label{E:logPhi}
	-\log \Phi(t) = \frac{t^2}{2} +\log t + \cdots
		\quad\text{as $t\to\infty$}
\end{equation}
and
\begin{equation} \label{E:Phi-prime}
	-\Phi'(t) = t\Phi(t) + \frac{\Phi(t)}{t} + \cdots
		\quad\text{as $t\to\infty$}.
\end{equation}
\end{lemma}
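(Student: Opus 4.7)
The plan is to derive both expansions from the classical Mills-ratio asymptotic for $\Phi$, which itself is a one-line integration by parts. Writing
\begin{equation*}
	\sqrt{2\pi}\,\Phi(t) = \int_t^\infty e^{-\tau^2/2}\,\d\tau
		= \frac{e^{-t^2/2}}{t} - \int_t^\infty \frac{e^{-\tau^2/2}}{\tau^2}\,\d\tau,
\end{equation*}
and estimating the remainder by $\int_t^\infty e^{-\tau^2/2}/\tau^2 \,\d\tau \le t^{-2}\int_t^\infty e^{-\tau^2/2}\,\d\tau$, one immediately obtains the leading-order Mills asymptotic
\begin{equation*}
	\Phi(t) = \frac{e^{-t^2/2}}{t\sqrt{2\pi}}\bigl(1+o(1)\bigr) \quad \text{as } t\to\infty.
\end{equation*}
The same conclusion also follows from a direct L'H\^opital argument applied to $\sqrt{2\pi}\,\Phi(t)\big/\bigl(e^{-t^2/2}/t\bigr)$, in the spirit suggested by the authors.

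For \eqref{E:logPhi}, taking logarithms in the Mills asymptotic yields
\begin{equation*}
	-\log\Phi(t) = \frac{t^2}{2} + \log t + \tfrac{1}{2}\log(2\pi) + o(1) \quad \text{as } t\to\infty,
\end{equation*}
from which $-\log\Phi(t)\big/(t^2/2) \to 1$ and $\bigl(-\log\Phi(t) - t^2/2\bigr)\big/\log t \to 1$ (since $\tfrac12\log(2\pi) + o(1) = o(\log t)$). These are precisely the two limits required by the asymptotic notation introduced right before the lemma.

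For \eqref{E:Phi-prime}, since $-\Phi'(t) = e^{-t^2/2}/\sqrt{2\pi}$, the leading-order claim $-\Phi'(t) \sim t\,\Phi(t)$ is immediate from the Mills asymptotic. For the second-order claim, the starting identity rearranges as
\begin{equation*}
	-\Phi'(t) - t\Phi(t) = \frac{t}{\sqrt{2\pi}}\int_t^\infty \frac{e^{-\tau^2/2}}{\tau^2}\,\d\tau,
\end{equation*}
and a further integration by parts gives
\begin{equation*}
	\int_t^\infty \frac{e^{-\tau^2/2}}{\tau^2}\,\d\tau
		= \frac{e^{-t^2/2}}{t^3} - 3\int_t^\infty \frac{e^{-\tau^2/2}}{\tau^4}\,\d\tau
		= \frac{e^{-t^2/2}}{t^3}\bigl(1+O(t^{-2})\bigr).
\end{equation*}
Combining with $\Phi(t)/t \sim e^{-t^2/2}/(t^2\sqrt{2\pi})$ then yields $\bigl(-\Phi'(t) - t\Phi(t)\bigr)\big/\bigl(\Phi(t)/t\bigr) \to 1$, as required. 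The whole argument is entirely routine and no step poses a genuine obstacle; the only mild care needed is verifying that the remainder from the second integration by parts is of lower order than $\Phi(t)/t$, which is evident once the Mills asymptotic is in hand.
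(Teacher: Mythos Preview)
Your proof is correct. The paper does not actually give a proof of this lemma: it is stated without proof, with the remark that it ``easily follow[s] from elementary considerations, via applications of L'H\^opital's rule.'' Your argument instead proceeds via the classical Mills-ratio expansion obtained by repeated integration by parts, which is an equally standard and entirely valid route; you also note that L'H\^opital would work, matching the paper's hint. Both approaches are elementary and yield the same two-term expansions, so there is no substantive difference to discuss.
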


\begin{lemma} \label{L:asyJ1}
Let $\beta>0$  and let $\Phi$ be given by
\eqref{E:Phi-def}. Assume that $B$ is any Young function satisfying condition \eqref{BN} for some  $N>0$.
Then
\begin{equation*}
	e^{\frac{t^2}{2}} \int_{t}^\infty
		B^{-1}\left( \frac{1}{\Phi(\tau)} \right)
			e^{-\frac{\tau^2}{2}}\,\d \tau
		= 2^{-\frac{1}{\beta}} t^{\frac{2}{\beta}-1} + \cdots
		\quad\text{as $t\to\infty$}.
\end{equation*}
\end{lemma}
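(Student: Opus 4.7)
The strategy is to substitute explicit asymptotic expressions into the integrand, and then apply a standard L'H\^opital-type estimate for one-sided Gaussian tail integrals.

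First, I would exploit the shape of $B$ postulated in \eqref{BN}: for $s$ large, $B^{-1}(s) = (\log(s/N))^{1/\beta}$. Hence as $\tau\to\infty$, Lemma~\ref{L:Filog} yields
\begin{equation*}
	B^{-1}\!\left(\frac{1}{\Phi(\tau)}\right)
	= \bigl(-\log \Phi(\tau) - \log N\bigr)^{1/\beta}
	= \Bigl(\tfrac{\tau^2}{2} + \log \tau + \cdots\Bigr)^{1/\beta}
	= 2^{-1/\beta}\tau^{2/\beta} + \cdots,
\end{equation*}
where the last step uses \eqref{E:power} with $\sigma = 1/\beta$ on the expansion of $-\log\Phi(\tau)$. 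So the integrand is comparable to $2^{-1/\beta}\tau^{2/\beta}e^{-\tau^2/2}$ for large $\tau$.

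The main quantitative ingredient is the classical tail bound
\begin{equation*}
	\int_t^\infty \tau^{\alpha}e^{-\tau^2/2}\,\d\tau \sim t^{\alpha-1}e^{-t^2/2}
		\quad\text{as } t\to\infty,
\end{equation*}
valid for any real $\alpha$; it follows at once from L'H\^opital's rule applied to the ratio with $t^{\alpha-1}e^{-t^2/2}$. Applying this with $\alpha = 2/\beta$ gives
\begin{equation*}
	2^{-1/\beta}\,e^{t^2/2}\int_t^\infty \tau^{2/\beta}e^{-\tau^2/2}\,\d\tau
	\;\sim\; 2^{-1/\beta} t^{2/\beta - 1},
\end{equation*}
which matches the desired leading term.

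It remains to pass from the pointwise asymptotic of the integrand to the asymptotic of the integral. I would write $B^{-1}(1/\Phi(\tau)) = 2^{-1/\beta}\tau^{2/\beta}(1+\varepsilon(\tau))$ with $\varepsilon(\tau)\to 0$, split the integral accordingly, bound the error contribution by $\sup_{\tau\ge t}|\varepsilon(\tau)|$ times the leading integral, and conclude via the L'H\^opital estimate above that the $\varepsilon$-part is $o(t^{2/\beta-1})$. This is the only mildly delicate point: because the expansion of $B^{-1}(1/\Phi(\tau))$ involves the subleading $\log\tau$ term inside a $1/\beta$ power, one must verify that its contribution to the integral is genuinely lower order than $t^{2/\beta-1}$ and not, for example, a logarithmic correction of the same order. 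This follows because $(1 + 2\log\tau/\tau^2)^{1/\beta} - 1 = O(\log\tau/\tau^2)$, which produces an integral contribution of order $t^{2/\beta - 3}\log t$, comfortably absorbed into the $\cdots$ term.
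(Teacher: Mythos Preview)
Your proposal is correct and aligns with what the paper indicates: the authors omit the proof entirely, stating only that it ``easily follow[s] from elementary considerations, via applications of L'H\^opital's rule.'' Your plan---reducing $B^{-1}(1/\Phi(\tau))$ to $2^{-1/\beta}\tau^{2/\beta}(1+o(1))$ via Lemma~\ref{L:Filog} and then invoking the L'H\^opital tail estimate $\int_t^\infty \tau^\alpha e^{-\tau^2/2}\,\d\tau \sim t^{\alpha-1}e^{-t^2/2}$---is precisely the kind of argument they have in mind, and your error control via $\sup_{\tau\ge t}|\varepsilon(\tau)|$ is a clean way to justify passing from the pointwise asymptotic to the integral.
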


The next result provides us with an expansion for the function $\FF_{m^B}$
defined by \eqref{E:F-mB}, which holds for every function $B$ fulfilling
assumption \eqref{BN} and for every $\beta>0$.

\begin{lemma} \label{L:B-integral}
Let $\beta>0$ and let $\Phi$ be given by \eqref{E:Phi-def}. Assume that $B$ is
a Young function satisfying condition \eqref{BN} for some $N>0$. Then
\begin{equation} \label{E:J2}
	\int_{0}^{t} B^{-1}\left( \frac{1}{\Phi(\tau)} \right)\d \tau
	= 2^{-\frac{1}{\beta}}\frac{\beta}{2+\beta} t^{\frac{2}{\beta}+1}
		+ \begin{cases}
			2^{-\frac{1}{\beta}} \frac{2}{2-\beta}t^{\frac{2}{\beta}-1}\log t + \cdots
				& \text{if $\beta\in(0,2)$}
				\\
			\frac{1}{2\sqrt{2}} (\log t)^2 + \cdots
				& \text{if $\beta = 2$}
				\\
			c  + \cdots,
				& \text{if $\beta\in(2,\infty)$}
		\end{cases}
		\quad\text{as $t\to\infty$.}
\end{equation}
Here, $c=c(B)\in \mathbb R$ is a   constant   depending on the global behavior
of $B$.  Consequently,
\begin{equation} \label{E:J2-power}
	\bigl[\kappab
		\FF_{m^B}(t)
	\bigr]^\frac{2\beta}{2+\beta}
		= \frac{t^2}{2}
		+ \begin{cases}
			\frac{2}{2-\beta} \log t + \cdots
				& \text{if $\beta\in(0,2)$}
				\\
			\frac12(\log t)^2 + \cdots
				& \text{if $\beta = 2$}
				\\
			c'\, t^{1-\frac{2}{\beta}} + \cdots
				& \text{if $\beta\in(2,\infty)$}
		\end{cases}
		\quad\text{as $t\to\infty$},
\end{equation}
where $\FF_{m^B}$ is defined by \eqref{E:F-mB}, $\kappab$ is given by
\eqref{E:kappab}, and $c'=c'(B)\in \mathbb R$ is a constant depending on the
global behavior of $B$.
\end{lemma}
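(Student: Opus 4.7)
The plan is to insert the explicit form $B^{-1}(r) = (\log(r/N))^{1/\beta}$, valid for large $r$ by \eqref{BN}, to expand $B^{-1}(1/\Phi(\tau))$ asymptotically as $\tau \to \infty$ using \eqref{E:logPhi}, to integrate term by term so as to establish \eqref{E:J2}, and finally to raise $\kappab \FF_{m^B}$ to the power $2\beta/(2+\beta)$ via the rule \eqref{E:power}.

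Combining $-\log \Phi(\tau) = \tau^2/2 + \log \tau + \cdots$ from Lemma~\ref{L:Filog} with a one-term Taylor expansion of $(\cdot)^{1/\beta}$ around $\tau^2/2$ yields
\begin{equation*}
B^{-1}\!\left(\frac{1}{\Phi(\tau)}\right) = 2^{-1/\beta} \tau^{2/\beta} + \frac{2^{1-1/\beta}}{\beta}\tau^{2/\beta - 2} \log \tau + \cdots \quad \text{as } \tau \to \infty.
\end{equation*}
Integration of the leading term over $(0,t)$ immediately produces $2^{-1/\beta}\frac{\beta}{2+\beta} t^{2/\beta+1}$, the principal contribution in \eqref{E:J2}. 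For the second-order contribution, integration by parts of $\int_1^t \tau^{2/\beta -2} \log\tau\, d\tau$ splits naturally into three cases depending on the sign of $2/\beta - 1$: if $\beta \in (0,2)$ the exponent $2/\beta - 2 > -1$ and the integral grows like $\frac{\beta}{2-\beta} t^{2/\beta - 1}\log t$; if $\beta = 2$ the exponent is $-1$ exactly and the integral equals $\frac{1}{2}(\log t)^2$; if $\beta \in (2,\infty)$ the exponent is strictly less than $-1$ and the integral converges, whose limit, added to the contribution of $\int_0^{\tau_0} B^{-1}(1/\Phi(\tau))\,d\tau$ over the interval on which the pure logarithmic formula for $B^{-1}$ is not yet valid, defines the constant $c = c(B)$. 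Multiplying by the prefactor $2^{1-1/\beta}/\beta$ recovers the three cases of \eqref{E:J2}.

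To deduce \eqref{E:J2-power}, I would assemble $\FF_{m^B}$ from \eqref{E:F-mB}: its first piece is $2^{-1/\beta} t^{2/\beta - 1} + \cdots$ by Lemma~\ref{L:asyJ1}, its third is a finite constant, so the dominant term comes from the second piece and equals $\mathcal E_1 := \kappab \cdot 2^{-1/\beta}\frac{\beta}{2+\beta} t^{2/\beta + 1}$. The key algebraic identity is
\begin{equation*}
\kappab \cdot 2^{-1/\beta}\frac{\beta}{2+\beta} = \frac{2+\beta}{\sqrt{2}\,\beta} \cdot \frac{2^{-1/\beta}\beta}{2+\beta} = 2^{-1/\beta - 1/2},
\end{equation*}
which combined with $(\tfrac{2}{\beta}+1)\cdot \tfrac{2\beta}{2+\beta} = 2$ and $(\tfrac{1}{\beta}+\tfrac{1}{2})\cdot \tfrac{2\beta}{2+\beta} = 1$ forces $\mathcal E_1^{\sigma} = t^2/2$ with $\sigma = 2\beta/(2+\beta)$. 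Applying \eqref{E:power} and computing the correction $\sigma \mathcal E_1^{\sigma-1} \mathcal E_2$ in each regime then reproduces the right-hand sides $\frac{2}{2-\beta}\log t$, $\frac{1}{2}(\log t)^2$, and $c' t^{1 - 2/\beta}$ respectively.

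\textbf{Main obstacle.} The delicate point is bookkeeping in the regime $\beta > 2$: several pieces of $\FF_{m^B}$, namely the first piece of order $t^{2/\beta-1}$ from Lemma~\ref{L:asyJ1}, the constant coming from the second piece, and the third piece itself, all contribute corrections of the same order after being raised to the power $\sigma < 1$, producing additive terms of order $t^{1-2/\beta}$; one must verify that their sum correctly identifies $c'$ and that $c'$ depends only on the global shape of $B$ through $c$ and $\int_0^1 B^{-1}(1/s)\,ds$. A secondary technical point is rigorously controlling the remainder in the Taylor expansion $(A+B)^{1/\beta} = A^{1/\beta}(1 + B/A)^{1/\beta}$, which requires that the error terms hidden in the $\cdots$ of Lemma~\ref{L:Filog} integrate to something subdominant against the corrections retained above.
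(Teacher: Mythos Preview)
Your proposal is correct and follows essentially the same route as the paper's proof: expand $B^{-1}(1/\Phi(\tau))$ via Lemma~\ref{L:Filog} and \eqref{E:power}, integrate to obtain \eqref{E:J2}, then assemble $\FF_{m^B}$ and apply \eqref{E:power} once more for \eqref{E:J2-power}. The only differences are that the paper extracts the second-order terms in \eqref{E:J2} by L'H\^opital's rule applied to $(J(t)-g(t))/h(t)$ (with $g$ the leading term and $h$ the candidate correction) rather than by term-by-term integration, which automatically absorbs the remainder you flag as a secondary concern; and your bookkeeping worry for $\beta>2$ dissolves on inspection, since the first piece of $\FF_{m^B}$ is of order $t^{2/\beta-1}\to 0$, strictly below the constant pieces, so only the latter feed into $c'$.
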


\begin{proof}
Denote the integral on the left-hand side of \eqref{E:J2} by
$J(t)$ and define the function $g\colon (0, \infty) \to (0,\infty)$ as
\begin{equation*}
	g(t)=2^{-\ib}\frac{\beta}{2+\beta}t^{\iib+1}
		\quad\text{for $t>0$}.
\end{equation*}
Clearly $B^{-1}(r)=(\log r-\log N)^\ib$ near infinity.
By L'H\^opital's rule and Lemma~\ref{L:Filog},
\begin{equation*}
	\lim_{t\to\infty} \frac{J(t)}{g(t)}
		= \lim_{t\to\infty} \frac{B^{-1}\bigl(\frac{1}{\Phi(t)}\bigr)}{2^{-\ib}t^\iib}
		= \lim_{t\to\infty} \biggl( \frac{-\log	\Phi(t)-\log N}{\frac{t^2}{2}} \biggr)^\ib
		= 1.
\end{equation*}
In order to compute the second term in expansion \eqref{E:J2}, let us begin
by observing that, thanks to \eqref{E:logPhi} and \eqref{E:power},
\begin{align} \label{apr5}
	\begin{split}
	B^{-1}\left(\frac{1}{\Phi(t)}\right)
		 = ( - \log	\Phi(t) - \log N )^\ib
		&	= \left(\frac{t^2}{2}+\log t + \cdots\right)^\ib
	\\ &	 = 2^{-\ib} t^\iib + \iib 2^{-\ib} t^{\iib-2}\log t + \cdots
			\quad\text{as $t\to\infty$},
	\end{split}
\end{align}
for every $\beta>0$.  Let us now distinguish  the relevant three cases in
equation \eqref{E:J2}. Assume first that $\beta\in(0,2)$. Then
$t^{\iib-1}\to\infty$ as $t\to\infty$ and,  by L'H\^opital's rule and equation
\eqref{apr5},
\begin{equation*}
	\lim_{t\to\infty}
		\frac{J(t) - g(t)}
			{\frac{\beta}{2-\beta}t^{\frac{2}{\beta}-1}\log t}
	= \lim_{t\to\infty}
			\frac {B^{-1}\bigl(\frac{1}{\Phi(t)}\bigr)
				- 2^{-\ib} t^\iib}
				{t^{\iib-2}\log t + \frac{\beta}{2-\beta}t^{\iib-2}} = \iib 2^{-\ib}.
\end{equation*}
If $\beta=2$, then similarly, by \eqref{apr5},
\begin{equation*}
	\lim_{t\to\infty} \frac{J(t) -  \frac{1}{2\sqrt{2}} t^2 }
		{\frac12 (\log t)^2}
		 = \lim_{t\to\infty}
			\frac {B^{-1}\bigl(\frac{1}{\Phi(t)}\bigr)	- \frac{t}{\sqrt{2}}}
				{t^{-1} \log t}
		= \frac1{\sqrt{2}}.
\end{equation*}
Finally, when $\beta\in(2,\infty)$,
\begin{align*}
	\lim_{t\to\infty} \biggl(
		J(t) - 2^{-\ib} \frac{\beta}{2+\beta} t^{\iib+1} \biggr)
		& = \lim_{t\to\infty} \biggl(
			\int_{0}^{t} B^{-1}\biggl( \frac{1}{\Phi(\tau)} \biggr)\,\d \tau
			- 2^{-\ib} \int_{0}^{t} \tau^\iib\,\d \tau\biggr)
		    \\
		    & = \int_{0}^{\infty} \left(
			B^{-1}\left( \frac{1}{\Phi(\tau)} \right)
			-2^{-\ib} \tau^\iib\right)\d \tau,
\end{align*}
where the latter integral converges thanks to \eqref{apr5}.

Let us now focus on  \eqref{E:J2-power}. By equation \eqref{E:F-mB}, the
functional $\FF_{m^B}$ can be written as the sum of three terms. The first one
is, thanks to Lemma~\ref{L:asyJ1}, of a lower order than the  second one.  The
third term is just a constant. Therefore
\begin{equation*}
	\left[\kappab
		\FF_{m^B}(t)
	\right]^\frac{2\beta}{2+\beta}
	=
		\left(
		\kappab 2^{-\frac{1}{\beta}}\frac{\beta}{2+\beta} t^{\frac{2}{\beta}+1}
		+ \kappab\begin{cases}
			2^{-\frac{1}{\beta}} \frac{2}{2-\beta}t^{\frac{2}{\beta}-1}\log t
				&\\
			\frac{1}{2\sqrt{2}} (\log t)^2
				&\\
			c
				&
		\end{cases}
		+ \cdots
		\right)^\frac{2\beta}{2+\beta}
		\quad\text{as $t\to\infty$,}
\end{equation*}
for some constant $c$, where  the three cases in the brace correspond to $\beta
\in (0,2)$, $\beta =2$ and $\beta \in (2, \infty)$, respectively. The
conclusion   follows by \eqref{E:power}.
\end{proof}

The following lemma tells us that, if $\beta \in (2, \infty)$, then a function
$B$ as in Lemma \ref{L:B-integral} can be chosen in such a way that the
constant $c'$ appearing on the right-hand side of equation \eqref{E:J2-power}
attains prescribed negative values.

\begin{lemma} \label{L:B-integral-beta-large}
Assume that $\beta\in(2,\infty)$ and $\lambda>0$.  Then, given any  $N>0$,
there exists a Young function $B$ satisfying condition \eqref{BN}   and such
that
\begin{equation} \label{E:B-integral-lambda}
	\int_{0}^t B^{-1}\left( \frac{1}{\Phi(\tau)} \right)\d \tau
		\le 2^{-\frac{1}{\beta}}\frac{\beta}{2+\beta} t^{\frac{2}{\beta}+1}
			- \lambda + \cdots
		\quad\text{as $t\to\infty$.}
\end{equation}
Consequently, given any $\mu>0$, the function $B$ can be chosen in such a
way that
\begin{equation} \label{E:B-integral-lambda-power}
	\bigl[\kappab
		\FF_{m^{B}}(t)
	\bigr]^\frac{2\beta}{2+\beta}
		\le \frac{t^2}{2}
			- \mu t^{1-\frac{2}{\beta}} + \cdots
		\quad\text{as $t\to\infty$},
\end{equation}
where $\FF_{m^{B}}$ is defined by \eqref{E:F-mB} and
$\kappab$ is given by \eqref{E:kappab}.
\end{lemma}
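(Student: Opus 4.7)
The strategy is to exploit the freedom in the Young function $B$ on the interval $[0,t_0]$, where condition~\eqref{BN} imposes no constraint, to make the constant $c=c(B)$ appearing in the expansion~\eqref{E:J2} arbitrarily negative. Concretely, for a parameter $t_0>0$ to be chosen large, I take $B$ linear on $[0,t_0]$, joining $(0,0)$ to $(t_0,Ne^{t_0^\beta})$, and equal to $Ne^{t^\beta}$ on $(t_0,\infty)$. Young-convexity at the junction reduces to $Ne^{t_0^\beta}/t_0\le N\beta t_0^{\beta-1}e^{t_0^\beta}$, which is automatic as soon as $t_0\ge\beta^{-1/\beta}$; the upper branch is convex because $\beta>2$. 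The generalised inverse is $B^{-1}(s)=t_0 s/(Ne^{t_0^\beta})$ for $s\in[0,Ne^{t_0^\beta}]$ and $B^{-1}(s)=(\log(s/N))^{1/\beta}$ for $s>Ne^{t_0^\beta}$.

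From the proof of Lemma~\ref{L:B-integral}, $c(B)=\int_0^\infty\bigl(B^{-1}(1/\Phi(\tau))-2^{-1/\beta}\tau^{2/\beta}\bigr)\,d\tau$. I set $\tau_0=\Phi^{-1}(e^{-t_0^\beta}/N)$, which by Lemma~\ref{L:Filog} satisfies $\tau_0\sim\sqrt{2}\,t_0^{\beta/2}$ as $t_0\to\infty$, and split this integral at $\tau_0$. On $[\tau_0,\infty)$ the integrand coincides with the asymptotic expression in~\eqref{apr5}, hence is of order $\tau^{2/\beta-2}\log\tau$, which is integrable at infinity since $\beta>2$; therefore the tail vanishes as $t_0\to\infty$. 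On $[0,\tau_0]$ one has $B^{-1}(1/\Phi(\tau))=t_0/(Ne^{t_0^\beta}\Phi(\tau))$, and a L'H\^opital argument combined with~\eqref{E:Phi-prime} gives
\begin{equation*}
\int_0^{\tau_0}\frac{d\tau}{\Phi(\tau)}\sim \sqrt{2\pi}\,e^{\tau_0^2/2}\sim \frac{\sqrt{2\pi}\,Ne^{t_0^\beta}}{\tau_0},
\end{equation*}
so $\int_0^{\tau_0}B^{-1}(1/\Phi(\tau))\,d\tau\sim \sqrt{\pi}\,t_0^{1-\beta/2}\to 0$, while $\int_0^{\tau_0}2^{-1/\beta}\tau^{2/\beta}\,d\tau=\tfrac{2^{-1/\beta}\beta}{2+\beta}\tau_0^{1+2/\beta}\to\infty$. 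Consequently $c(B)\to-\infty$, and choosing $t_0$ so that $c(B)\le-\lambda$ and inserting into~\eqref{E:J2} delivers~\eqref{E:B-integral-lambda}.

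For~\eqref{E:B-integral-lambda-power} I propagate the asymptotics through the map $x\mapsto x^{2\beta/(2+\beta)}$, mimicking the ``Consequently'' step of Lemma~\ref{L:B-integral}. By Lemma~\ref{L:asyJ1} and~\eqref{E:F-mB} the full expansion of $\FF_{m^B}$ reads $At^{1+2/\beta}+\tilde c+\cdots$ with $A=2^{-1/\beta}\beta/(2+\beta)$ and $\tilde c=c(B)+\sqrt{\pi/2}\int_0^1 B^{-1}(1/s)\,ds$; a direct computation with the two-piece formula for $B^{-1}$ shows the second summand is $O(t_0^{\beta+1}e^{-t_0^\beta})$, and hence vanishes as $t_0\to\infty$, so $\tilde c\to-\infty$ together with $c(B)$. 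Using~\eqref{E:power} together with the identity $(\kappab A)^{2\beta/(2+\beta)}=1/2$ (immediate from $\kappab=(2+\beta)/(\sqrt{2}\beta)$), the second-order coefficient in~\eqref{E:J2-power} equals $2^{1/\beta}\tilde c$; therefore, given $\mu>0$, choosing $t_0$ so that $2^{1/\beta}\tilde c\le-\mu$ supplies the required $B$. The main technical obstacle, and the step that genuinely uses the hypothesis $\beta>2$, is the Laplace-type estimate of $\int_0^{\tau_0}d\tau/\Phi(\tau)$: the resulting decay rate $t_0^{1-\beta/2}$ is negative precisely because $\beta>2$, and this is what allows the positive contribution from the linear part of $B$ to be overwhelmed by the $-\tfrac{2^{-1/\beta}\beta}{2+\beta}\tau_0^{1+2/\beta}$ contribution coming from the subtracted term.
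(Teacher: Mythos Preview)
Your argument is correct and follows essentially the same route as the paper: the identical piecewise-linear-then-exponential choice of $B$, the same splitting point $\tau_0=\Phi^{-1}(e^{-t_0^\beta}/N)$, and the same balance showing that the subtracted term $\frac{2^{-1/\beta}\beta}{2+\beta}\tau_0^{1+2/\beta}\to\infty$ dominates while the positive contribution from the linear piece is $O(t_0^{1-\beta/2})\to0$. There is a harmless arithmetic slip in your Laplace estimate (an extra factor $\sqrt{2\pi}$, so the constant should be $1/\sqrt2$ rather than $\sqrt{\pi}$), but this does not affect the conclusion since only the order $t_0^{1-\beta/2}$ matters.
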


\begin{proof}
Fix any $N>0$ and define the function $A\colon[0, \infty) \to [0, \infty)$ as
$A(t)=e^{t^\beta}$ for $t\ge 0$.
Given $t_0>0$, define the function $\AU\colon[0,\infty)\to[0,\infty)$
by
\begin{equation*}
	\AU(t) =
	\begin{cases}
		t\frac{A(t_0)}{t_0}
			& \text{for $t\in[0,t_0)$}
				\\
		A(t)
			& \text{for $t\in[t_0,\infty)$}.
	\end{cases}
\end{equation*}
Thus,
\begin{equation*}
	\AU^{-1}(\tau) =
	\begin{cases}
		\tau\frac{t_0}{A(t_0)}
			& \text{for $\tau\in[0,A(t_0))$}
				\\
		A^{-1}(t)
			& \text{for $\tau\in[A(t_0),\infty)$.}
	\end{cases}
\end{equation*}
Also set $B=N\AU$. Then $B$ is a Young function, provided that
$t_0$ is large enough, and
\begin{equation*}
	B^{-1}(\tau) = \AU^{-1}\left( \tau/N \right)
		\quad\text{for $\tau>0$}.
\end{equation*}
We may assume that $t_0$ is so large   that
$NA(t_0)>2$. Therefore,
\begin{equation*}
	\frac{1}{N\Phi(\tau)}\in [0,A(t_0))
		\quad\text{for $\tau\in [0,\tau(t_0))$,}
\end{equation*}
where we have set
\begin{equation} \label{apr7}
	\tau(t_0) = \Phi^{-1}\left( \frac{1}{NA(t_0)} \right).
\end{equation}
Now, set $\tau_0=0$ if $N \in (0,2]$ and
$\tau_0= \Phi^{-1}(1/N)$ if $N>2$. Then,
\begin{align*}
	\int_{0}^t B^{-1}\left( \frac{1}{\Phi(\tau)} \right)\d \tau
		& = \frac{t_0}{NA(t_0)} \int_{0}^{\tau(t_0)} \frac{\d \tau}{\Phi(\tau)}
			+ \int_{\tau(t_0)}^t \left( \log\frac{1}{N\Phi(\tau)} \right)^\ib\,\d \tau
			\\
		& = \frac{t_0}{NA(t_0)} \int_{0}^{\tau(t_0)} \frac{\d \tau}{\Phi(\tau)}
			+ \int_{\tau_0}^t \left( \log\frac{1}{N\Phi(\tau)} \right)^\ib\,\d \tau
			- \int_{\tau_0}^{\tau(t_0)} \left( \log\frac{1}{N\Phi(\tau)} \right)^\ib\,\d \tau
\end{align*}
for $t>\tau(t_0)$.  Next, by Lemma~\ref{L:B-integral},
\begin{equation} \label{apr11}
	\int_{\tau_0}^t \left( \log\frac{1}{N\Phi(\tau)} \right)^\ib\,\d \tau
		= 2^{-\ib} \frac{\beta}{2+\beta} t^{\iib+1} + c(t),
\end{equation}
where the function $c(t)\to c(\beta,N)$ as $t\to\infty$
and  $c(\beta,N)$ is a constant depending only on $\beta$ and $N$.
Thereby,
\begin{equation} \label{apr6}
	\int_{0}^t B^{-1}\left( \frac{1}{\Phi(\tau)} \right)\d \tau
		= 2^{-\ib} \frac{\beta}{2+\beta} t^{\iib+1}
			+ \lambda(t_0) + c(t),
\end{equation}
where
\begin{equation}\label{june33}
	\lambda(t_0)
		= \frac{t_0}{NA(t_0)} \int_{0}^{\tau(t_0)} \frac{\d \tau}{\Phi(\tau)}
			- \int_{\tau_0}^{\tau(t_0)} \left( \log\frac{1}{N\Phi(\tau)} \right)^\ib\,\d \tau.
\end{equation}
Let us now analyze the asymptotic behavior of $\lambda(t_0)$ as $t_0\to\infty$.
By L'H\^opital's rule,
\begin{equation*}
	\lim_{t\to\infty} \frac{\int_{0}^t \frac{\d \tau}{\Phi(\tau)}}{\frac{1}{t\Phi(t)}}
		= \lim_{t\to\infty} \frac{\frac{1}{\Phi(t)}}
				{\frac{-\Phi(t)-t\Phi'(t)}{t^2\Phi^2(t)}}
		= \lim_{t\to\infty} \frac{1}{-\frac{1}{t^2}-\frac{\Phi'(t)}{t\Phi(t)}}
		= 1,
\end{equation*}
where the last limit  holds thanks to equation \eqref{E:Phi-prime}.
Thus, by \eqref{apr7},
\begin{equation} \label{apr8}
	\frac{t_0}{NA(t_0)} \int_{0}^{\tau(t_0)} \frac{\d \tau}{\Phi(\tau)}
		= \frac{t_0}{\tau(t_0)} + \cdots
		\quad\text{as $t_0\to\infty$.}
\end{equation}
Also, by expansion \eqref{E:logPhi},
\begin{equation} \label{E:Phi-invers}
	\Phi^{-1}(s) = \sqrt{2\log\frac1s} + \cdots
		\quad\text{as $s\to 0^+$.}
\end{equation}
Therefore, by \eqref{apr7} and \eqref{E:Phi-invers},
\begin{equation} \label{apr9}
	\tau(t_0)
		= \Phi^{-1}\left( \frac{1}{N}e^{-t_0^\beta} \right)
		= \sqrt{2} t_0^{\frac{\beta}{2}} + \cdots
			\quad\text{as $t_0\to\infty$}.
\end{equation}
Coupling equations \eqref{apr8} and \eqref{apr9} tells us that
\begin{equation} \label{apr10}
	\frac{t_0}{NA(t_0)} \int_{0}^{\tau(t_0)} \frac{\d \tau}{\Phi(\tau)}
		= \frac{1}{\sqrt{2}} t_0^{1-\frac{\beta}{2}} + \cdots
			\quad\text{as $t_0\to\infty$}.
\end{equation}
Next, by equations \eqref{apr9} and \eqref{apr11},
\begin{align} \label{apr12}
	\begin{split}
	\int_{t_0}^{\tau(t_0)} \left( \log\frac{1}{N\Phi(\tau)} \right)^\ib\,\d \tau
		 = 2^{-\ib} \frac{\beta}{2+\beta} \tau(t_0)^{\iib+1} + \cdots
			 = \sqrt{2} \frac{\beta}{2+\beta} t_0^{\frac{\beta}{2}+1} + \cdots
			\quad\text{as $t_0\to\infty$}.
	\end{split}
\end{align}
Finally, on combining equations \eqref{june33}, \eqref{apr10} and \eqref{apr12} one deduces that
\begin{equation*}
	\lambda(t_0)
		= -\sqrt{2}\frac{\beta}{2+\beta} t_0^{\frac{\beta}{2}+1}
			+ \cdots
			\quad\text{as $t_0\to\infty$.}
\end{equation*}
This shows that   $\lambda(t_0)\to-\infty$ as $t_0\to\infty$.  Now, according
to \eqref{apr6}, given $\lambda>0$, we may choose $t_0$ so large   that
$\lambda(t_0)<-\lambda-c(\beta,N)$. Hence, equation \eqref{E:B-integral-lambda}
follows.

Let us now prove equation \eqref{E:B-integral-lambda-power}.  The function
$\FF_{m^{B}}$ agrees with the sum of the integral \eqref{E:B-integral-lambda}
with two more terms.  The first additional term obeys
\begin{equation*}
	e^{\frac{t^2}{2}} \int_{t}^\infty
		B^{-1}\left( \frac{1}{\Phi(\tau)} \right)
			e^{-\frac{\tau^2}{2}}\,\d \tau
		\to 0
		\quad\text{as $t\to\infty$},
\end{equation*}
by Lemma~\ref{L:asyJ1}, and the second term satisfies
\begin{align*}
	\int_{0}^{1}B^{-1}\left( \frac{1}{s} \right)\d s
		& = \int_{0}^{\frac{1}{NA(t_0)}} A^{-1}\left( \frac{1}{sN} \right)\,\d s
				+ \frac{t_0}{NA(t_0)}\int_{\frac{1}{NA(t_0)}}^1 \frac{\d s}{s}
			\\
		& = \int_{0}^{\frac{1}{NA(t_0)}} \log\left( \frac{1}{sN} \right)^\ib\,\d s
				+ \frac{1}{N}t_0\, e^{-t_0^\beta} \log\left(Ne^{t_0^\beta}\right).
\end{align*}
Note that both addends on the rightmost side of the last equation approach $0$
as $t_0\to\infty$.  Equation  \eqref{E:B-integral-lambda-power} then follows
via \eqref{E:B-integral-lambda} and \eqref{E:power}.
\end{proof}

In the remaining part of this section, we focus on an asymptotic estimate for
the function $\FF_{L^B}$ given by \eqref{E:F-LB}. This is the content of
Lemma~\ref{L:I-norm-sharp}. Its proof in its turn requires some preliminary
asymptotic expansions that are the objective of a few lemmas.

Lemmas~\ref{L:Psi}--\ref{L:a-invers} below are stated without proofs. They can
be derived via simple arguments relying upon L'H\^opital's rule.

\begin{lemma}\label{L:Psi}
Let $\sigma\in[-\tfrac12,\infty)$ and $d>1$.  Define the function
$\Psi_\sigma\colon (d,\infty)\to[0,\infty)$ as
\begin{equation}
\label{E:Psi}
	\Psi_\sigma(t) = \int_{d}^{t} (\tau^2-1)^\sigma\,\d \tau
		\quad\text{for $t>d$.}
\end{equation}
If $\sigma\in(-\frac12,\infty)$, then
\begin{equation}
	\Psi_\sigma(t)
		= \frac{1}{2\sigma+1} t^{2\sigma+1}
			-
			\begin{cases}	
			c + \cdots
				& \text{if $\sigma\in(-\tfrac12,\tfrac12)$}
				\\
			\frac{1}{2}\log t + \cdots
				& \text{if $\sigma=\tfrac12$}
				\\
			\frac{\sigma}{2\sigma-1} t^{2\sigma-1} + \cdots
				& \text{if $\sigma\in(\tfrac12,\infty)$}
			\end{cases}
			\quad\text{as $t\to\infty$,}
\end{equation}
where $c\in\R$ is a constant depending on $\sigma$ and $d$.

If $\sigma=-\frac12$, then
\begin{equation*}
	\Psi_\sigma(t)=\log t + \cdots
		\quad\text{as $t\to\infty$.}
\end{equation*}
\end{lemma}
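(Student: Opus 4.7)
The plan is to apply L'H\^opital's rule term by term, after extracting the dominant polynomial behaviour of $(\tau^2-1)^\sigma$ via the elementary Taylor expansion
\begin{equation*}
	(\tau^2-1)^\sigma = \tau^{2\sigma}(1-\tau^{-2})^\sigma
	= \tau^{2\sigma} - \sigma\,\tau^{2\sigma-2} + O(\tau^{2\sigma-4})
	\quad\text{as } \tau\to\infty.
\end{equation*}
The borderline value $\sigma=-\tfrac12$ I would dispatch first and separately: L'H\^opital applied to the quotient $\Psi_{-1/2}(t)/\log t$ reduces to $\lim_{t\to\infty} t(t^2-1)^{-1/2}=1$, which delivers the stated logarithmic asymptotic.

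For $\sigma\in(-\tfrac12,\infty)$ the leading-order relation $\Psi_\sigma(t)\sim \tfrac{1}{2\sigma+1}t^{2\sigma+1}$ also follows in one step from L'H\^opital, since the ratio of derivatives $(t^2-1)^\sigma/t^{2\sigma}$ tends to $1$. For the subleading term I would then introduce the residual
\begin{equation*}
	R(t) = \Psi_\sigma(t) - \frac{t^{2\sigma+1}}{2\sigma+1},
	\qquad
	R'(t) = (t^2-1)^\sigma - t^{2\sigma} = -\sigma\,t^{2\sigma-2} + O(t^{2\sigma-4}),
\end{equation*}
and split into three subcases driven by the exponent $2\sigma-2$. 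If $\sigma\in(-\tfrac12,\tfrac12)$, then $2\sigma-2<-1$, so $R'$ is integrable at infinity and $R(t)$ converges to a finite constant, identified with the $-c$ in the statement and plainly depending on $\sigma$ and $d$. If $\sigma=\tfrac12$, then $R'(t)\sim -\tfrac{1}{2t}$ and a further application of L'H\^opital to $R(t)\big/\bigl(-\tfrac12\log t\bigr)$ produces the limit $1$. If $\sigma>\tfrac12$, then $R(t)\to\infty$ and L'H\^opital applied to $R(t)\big/\bigl(-\tfrac{\sigma}{2\sigma-1}t^{2\sigma-1}\bigr)$ reduces to the derivative ratio $R'(t)\big/(-\sigma t^{2\sigma-2})$, which again tends to $1$ by the expansion above.

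The only mildly delicate point is justifying cleanly the Taylor remainder $(1-t^{-2})^\sigma = 1 - \sigma t^{-2} + o(t^{-2})$ on each subrange of $\sigma$ so that every L'H\^opital quotient actually converges to $1$; this is standard and follows from the mean value theorem applied to $s\mapsto (1-s)^\sigma$ near $s=0$, and is the one place where the case distinction $\sigma\lessgtr\tfrac12$ genuinely matters, since it determines whether the residual integral $\int^\infty R'$ converges, diverges logarithmically, or diverges polynomially.
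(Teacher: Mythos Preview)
Your proposal is correct and is precisely the approach the paper has in mind: the authors omit the proof entirely, stating only that Lemmas~\ref{L:Psi}--\ref{L:a-invers} ``can be derived via simple arguments relying upon L'H\^opital's rule.'' Your case-by-case treatment of the residual $R(t)=\Psi_\sigma(t)-\tfrac{1}{2\sigma+1}t^{2\sigma+1}$, driven by the integrability of $R'(t)\sim -\sigma t^{2\sigma-2}$, is exactly the routine computation they are skipping.
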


\begin{lemma} \label{L:Upsilon}
Let $\sigma\in[-\tfrac12,\infty)$ and let  $d>1$. Define the
function $\Upsilon_\sigma\colon (d,\infty)\to[0,\infty)$ as
\begin{equation}
\label{E:Upsilon}
	\Upsilon_\sigma(t) = \int_{d}^t (\tau^2-1)^\sigma \log(\tau^2-1)\,\d \tau
		\quad\text{for $t>d$.}
\end{equation}
Then
\begin{equation}
	\Upsilon_\sigma (t)=
		\begin{cases}
			\frac{2}{2\sigma+1} t^{2\sigma+1} \log t
				+ \cdots
				& \text{if $\sigma\in(-\frac12,\infty)$}
				\\
			(\log t)^2
			+ \cdots
				& \text{if $\sigma = -\frac12$}
		\end{cases}
	\quad\text{as $t\to\infty$.}
\end{equation}
\end{lemma}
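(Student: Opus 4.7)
The plan is to verify each of the two claimed asymptotics by a direct application of L'H\^opital's rule to the quotient of $\Upsilon_\sigma(t)$ by the proposed leading term, using the elementary expansions
\begin{equation*}
  (\tau^2-1)^\sigma = \tau^{2\sigma}\bigl(1+O(\tau^{-2})\bigr),
  \qquad
  \log(\tau^2-1) = 2\log\tau + O(\tau^{-2})
  \quad\text{as $\tau\to\infty$,}
\end{equation*}
which follow at once from a first-order Taylor expansion of $\log(1-\tau^{-2})$.

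For the case $\sigma \in (-\tfrac12,\infty)$, I would observe that the denominator $\frac{2}{2\sigma+1}\,t^{2\sigma+1}\log t$ tends to $\infty$, and that the numerator $\Upsilon_\sigma(t)$ does too (since its integrand is eventually positive of order $\tau^{2\sigma}\log\tau$ with $2\sigma > -1$). The L'H\^opital quotient of derivatives is
\begin{equation*}
  \frac{(t^2-1)^\sigma\log(t^2-1)}{2\, t^{2\sigma}\log t + \frac{2}{2\sigma+1}t^{2\sigma}},
\end{equation*}
and the displayed asymptotics for $(t^2-1)^\sigma$ and $\log(t^2-1)$ show the numerator equals $2\,t^{2\sigma}\log t\,(1+o(1))$, while the denominator is $2\,t^{2\sigma}\log t\,(1+o(1))$ since $\frac{2}{2\sigma+1}t^{2\sigma} = o\bigl(t^{2\sigma}\log t\bigr)$. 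Hence the limit is $1$, as required.

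For the case $\sigma = -\tfrac12$, both $\Upsilon_{-1/2}(t)$ and $(\log t)^2$ tend to $\infty$. Applying L'H\^opital once more, the quotient of derivatives is
\begin{equation*}
  \frac{(t^2-1)^{-1/2}\log(t^2-1)}{2\log t / t} = \frac{t\,(t^2-1)^{-1/2}\log(t^2-1)}{2\log t},
\end{equation*}
which tends to $1$ by the same elementary asymptotics, since $t(t^2-1)^{-1/2}\to 1$ and $\log(t^2-1)/(2\log t)\to 1$.

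There is essentially no obstacle here: the argument is entirely mechanical once one recognizes that the relevant ratios reduce, via L'H\^opital, to elementary limits of the form $(t^2-1)^\sigma/t^{2\sigma}$ and $\log(t^2-1)/(2\log t)$. The only minor point of care is to check that the proposed leading term is indeed the derivative of an antiderivative whose non-leading pieces are absorbed into the $\cdots$ remainder, which is why I chose to keep track of the $\frac{2}{2\sigma+1}t^{2\sigma}$ correction in the derivative of the denominator and verify it is lower order than $t^{2\sigma}\log t$.
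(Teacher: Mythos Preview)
Your proof is correct and matches the paper's approach: the paper does not give a detailed proof of this lemma, but states explicitly that it ``can be derived via simple arguments relying upon L'H\^opital's rule,'' which is precisely what you have done.
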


\begin{lemma} \label{L:a-invers}
Let $B$ be a Young function obeying \eqref{BN} for some $N>0$, and let
$b\colon[0,\infty)\to[0,\infty)$ be the left-continuous function such that
$B(t)=\int_{0}^t b(\tau)\,\d\tau$ for $t>0$.  Then
\begin{equation*} 
	b^{-1}(t)
		= (\log t)^\frac{1}{\beta}
		+ \frac{1-\beta}{\beta^2}
			(\log t)^{\frac{1}{\beta}-1}\log\log t
		+ \cdots
			\quad\text{as $t\to\infty$}.
\end{equation*}
\end{lemma}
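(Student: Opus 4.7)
The plan is to work directly from the explicit form of $b$ near infinity and then asymptotically invert the resulting implicit equation. Since $B$ satisfies \eqref{BN}, we have $B(t) = N e^{t^\beta}$ for $t > t_0$, and hence the left-continuous derivative $b$ satisfies
\begin{equation*}
	b(t) = N\beta\, t^{\beta - 1} e^{t^\beta}
		\quad\text{for $t > t_0$.}
\end{equation*}
Writing $s = b(t)$ and taking logarithms yields the implicit equation
\begin{equation*}
	t^\beta = \log s - (\beta-1)\log t - \log(N\beta),
\end{equation*}
valid for $s$ sufficiently large. Solving for $t$ gives
\begin{equation*}
	t = (\log s)^{1/\beta}
		\left(1 - \frac{(\beta-1)\log t + \log(N\beta)}{\log s}\right)^{1/\beta}.
\end{equation*}

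The second step is to bootstrap. The leading-order behavior $t \sim (\log s)^{1/\beta}$ as $s \to \infty$ is immediate from the above identity, since the bracketed factor tends to $1$. Feeding this back gives $\log t = \frac{1}{\beta}\log\log s + \cdots$ as $s \to \infty$, which I then substitute into the parenthetical factor and expand by the binomial formula $(1+x)^{1/\beta} = 1 + x/\beta + O(x^2)$. After this one iteration,
\begin{equation*}
	t = (\log s)^{1/\beta}
		- \frac{\beta-1}{\beta^2}\,(\log s)^{1/\beta - 1}\log\log s
		+ \cdots
		\quad\text{as $s \to \infty$,}
\end{equation*}
and renaming $s$ as $t$ (and $t$ as $b^{-1}(t)$) yields the claimed expansion.

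The only delicate point is to verify that the error terms swept into the dots are genuinely of smaller order than $(\log t)^{1/\beta - 1}\log\log t$. This reduces to checking that both the constant contribution $\log(N\beta)/\log s$ inside the bracket and the quadratic term in the binomial expansion produce contributions of order $(\log s)^{1/\beta - 1}$ (without the $\log\log s$ factor) or smaller, which is straightforward since $\log\log s \to \infty$. Apart from this bookkeeping, the argument is a routine application of L'H\^opital's rule in the form already employed in Lemmas~\ref{L:Psi}--\ref{L:Upsilon}, and I expect no substantial obstacle.
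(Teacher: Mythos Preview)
Your approach is correct and essentially what the paper has in mind: the paper omits the proof entirely, stating only that Lemmas~\ref{L:Psi}--\ref{L:a-invers} ``can be derived via simple arguments relying upon L'H\^opital's rule.'' Your method---writing $b(t)=N\beta t^{\beta-1}e^{t^\beta}$, taking logarithms, and bootstrapping the resulting implicit equation---is the natural way to carry this out, and the verification of the two limits required by the paper's asymptotic notation (namely $b^{-1}(t)/(\log t)^{1/\beta}\to 1$ and the corresponding second-order limit) falls out exactly as you describe.
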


\begin{lemma} \label{L:Orlicz-norm-inf}
Let $A$ be a Young function and let $(\RR, \nu)$ be a probability space. Then
\begin{equation} \label{E:Orlicz-norm-inf}
	\opnorm{\phi}_{L^{A}}
		\le \inf_{k>0}
			\left\{
				\frac{1}{k}+\frac{1}{k}\int_{\RR} A(k|\phi |)\,\d\nu
			\right\}.
\end{equation}
\end{lemma}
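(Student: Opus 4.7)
The plan is to derive the bound directly from Young's inequality \eqref{E:Young-ineq} combined with the definition of the Orlicz norm as a supremum.

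Fix any $k>0$ and any $\psi\in L^{\tilde A}(\RR)$ with $\int_{\RR}\tilde A(|\psi|)\,\d\nu\le 1$. I would apply Young's inequality to the factorization $\phi\psi=\frac{1}{k}(k|\phi|)|\psi|\,\sgn(\phi\psi)$, obtaining pointwise
\begin{equation*}
	|\phi\psi|
		\le \frac{1}{k}\bigl(A(k|\phi|)+\tilde A(|\psi|)\bigr).
\end{equation*}
Integrating over $\RR$ and using the constraint on $\psi$ gives
\begin{equation*}
	\int_{\RR}\phi\psi\,\d\nu
		\le \int_{\RR}|\phi\psi|\,\d\nu
		\le \frac{1}{k}\int_{\RR} A(k|\phi|)\,\d\nu + \frac{1}{k}\int_{\RR}\tilde A(|\psi|)\,\d\nu
		\le \frac{1}{k}+\frac{1}{k}\int_{\RR} A(k|\phi|)\,\d\nu.
\end{equation*}

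Since the right-hand side does not depend on $\psi$, taking the supremum over all admissible $\psi$ in the definition of $\opnorm{\,\cdot\,}_{L^{A}(\RR)}$ yields
\begin{equation*}
	\opnorm{\phi}_{L^A(\RR)}
		\le \frac{1}{k}+\frac{1}{k}\int_{\RR} A(k|\phi|)\,\d\nu
\end{equation*}
for every $k>0$. Taking the infimum over $k>0$ completes the proof. There is no real obstacle here: the statement is just the easy half of the Amemiya formula for the Orlicz norm, and the whole argument rests on one application of Young's inequality.
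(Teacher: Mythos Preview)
Your proof is correct and follows essentially the same route as the paper: apply Young's inequality $|\phi\psi|\le\frac{1}{k}\bigl(A(k|\phi|)+\tilde A(|\psi|)\bigr)$, integrate, use the constraint $\int_\RR\tilde A(|\psi|)\,\d\nu\le1$, and take the supremum over admissible $\psi$ followed by the infimum over $k>0$.
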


\begin{proof}
By Young's inequality \eqref{E:Young-ineq},
\begin{equation*}
	\int_{\RR} |\phi\psi|\,\d\nu
		\le \frac{1}{k} \int_{\RR} \tilde A(|\psi|)\,\d\nu
			+ \frac{1}{k} \int_{\RR} A(k|\phi|)\,\d\nu \quad \text{for $k>0$.}
\end{equation*}
Therefore
\begin{equation*}
	\opnorm{\phi}_{L^{A}}
		= \sup\biggl\{ \int_{\RR} |\phi\psi|\,\d\nu:
				\int_{\RR} \tilde A(|\psi|)\,\d\nu \le 1\biggr\}
		\le \frac{1}{k} + \frac{1}{k} \int_{\RR} A(k|\phi|)\,\d\nu \quad \text{for $k>0$,}
\end{equation*}
whence \eqref{E:Orlicz-norm-inf} follows.
\end{proof}

\begin{lemma} \label{L:I-norm-sharp}
Let $B$ be a Young function satisfying condition \eqref{BN} for some constant  $N>0$.
Then
\begin{equation} \label{E:I-norm-bound}
		\begin{split}
		\normIB
			\le  2^{-\ib} \frac{\beta}{2+\beta} t^{\iib+1}
				- 2^{-\ib}
				\begin{cases}
					\frac{2}{2-\beta}t^{\iib-1}\log t + \cdots
						& \text{if $\beta\in(0,2)$}
						\\
					\frac{1}{2}(\log t)^2 + \cdots
						& \text{if $\beta=2$}
						\\
					c + \cdots
						& \text{if $\beta\in(2,\infty)$}
				\end{cases}
		\quad\text{as $t\to\infty$,}
	\end{split}
\end{equation}
for a suitable  constant $c=c(B)\in\R$.
Consequently,
\begin{equation} \label{E:I-norm-power}
	\bigl[ \kappab \FF_{L^B}(t) \bigr]^\frac{2\beta}{2+\beta}
		\le \frac{t^2}{2} -
		\begin{cases}
			\frac{2}{2-\beta}\log t + \cdots
				& \text{if $\beta\in(0,2)$}
				\\
			\frac{1}{2}(\log t)^2 + \cdots
				& \text{if $\beta=2$}
				\\
			c'\, t^{1-\frac{2}{\beta}} + \cdots
				& \text{if $\beta\in(2,\infty)$}
		\end{cases}
		\quad\text{as $t\to\infty$},
\end{equation}
where $\FF_{L^B}$ is defined by \eqref{E:F-LB} and $c'=c'(B)\in\R$ is a suitable constant.
\end{lemma}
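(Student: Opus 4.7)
The plan is to invoke Lemma \ref{L:Orlicz-norm-inf} with $A=\tilde B$ (so that $\tilde A = B$), which yields
\begin{equation*}
	\normIB \le \inf_{k>0}\left\{\frac{1}{k}+\frac{1}{k}\int_{\Phi(t)}^{\frac12}\tilde B\!\left(\frac{k}{I(s)}\right)\d s\right\}.
\end{equation*}
The substitution $s=\Phi(\tau)$, which by \eqref{june30} satisfies $\d s = -(2\pi)^{-1/2}e^{-\tau^2/2}\,\d\tau$ and $1/I(s) = \sqrt{2\pi}\,e^{\tau^2/2}$, converts the integral into
\begin{equation*}
	\int_0^t \frac{1}{\sqrt{2\pi}}\,e^{-\tau^2/2}\,\tilde B\bigl(k\sqrt{2\pi}\,e^{\tau^2/2}\bigr)\d\tau,
\end{equation*}
reducing matters to the asymptotics of $\tilde B\bigl(k\sqrt{2\pi}\,e^{\tau^2/2}\bigr)$ as $\tau\to\infty$.

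The next step is to expand $\tilde B$ at infinity. Combining the identity $\tilde B(y)=\int_0^y b^{-1}(\eta)\,\d\eta$ with the expansion of $b^{-1}$ from Lemma \ref{L:a-invers}, one obtains an asymptotic formula of the form
\begin{equation*}
	\tilde B(y) = y(\log y)^{1/\beta} + \tfrac{1-\beta}{\beta^2}\, y(\log y)^{1/\beta-1}\log\log y + \cdots \quad\text{as } y\to\infty.
\end{equation*}
Substituting $y = k\sqrt{2\pi}\,e^{\tau^2/2}$ and expanding $\log y = \tau^2/2 + \log(k\sqrt{2\pi})$, the integrand acquires a leading term behaving like $k(\tau^2/2)^{1/\beta}$ and correction terms of the form $(\tau^2)^{1/\beta-1}$ multiplied by $\log\tau$ or constants. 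After splitting the $\tau$-integration into a bounded piece on $(0,d)$ with $d>1$, which contributes a harmless constant, and an integral on $(d,t)$ in which $\tau^{2}$ may be replaced by $\tau^{2}-1$ without altering the asymptotics, the relevant integrals fall precisely into the form covered by Lemmas \ref{L:Psi} and \ref{L:Upsilon}. The leading contribution is
\begin{equation*}
	k\int_0^t(\tau^2/2)^{1/\beta}\,\d\tau = 2^{-1/\beta}\frac{\beta}{\beta+2}\,k\,t^{1+2/\beta}+\cdots,
\end{equation*}
which, after dividing by $k$ and letting $k\to\infty$ at a rate that keeps both $1/k$ and any $k$-dependent remainders below the target order, reproduces the leading term of \eqref{E:I-norm-bound}.

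The first correction must then be tracked separately in the three regimes $\beta\in(0,2)$, $\beta=2$, $\beta\in(2,\infty)$. The subcritical case $\beta<2$ produces, via Lemma \ref{L:Psi}, a $t^{2/\beta-1}\log t$ correction; the critical $\beta=2$ forces a logarithmic endpoint in Lemma \ref{L:Upsilon}, yielding $(\log t)^2$; and the supercritical $\beta>2$ gives convergent integrals, leaving only a constant. This matches the right-hand side of \eqref{E:I-norm-bound}. Finally, \eqref{E:I-norm-power} follows from \eqref{E:I-norm-bound} by applying \eqref{E:power} with $\sigma=2\beta/(2+\beta)$: the additive constant $\sqrt{\pi/2}\,B^{-1}(1)$ in \eqref{E:F-LB} is of lower order, and the identity $\bigl(\kappab\cdot 2^{-1/\beta}\beta/(\beta+2)\bigr)^{2\beta/(2+\beta)}=1/2$ promotes the leading $t^{1+2/\beta}$ to $t^2/2$. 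The principal obstacle is the second step: tracking carefully the lower-order contributions coming simultaneously from the expansion of $\tilde B$, from $\log y$, and from the parameter $k$, and showing that they combine into the single correction stated in \eqref{E:I-norm-bound} for each $\beta$.
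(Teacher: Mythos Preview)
Your overall framework matches the paper's: apply Lemma~\ref{L:Orlicz-norm-inf}, change variables $s=\Phi(\tau)$, expand $\tilde B$ via Lemma~\ref{L:a-invers}, and feed the resulting integrals into Lemmas~\ref{L:Psi} and~\ref{L:Upsilon}. The gap is in the treatment of the parameter $k$, and it is not a minor one.

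You propose to let $k\to\infty$. This cannot work. After cancelling the factor $k\sqrt{2\pi}\,e^{\tau^2/2}$ against $e^{-\tau^2/2}/(k\sqrt{2\pi})$, the leading part of the integrand is $(\tau^2/2+L)^{1/\beta}$ with $L=\log(k\sqrt{2\pi})$. If $k\to\infty$ then $L\to+\infty$, and $\int_0^t(\tau^2/2+L)^{1/\beta}\,\d\tau$ is strictly \emph{larger} than $\int_0^t(\tau^2/2)^{1/\beta}\,\d\tau$; the bound gets worse, not better. Even with $k$ fixed, the only $t^{2/\beta-1}\log t$ contribution comes from the $\log\log y$ term of $\tilde B$, and its coefficient is $2^{-1/\beta}\frac{4(1-\beta)}{\beta(2-\beta)}$, which does not equal the required $-2^{-1/\beta}\frac{2}{2-\beta}$ (for $\beta=1$ it is $0$, not $-2$). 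The negative second-order term in \eqref{E:I-norm-bound} can only be reached by pushing $k$ toward $0$, so that $L<0$ shaves the integrand.

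The paper makes the explicit choice $k=\frac{1}{\sqrt{2\pi}}e^{-\sigma(t)^2/2}$ with a $\beta$-dependent $\sigma(t)$ (growing like $\sqrt{\log t}$ for $\beta<2$, like $\sqrt{\log\log t}$ for $\beta=2$, constant for $\beta>2$). This turns the integrand into $\bigl((\tau^2-\sigma(t)^2)/2\bigr)^{1/\beta}$ on $(\sigma(t),t)$, and the rescaling $\tau=\sigma(t)\rho$ then produces exactly the $(\rho^2-1)^\sigma$ forms of Lemmas~\ref{L:Psi} and~\ref{L:Upsilon}, evaluated at $t/\sigma(t)$. The second term of the $\Psi_{1/\beta}$ expansion, multiplied by $\sigma(t)^{2/\beta+1}$, is what generates the extra $\log t$ (via $\sigma(t)^2\sim\log t$) and combines with the $J_2$ contribution to give the correct coefficient. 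Your suggestion to ``replace $\tau^2$ by $\tau^2-1$ without altering the asymptotics'' misses this scaling entirely: the $-1$ in $(\rho^2-1)$ is not a perturbation of $\tau^2$ but the residue of the $t$-dependent shift $-\sigma(t)^2$, and it is precisely that shift which manufactures the correction term. Without specifying $k=k(t)\to 0$ at the right rate, the argument does not reach \eqref{E:I-norm-bound}.
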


\begin{proof}
By Lemma~\ref{L:Orlicz-norm-inf},
\begin{equation*}
	\normIB
		\le \inf_{k>0}
		\left\{
			\frac{1}{k}+\frac{1}{k}
				\int_{\Phi(t)}^{\frac12} \tilde{B}\left( \frac{k}{I(s)} \right)\d s
		\right\},
\end{equation*}
whence, by the change of variables $s=\Phi(\tau)$,
\begin{equation} \label{feb1}
	\normIB
		\le \inf_{k>0}
			\left\{
				\frac{1}{k}
				+ \frac{1}{k\sqrt{2\pi}} \int_{0}^t
					\tilde B\left( k\sqrt{2\pi} e^{\frac{\tau^2}{2}} \right)
					e^{-\frac{\tau^2}{2}}\,\d \tau
			\right\}.
\end{equation}
Choose $k=\frac{1}{\sqrt{2\pi}}e^{-\frac{\sigma(t)^2}{2}}$ in the expression in
braces on the right-hand side of inequality \eqref{feb1}, where  $\sigma\colon(e,
\infty) \to (0, \infty)$  is the function defined as
\begin{equation} \label{E:tau}
	\sigma(t) =
	\begin{cases}
		\sqrt{2\Bigl(\frac{2}{\beta}-1\Bigr)\log t},
			& \text{if $\beta\in(0,2)$}
			\\
		\sqrt{2\log\log t}
			& \text{if $\beta=2$}
			\\
		1
			& \text{if $\beta\in(2,\infty)$}
	\end{cases}
	\quad\text{for $t>e$}.
\end{equation}
Notice that
\begin{equation}\label{july10}
	\lim_{t \to \infty} \frac{t}{\sigma(t)}
		= \infty.
\end{equation}
This choice of $k$ in \eqref{feb1} yields
\begin{equation} \label{sep2}
	\normIB
		\le \sqrt{2\pi} e^{\frac{\sigma(t)^2}{2}}
			+ e^{\frac{\sigma(t)^2}{2}} \int_{0}^t
				\tilde B\Bigl( e^{\frac{\tau^2}{2}-\frac{\sigma(t)^2}{2}} \Bigr)
				e^{-\frac{\tau^2}{2}}\,\d \tau
		\quad\text{for $t>e$}.
\end{equation}
Owing to Lemma~\ref{L:a-invers} and to the fact that $\tilde B(t)\le
tb^{-1}(t)$ for $t>0$, one has that
\begin{equation*}
	\tilde B(t)
		\le t(\log t)^\ib
			+ \frac{1-\beta}{\beta^2}
				t(\log t)^{\ib-1}\log\log t
			+ \cdots
		\quad\text{as $t\to\infty$.}
\end{equation*}
Consequently, given $\varepsilon>0$,
there exists a constant $C>0$ such that
\begin{equation}
\label{E:A-conjugate-bound-C}
	\tilde B(t)
		\le t(\log t)^\ib
			+ K_\varepsilon \frac{1-\beta}{\beta^2}
				t(\log t)^{\ib-1}\log\log t
			+ C
		\quad\text{for $t>e$,}
\end{equation}
where  $K_\varepsilon=1+\varepsilon$ if $\beta\in(0,1]$ and
$K_\varepsilon=1-\varepsilon$ if $\beta\in(1,\infty)$.  On enlarging, if
necessary, the value of $C$, we may also assume   that $\tilde B(t)\le C$ for
$t\in(0,e)$.  From inequalities  \eqref{sep2} and
\eqref{E:A-conjugate-bound-C}, we infer that
\begin{equation} \label{E:I-norm-upper-bound}
	\normIB
		 \le \sqrt{2\pi}(1+C) e^{\frac{\sigma(t)^2}{2}}
			+ J_1(t)
			+ K_\varepsilon \frac{1-\beta}{\beta^2} J_2(t),
\end{equation}
where we have set
\begin{equation*} 
	J_1(t)
		= \int_{\sigma(t)}^t
			\left( \frac{\tau^2}{2}-\frac{\sigma(t)^2}{2} \right)^\ib \!\d \tau
\end{equation*}
and
\begin{equation*} 
	J_2(t)
		= \int_{2\sigma(t)}^t
			\left( \frac{\tau^2}{2}-\frac{\sigma(t)^2}{2} \right)^{\ib-1}
				\!\log\left( \frac{\tau^2}{2}-\frac{\sigma(t)^2}{2} \right) \d \tau
\end{equation*}
for $t>e$.  Let us estimate the term $J_1$.  By a change of variables, we
obtain that
\begin{align} \label{E:I1-homo}
	\begin{split}
	J_1(t)
		& = \int_{1}^{\frac{t}{\sigma(t)}}
			\left( \frac{\sigma(t)^2}{2} \tau^2 - \frac{\sigma(t)^2}{2} \right)^\ib \sigma(t)\,\d \tau
			= 2^{-\ib} \sigma(t)^{\iib+1}
			\int_{1}^{\frac{t}{\sigma(t)}} (\tau^2-1)^\frac1\beta \,\d \tau
			\\
		& = 2^{-\ib} \sigma(t)^{\iib+1}
			\Psi_{\frac1\beta}\Bigl( \frac{t}{\sigma(t)} \Bigr)
	\quad\text{for $t>e$},
	\end{split}
\end{align}
where $\Psi_{\frac1\beta}$ is defined as in  \eqref{E:Psi}.
From Lemma~\ref{L:Psi} and equation \eqref{july10}  one can infer that
\begin{equation} \label{E:Psi-ib}
	\begin{split}
	\Psi_{\frac1\beta}\Bigl( \frac{t}{\sigma(t)} \Bigr)
		= \frac{\beta}{2+\beta} \Bigl(\frac{t}{\sigma(t)} \Bigr)^{\iib+1}
			-
			\begin{cases}
			\dfrac{1}{2-\beta} \Bigl(\dfrac{t}{\sigma(t)} \Bigr)^{\iib-1} + \cdots
				& \text{if $\beta\in(0,2)$}
				\\
			\frac{1}{2}\log\dfrac{t}{\sigma(t)} + \cdots
				& \text{if $\beta=2$}
				\\
			c_1 + \cdots
				& \text{if $\beta\in(2,\infty)$}
			\end{cases}
	\quad\text{as $t\to\infty$.}
	\end{split}
\end{equation}
Coupling  \eqref{E:I1-homo} with \eqref{E:Psi-ib} yields
\begin{equation} \label{E:I1-bound}
	J_1(t)
		= 2^{-\ib}\frac{\beta}{2+\beta} t^{\iib+1}
			-2^{-\ib}
			\begin{cases}
				\frac{1}{2-\beta} t^{\iib-1} \sigma(t)^2 + \cdots
					& \text{if $\beta\in(0,2)$}
					\\
				\frac{1}{2}\sigma(t)^2 \log\dfrac{t}{\sigma(t)} + \cdots
					& \text{if $\beta=2$}
					\\
				c_1 \sigma(t)^{\iib+1} + \cdots
					& \text{if $\beta\in(2,\infty)$}
			\end{cases}
	\quad\text{as $t\to\infty$.}
\end{equation}
Equation \eqref{E:tau} and   estimates \eqref{E:I1-bound}  tell us that
\begin{equation} \label{E:I1-bound-final}
	J_1(t)
		= 2^{-\ib}\frac{\beta}{2+\beta} t^{\iib+1}
			-2^{-\ib}
			\begin{cases}
				\iib t^{\iib-1} \log t + \cdots
					& \text{if $\beta\in(0,2)$}
					\\
				\frac{1}{2}\log t\log\log t + \cdots
					& \text{if $\beta=2$}
					\\
				c_1 + \cdots
					& \text{if $\beta\in(2,\infty)$}
			\end{cases}
	\quad\text{as $t\to\infty$}.
\end{equation}
Let us next consider $J_2$. If $\beta\in(0,2]$, then, by a change of variables,
\begin{align*}
	J_2(t)
		& = \sigma(t) \left( \frac{\sigma(t)^2}{2} \right)^{\frac{1}{\beta}-1}
			\int_{2}^\frac{t}{\sigma(t)} (\tau^2-1)^{\frac{1}{\beta}-1}
				\left( \log\frac{\sigma(t)^2}{2} + \log(\tau^2-1) \right)\d \tau
			\\
		& = 2^{1-\ib}\sigma(t)^{\frac{2}{\beta}-1}
				\left(
					\log\frac{\sigma(t)^2}{2}\,\Psi_{\ib-1}\Bigl( \frac{t}{\sigma(t)}\Bigr)
					+ \Upsilon_{\ib-1}\Bigl( \frac{t}{\sigma(t)}\Bigr)
				\right)
	\quad\text{for $t>e$}.
\end{align*}
Here, $\Upsilon_{\ib-1}$ is defined according to  \eqref{E:Upsilon}.  Thanks to
Lemmas~\ref{L:Psi} and~\ref{L:Upsilon} and equation \eqref{july10} one has
that
\begin{equation*}
	\Psi_{\ib-1}\Bigl( \frac{t}{\sigma(t)}\Bigr) =
		\begin{cases}
			\frac{\beta}{2-\beta}\Bigl( \dfrac{t}{\sigma(t)}\Bigr)^{\iib-1}
			+ \cdots &\text{if $\beta\in(0,2)$}
				\\
			\log \dfrac{t}{\sigma(t)}+\cdots
			& \text{if $\beta=2$}
		\end{cases}
	\quad\text{as $t\to\infty$}
\end{equation*}
and
\begin{equation*}
	\Upsilon_{\ib-1}\Bigl( \frac{t}{\sigma(t)}\Bigr) =
			\begin{cases}
				\frac{2\beta}{2-\beta}\Bigl( \dfrac{t}{\sigma(t)}\Bigr)^{\iib-1}
				\log\dfrac{t}{\sigma(t)}+ \cdots
				& \text{if $\beta\in(0,2)$}
					\\
				\Bigl(\log \dfrac{t}{\sigma(t)}\Bigr)^2 + \cdots
				& \text{if $\beta=2$}
 			\end{cases}
	\quad\text{as $t\to\infty$.}
\end{equation*}
Consequently, on making use of  \eqref{E:tau}, one deduces that
\begin{equation} \label{E:I2-bound-final}
	J_2(t) =
	\begin{cases}
		2^{-\ib}\frac{4\beta}{2-\beta}t^{\iib-1}\log t + \cdots
			& \text{if $\beta\in(0,2)$}
				\\
		2^{\frac{1}{2}}(\log t)^2 + \cdots
			& \text{if $\beta=2$}
	\end{cases}
	\quad\text{as $t\to\infty$.}
\end{equation}
If $\beta\in(2,\infty)$, one can verify that  $J_2(t)$ has a finite limit
 as $t\to\infty$.
Furthermore,
\begin{equation} \label{E:exp-tau}
	e^{\frac{\sigma(t)^2}{2}}
		=
		\begin{cases}
			t^{\iib-1}
				& \text{if $\beta\in(0,2)$}
				\\
			\log t
				& \text{if $\beta=2$}
				\\
			e^{\frac{1}{2}}
				& \text{if $\beta\in(2,\infty)$}
		\end{cases}
	\quad\text{for $t>e$}.
\end{equation}
Therefore, equations \eqref{E:I-norm-upper-bound}, \eqref{E:I1-bound-final},
\eqref{E:I2-bound-final} and \eqref{E:exp-tau} enable us to conclude that, if $\beta\in(0,1]$, then
\begin{align*}
	\normIB
		& \le 2^{-\ib} \frac{\beta}{2+\beta} t^{\iib+1}
			-	2^{-\ib} \iib t^{\iib-1} \log t
			+ (1+\varepsilon)\frac{1-\beta}{\beta^2}
				2^{-\ib} \frac{4\beta}{2-\beta}t^{\iib-1}
			\log t
			+ \cdots
		\\
		& = 2^{-\ib} \frac{\beta}{2+\beta} t^{\iib+1}
			-	2^{-\ib} \left(\frac{2}{2-\beta} - \frac{4\varepsilon(1-\beta)}{\beta(2-\beta)}\right) t^{\iib-1} \log t
			+ \cdots
	\quad\text{as $t\to\infty$,}
\end{align*}
 and that, if $\beta\in(1,2)$, then
\begin{equation*}
	\normIB
		 \le 2^{-\ib} \frac{\beta}{2+\beta} t^{\iib+1}
			-	2^{-\ib} \left(\frac{2}{2-\beta} + \frac{4\varepsilon(1-\beta)}{\beta(2-\beta)}\right) t^{\iib-1} \log t
		+ \cdots
	\quad\text{as $t\to\infty$.}
\end{equation*}
Thus, thanks to the arbitrariness of $\varepsilon$,
equation \eqref{E:I-norm-bound} follows in the case when $\beta \in (0,2)$.
If $\beta=2$, then, by \eqref{E:I-norm-upper-bound}, \eqref{E:I1-bound-final},
\eqref{E:I2-bound-final} and \eqref{E:exp-tau},
\begin{equation*}
	\normIB
		\le 2^{-\frac32}t^2
			- \frac{1+\varepsilon}{4}2^{\frac{1}{2}}(\log t)^2
			+ \cdots
	\quad\text{as $t\to\infty$.}
\end{equation*}
Hence, the arbitrariness of $\varepsilon$ enables us to deduce
\eqref{E:I-norm-bound} for $\beta =2$.  Finally, If $\beta\in(2,\infty)$, then
equation  \eqref{E:I-norm-bound} holds by \eqref{E:I-norm-upper-bound},
\eqref{E:I1-bound-final}, and by the boundedness of $J_2$.

By definition \eqref{E:F-LB}, estimate \eqref{E:I-norm-power}  follows from
equations \eqref{E:I-norm-bound} and \eqref{E:power}.
\end{proof}

\section{Proofs of the main results} \label{SE:proofs}

The proofs of our main results  exploit relations between the assumption in
integral form \eqref{E:integral-form-M} and that in norm form
\eqref{E:nabla-Orlicz}, and  the relations in \eqref{AtoM} between strong and
weak norms. Some steps of such proofs are stated as separate intermediate
results, in order to avoid unnecessary repetitions.

The next three lemmas provide us with links between conditions
\eqref{E:integral-form-M} and \eqref{E:nabla-Orlicz}.

\begin{lemma} \label{L:modular-implies-norm-to-B-is-M}
Let $\beta >0$.  Assume that $B$ is a Young function satisfying condition
\eqref{BN} for some $N\in(0,1)$.  Then there exists a constant $M>1$ such that
\begin{equation} \label{E:phi-norm-1}
	\|\phi\|_{L^B\RG} \le 1
\end{equation}
for every function $\phi\in\MM(\rn)$ fulfilling
\begin{equation} \label{E:phi-modular-M}
	\int_{\rn} e^{|\phi|^\beta}\dgn \le M.
\end{equation}
\end{lemma}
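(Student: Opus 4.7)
The strategy is to reformulate $\|\phi\|_{L^B\RG}\le 1$ as the modular bound $\int_{\rn} B(|\phi|)\,\dgn \le 1$ (the two are equivalent since $B$ is finite-valued, via monotone convergence as $\lambda\searrow 1$ in the Luxemburg definition), and then to produce $M>1$ for which the exponential hypothesis forces the modular below $1$. I would split the modular at $t_0$: on $\{|\phi|>t_0\}$ the formula \eqref{BN} identifies $B(|\phi|)$ with $Ne^{|\phi|^\beta}$, so this contribution is at most $N\int_{\rn} e^{|\phi|^\beta}\dgn \le NM$. On $\{|\phi|\le t_0\}$ the crude pointwise bound $B\le B(t_0)$ is too wasteful; I would instead invoke the chord inequality coming from convexity of $B$ with $B(0)=0$,
$$B(t)\le \tfrac{t}{t_0}B(t_0)=\tfrac{Ne^{t_0^\beta}}{t_0}t\quad\text{for }t\in[0,t_0],$$
which reduces this piece to $(Ne^{t_0^\beta}/t_0)\,\|\phi\|_{L^1\RG}$.

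The heart of the argument is then to show that $\|\phi\|_{L^1\RG}$ can be made arbitrarily small by choosing $M$ close to $1$. Expanding the exponential in power series and matching the $k$th term against $\int_{\rn}e^{|\phi|^\beta}\dgn - 1 \le M - 1$ yields
$$\int_{\rn} |\phi|^{k\beta}\,\dgn \le k!\,(M-1)\quad\text{for every }k\in\N.$$
For $\beta\ge 1$, Jensen's inequality applied to the convex map $x\mapsto x^\beta$ on the probability space $\RG$ gives $\|\phi\|_{L^1\RG}\le (M-1)^{1/\beta}$. For $\beta\in(0,1)$ I would split $\|\phi\|_{L^1\RG}$ at the level $|\phi|=1$: on $\{|\phi|\le 1\}$ use $|\phi|\le |\phi|^\beta$, while on $\{|\phi|>1\}$ use $|\phi|\le|\phi|^{k\beta}$ with $k=\lceil 1/\beta\rceil$, producing $\|\phi\|_{L^1\RG}\le (1+k!)(M-1)$. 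In both regimes the bound on $\|\phi\|_{L^1\RG}$ vanishes as $M\to 1^+$.

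Combining,
$$\int_{\rn} B(|\phi|)\,\dgn \le NM + \tfrac{Ne^{t_0^\beta}}{t_0}\,\omega_\beta(M),$$
where $\omega_\beta(M)\to 0$ as $M\to 1^+$. At $M=1$ the right-hand side equals $N$, which is strictly less than $1$ by hypothesis; this is precisely the slack contributed by the assumption $N\in(0,1)$. Since the right-hand side is non-decreasing and continuous in $M$, one can choose $M>1$ for which it remains at most $1$, as required. I expect the main obstacle to be the case-dependent control of $\|\phi\|_{L^1\RG}$ for $\beta<1$, where the split at $|\phi|=1$ is needed to reconcile $|\phi|$ with the available higher powers $|\phi|^{k\beta}$.
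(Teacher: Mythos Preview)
Your proof is correct and follows the same overall structure as the paper's: reformulate \eqref{E:phi-norm-1} as the modular bound $\int_{\rn}B(|\phi|)\,\dgn\le 1$, split at $t_0$, use the chord inequality $B(t)\le (Ne^{t_0^\beta}/t_0)\,t$ on $[0,t_0]$ together with \eqref{BN} above $t_0$, and reduce the problem to showing that $\|\phi\|_{L^1\RG}\to 0$ as $M\to 1^+$.

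The only genuine difference is in how that $L^1$ control is obtained. The paper introduces the auxiliary Young function $B_M(t)=E(t)/(M-1)$, where $E$ is the convex envelope of $e^{t^\beta}-1$, observes that \eqref{E:phi-modular-M} forces $\|\phi\|_{L^{B_M}\RG}\le 1$, and then applies the Orlicz H\"older inequality \eqref{E:Holder} together with \eqref{E:Orl_char} to get $\|\phi\|_{L^1\RG}\le B_M^{-1}(1)\to 0$ as $M\to 1^+$. Your power-series argument, extracting the moment bounds $\int_{\rn}|\phi|^{k\beta}\,\dgn\le k!(M-1)$ and then interpolating via Jensen (for $\beta\ge 1$) or a level-set split (for $\beta<1$), is more elementary and self-contained, avoiding the convex-envelope construction and any Orlicz machinery. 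The paper's route, in turn, is uniform in $\beta$ and sidesteps the case distinction you flagged as the main obstacle. Either approach closes the argument.
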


\begin{proof}
Denote by $E\colon[0, \infty) \to [0, \infty)$ the convex envelope of the function
$e^{t^\beta}-1$. Namely,  $E$
is the largest convex function not exceeding $e^{t^\beta}-1$ on $[0,\infty)$.
Given $M >1$,   define the function  $B_M\colon[0,\infty)\to[0,\infty)$ as
\begin{equation*}
	B_M(t) = \frac{E(t)}{M-1}
		\quad\text{for $t\in[0,\infty)$.}
\end{equation*}
Observe, that $B_M$ is a Young function.
Now, if $\phi$ fulfills \eqref{E:phi-modular-M}, then
\begin{equation*}
	\int_{\rn} B_M(|\phi|)\,\dgn
		\le \frac{1}{M-1} \int_{\rn} \left( e^{|\phi|^\beta} -1 \right)\dgn
		\leq 1,
\end{equation*}
whence, by the definition of Luxemburg norm,
$\|\phi\|_{L^{B_M}\RG}\le 1$.
Owing to H\"older's inequality \eqref{E:Holder} and to \eqref{E:Orl_char},
\begin{equation} \label{jun12}
	\int_{\rn} |\phi|\,\dgn
		 \le \|\phi\|_{L^{B_M}\RG}
			\opnorm{1}_{L^{\tilde{B_M}}\RG}
			\le B_M^{-1}(1).
\end{equation}
Notice that
\begin{equation} \label{jun11}
	\lim _{M\to 1^+} B_M^{-1}(1) = 0,
\end{equation}
inasmuch as $E(t)\to 0$ as $t\to 0^+$.  Now, assume that $B$ obeys \eqref{BN}
for $t\in(t_0,\infty)$.  Since $B$ is convex and vanishes at $0$,
\begin{equation*}
	B(t) \le N
	\begin{cases}
		t\frac{e^{t_0^\beta}}{t_0}
			& \text{for $t\in[0,t_0)$}
			\\
		e^{t^\beta}
			& \text{for $t\in[t_0,\infty)$}.
	\end{cases}
\end{equation*}
Therefore, by \eqref{E:phi-modular-M} and \eqref{jun12},
\begin{align*}
	\int_{\rn} B(|\phi|)\,\dgn
		& \le N\frac{e^{t_0^\beta}}{t_0}
			\int_{\{|\phi|\le t_0\}} |\phi|\,\dgn
			+ N \int_{\{|\phi|> t_0\}} e^{|\phi|^\beta}\,\dgn
			\\
		& \le N\frac{e^{t_0^\beta}}{t_0}
			\int_{\rn} |\phi|\,\dgn
			+ N \int_{\rn} e^{|\phi|^\beta}\,\dgn
			 = N\left( \frac{e^{t_0^\beta}}{t_0}B_M^{-1}(1) + M \right).
\end{align*}
Observe, that, owing to \eqref{jun11}, the expression in  brackets on the
rightmost side  converges to $1$ as $M$ tends to $1^+$. Consequently, since
$N\in(0,1)$, $M$ can be chosen so close to $1$  that
\begin{equation*}
	N\left( \frac{e^{t_0^\beta}}{t_0}(\log M)^\frac{1}{\beta} + M \right)
		\le 1,
\end{equation*}
whence \eqref{E:phi-norm-1} follows.
\end{proof}

\begin{lemma} \label{L:norm-implies-modular-to-B-is-M}
Let $\beta >0$.  Assume that $B$ is a Young function satisfying condition
\eqref{BN} for some $N>0$.  Then there exists a constant $M> 1$ such that
inequality \eqref{E:phi-modular-M} holds for every function $\phi\in\MM(\rn)$
fulfilling condition \eqref{E:phi-norm-1}.
\end{lemma}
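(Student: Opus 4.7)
The plan is to argue in the direction opposite to Lemma~\ref{L:modular-implies-norm-to-B-is-M}, by decomposing the modular integral $\int_{\rn} e^{|\phi|^\beta}\dgn$ into two pieces corresponding to the region where the condition \eqref{BN} applies and the bounded region where it does not.

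First, I would observe that the hypothesis $\|\phi\|_{L^B\RG}\le 1$ entails, via the standard characterization of the Luxemburg norm on Orlicz spaces built on finite-valued Young functions, the modular estimate
\begin{equation*}
	\int_{\rn} B(|\phi|)\,\dgn \le 1.
\end{equation*}
(If needed, this is justified by monotone convergence applied to $\int B(|\phi|/\lambda)\,\dgn\le 1$ as $\lambda\to 1^+$.)

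Next, I would split the integral over $\{|\phi|\le t_0\}$ and $\{|\phi|>t_0\}$. On the first set, the integrand is bounded by $e^{t_0^\beta}$, and since $\gamma_n$ is a probability measure this contributes at most $e^{t_0^\beta}$. On the second set, assumption \eqref{BN} reads $B(|\phi|)=Ne^{|\phi|^\beta}$, so
\begin{equation*}
	\int_{\{|\phi|>t_0\}} e^{|\phi|^\beta}\,\dgn
	= \frac{1}{N}\int_{\{|\phi|>t_0\}} B(|\phi|)\,\dgn
	\le \frac{1}{N}.
\end{equation*}
Adding the two contributions yields $\int_{\rn} e^{|\phi|^\beta}\,\dgn \le e^{t_0^\beta}+\tfrac{1}{N}$, and this bound is independent of $\phi$. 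Choosing any $M>\max\{1,\,e^{t_0^\beta}+1/N\}$ gives the desired constant; note that $M>1$ is automatic since $t_0>0$ forces $e^{t_0^\beta}>1$.

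There is no real obstacle here: the core content is the trivial inequality $e^{t^\beta}\le B(t)/N$ for $t>t_0$, together with the boundedness of $e^{t^\beta}$ on $[0,t_0]$ and the fact that $\gamma_n$ has total mass one. The only minor issue is verifying that the Luxemburg norm control really gives the unscaled modular bound $\int B(|\phi|)\dgn\le 1$; this is standard but worth stating, since for general Orlicz spaces one only gets $\int B(|\phi|/\lambda)\dgn\le 1$ for $\lambda>\|\phi\|_{L^B\RG}$, and the case $\lambda=1$ follows by a left-continuity/monotone convergence argument using the fact that $B$ is finite-valued.
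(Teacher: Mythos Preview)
Your proof is correct and follows essentially the same route as the paper's: both pass from $\|\phi\|_{L^B\RG}\le 1$ to $\int_{\rn} B(|\phi|)\,\dgn\le 1$, split the modular integral at level $t_0$, bound the small-value part by $e^{t_0^\beta}$ using that $\gamma_n$ is a probability measure, and control the large-value part by $\tfrac1N\int B(|\phi|)\,\dgn\le \tfrac1N$ via \eqref{BN}, arriving at $M=e^{t_0^\beta}+1/N$. The only cosmetic difference is that the paper overcounts slightly by bounding $\int_{\rn} e^{|\phi|^\beta}\,\dgn\le \int_{\rn} e^{t_0^\beta}\,\dgn + \int_{\{|\phi|\ge t_0\}} e^{|\phi|^\beta}\,\dgn$ rather than doing a disjoint split, which makes no difference to the final bound.
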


\begin{proof}
Let $t_0\in(0,\infty)$ be such that $B$ fulfills condition \eqref{BN} for
$t\in(t_0,\infty)$.  By the definition of Luxemburg norm, assumption
\eqref{E:phi-norm-1} is equivalent to
\begin{equation*}
	\int_{\rn} B(|\phi|)\,\dgn \le 1.
\end{equation*}
Thereby
\begin{align*}
	\int_{\rn} e^{|\phi|^\beta}\,\dgn
		\le \int_{\rn} e^{t_0^\beta}\,\dgn
			+ \int_{\{|\phi|\ge t_0\}} e^{|\phi|^\beta}\,\dgn
		 \le e^{t_0^\beta} + \frac 1N \int_{\rn} B(|\phi|)\,\dgn
			= e^{t_0^\beta} + \frac1N.
\end{align*}
The conclusion follows by choosing $M=e^{t_0^\beta}+1/N$.
\end{proof}

\begin{lemma} \label{L:modular-implies-norm-to-M-is-B}
Let $\beta>0$ and $M>1$. Then there exists a Young function $B$ satisfying
condition \eqref{BN} for some $N>0$, and such that inequality
\eqref{E:phi-norm-1} holds for every function $\phi\in\MM(\rn)$ obeying
condition \eqref{E:phi-modular-M}.
\end{lemma}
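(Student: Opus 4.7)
The plan is to construct $B$ explicitly by the same piecewise gluing device used inside the proof of Lemma~\ref{L:B-integral-beta-large}. Fix $t_0>0$ (to be chosen below depending only on $\beta$), fix $N>0$ (to be chosen depending on $t_0$ and $M$), and define
\[
	B(t)=
	\begin{cases}
		N\,t\,\dfrac{e^{t_0^\beta}}{t_0} & \text{for $t\in[0,t_0)$,}\\
		N\,e^{t^\beta} & \text{for $t\in[t_0,\infty)$.}
	\end{cases}
\]
This $B$ is manifestly of the form \eqref{BN}. The first step is to ensure that $B$ is a Young function, i.e.\ that its left-continuous derivative $b$ is non-decreasing. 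On $[t_0,\infty)$ one has $b(t)=N\beta t^{\beta-1}e^{t^\beta}$, which is non-decreasing provided $\beta t^\beta\geq 1-\beta$; this holds automatically for $\beta\geq 1$, and for $\beta\in(0,1)$ it gives the condition $t_0\geq((1-\beta)/\beta)^{1/\beta}$. Matching left and right derivatives at~$t_0$ requires in addition $\beta t_0^\beta\geq 1$, i.e.\ $t_0\geq\beta^{-1/\beta}$. Any $t_0$ exceeding the maximum of these thresholds will do, and from now on such a choice of $t_0$ is fixed, depending only on $\beta$.

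The second step is to estimate the modular $\int_{\rn}B(|\phi|)\,\dgn$ under the hypothesis \eqref{E:phi-modular-M}. Splitting according to whether $|\phi|\leq t_0$ or $|\phi|>t_0$, monotonicity of $B$ on $[0,t_0]$ gives $B(|\phi|)\leq B(t_0)=Ne^{t_0^\beta}$ on the first set, while on the second set $B(|\phi|)=Ne^{|\phi|^\beta}$. Hence
\[
	\int_{\rn}B(|\phi|)\,\dgn
		\leq Ne^{t_0^\beta}\,\gamma_n(\{|\phi|\leq t_0\})
			+ N\int_{\rn}e^{|\phi|^\beta}\,\dgn
		\leq N\bigl(e^{t_0^\beta}+M\bigr).
\]
It now suffices to choose $N=1/(e^{t_0^\beta}+M)$, which is admissible since it is a positive real, so that $\int_{\rn}B(|\phi|)\,\dgn\leq 1$. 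By the very definition of the Luxemburg norm, this yields \eqref{E:phi-norm-1}, completing the proof.

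There is no serious obstacle: the whole argument is a direct counterpart of Lemma~\ref{L:norm-implies-modular-to-B-is-M}, with the roles of ``given'' and ``chosen'' swapped between $M$ and the parameters of $B$. The only point requiring a small amount of bookkeeping is the convexity of the piecewise definition, which is absorbed entirely into the lower bound on $t_0$ and involves only $\beta$.
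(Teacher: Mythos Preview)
Your proof is correct and essentially identical to the paper's: the same piecewise Young function $B(t)=N t\,e^{t_0^\beta}/t_0$ on $[0,t_0)$ and $B(t)=Ne^{t^\beta}$ on $[t_0,\infty)$, the same splitting of the modular integral into $\{|\phi|\le t_0\}$ and $\{|\phi|>t_0\}$, and the same choice $N=1/(e^{t_0^\beta}+M)$. The only difference is cosmetic---you spell out the convexity threshold on $t_0$ explicitly, whereas the paper simply asserts that $t_0$ can be chosen large enough.
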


\begin{proof}
Given $t_0>0$,  define the function $A\colon[0,\infty)\to[0,\infty)$ by
\begin{equation*}
	A(t) =
	\begin{cases}
		t\frac{e^{t_0^\beta}}{t_0}
			& \text{for $t\in [0,t_0)$}
		\\
		e^{t^\beta}
			& \text{for $t\in [t_0,\infty)$}.
	\end{cases}
\end{equation*}
Clearly, $t_0$ can be chosen   large  enough for  $A$ to be
convex. Set  $N=1/(M+e^{t_0^\beta})$ and let $B=NA$. We claim that $B$
is a Young function with the required properties.  Indeed, if $\phi$ is any
 function obeying \eqref{E:phi-modular-M}, then
\begin{align*}
	\int_{\rn} B(|\phi|)\,\dgn
		\le N \int_{\{|\phi|\ge t_0\}} e^{|\phi|^\beta}\,\dgn
				+ N \int_{\{|\phi|<t_0\}} e^{t_0^\beta}\,\dgn
				\le N\left(M + e^{t_0^\beta}\right) = 1.
\end{align*}
Hence, inequality  \eqref{E:phi-norm-1} follows by the very
definition of Luxemburg norm.
\end{proof}

Propositions~\ref{P:mB-critical-beta-large} and \ref{P:LB-critical-beta-large}
below are of use in the proof of part (2.ii) of Theorems~\ref{T:norm-form} and
\ref{T:weak-form}.

\begin{proposition} \label{P:mB-critical-beta-large}
Let $\beta\in(2,\infty)$.  Then, given any  $N>0$, there exist a Young function
$B$ satisfying condition \eqref{BN} for some $t_0$, and a constant
$C=C(\beta,N,t_0)$ such that
\begin{equation}\label{july11}
	\int_{\rn} e^{\left( \kappab |u|\right)^\frac{2\beta}{2+\beta}}\,\dgn
		\le C
\end{equation}
for every function $u\in\WexpLb$ fulfilling \eqref{mu}
and \eqref{E:nabla-Marcinkiewicz-m}.
\end{proposition}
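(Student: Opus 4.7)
The plan is to combine Lemma~\ref{L:Holder-estimates} with the refined choice of $B$ supplied by Lemma~\ref{L:B-integral-beta-large}. First, since every $u$ obeying \eqref{E:nabla-Marcinkiewicz-m} satisfies the hypothesis of Lemma~\ref{L:Holder-estimates} in the case $X=m^B$, applying that lemma with $\kappa = \kappab$ gives
\begin{equation*}
\int_{\rn} e^{(\kappab |u|)^{\frac{2\beta}{2+\beta}}}\dgn
\le \sqrt{\tfrac{2}{\pi}} \int_0^\infty \exp\Bigl\{ [\kappab \FF_{m^B}(t)]^{\frac{2\beta}{2+\beta}} - \tfrac{t^2}{2} \Bigr\}\,\d t,
\end{equation*}
so the entire problem reduces to choosing $B$ in such a way that the right-hand side is finite.

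Next, fix the prescribed $N>0$ and pick any $\mu>0$. By Lemma~\ref{L:B-integral-beta-large}, there is a Young function $B$ satisfying \eqref{BN} with this $N$ (for some $t_0$) and such that
\begin{equation*}
[\kappab \FF_{m^B}(t)]^{\frac{2\beta}{2+\beta}} \le \tfrac{t^2}{2} - \mu\, t^{1-\frac{2}{\beta}} + \cdots \quad\text{as $t\to\infty$.}
\end{equation*}
Since $\beta>2$, the exponent $1-\tfrac{2}{\beta}$ is strictly positive. Substituting this expansion into the exponent of the integrand above, the leading $t^2/2$ terms cancel, leaving
\begin{equation*}
[\kappab \FF_{m^B}(t)]^{\frac{2\beta}{2+\beta}} - \tfrac{t^2}{2} \le - \tfrac{\mu}{2}\, t^{1-\frac{2}{\beta}}
\end{equation*}
for all sufficiently large $t$ (say $t\ge T$ for some $T=T(\beta,N,t_0)$). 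Hence the tail integral $\int_T^\infty \exp\{-\tfrac{\mu}{2} t^{1-\frac{2}{\beta}}\}\d t$ converges. The remaining piece $\int_0^T$ is a finite integral of a continuous function (note that $\FF_{m^B}(t)$ is continuous and finite on $(0,T]$ thanks to the representation \eqref{E:F-mB} and the fact that $B^{-1}(1/\Phi(\tau))$ is locally integrable near $0$). Combining these two bounds gives a finite constant $C=C(\beta,N,t_0)$ satisfying \eqref{july11}.

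The main non-routine step is the asymptotic tuning in Lemma~\ref{L:B-integral-beta-large}, which has already been done; once it is available, the argument here is essentially mechanical. The only subtlety is keeping track of the fact that the constant $N$ in condition \eqref{BN} is prescribed in advance, but Lemma~\ref{L:B-integral-beta-large} is formulated precisely to allow any such $N$, so no further adjustment is needed.
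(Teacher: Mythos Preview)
Your proof is correct and follows essentially the same approach as the paper: apply Lemma~\ref{L:Holder-estimates} with $X=m^B$ and $\kappa=\kappab$, then invoke Lemma~\ref{L:B-integral-beta-large} to choose $B$ so that the exponent in the resulting one-dimensional integral behaves like $-\mu t^{1-2/\beta}$ at infinity, which makes the integral converge. Your added remark about finiteness and continuity of $\FF_{m^B}$ on bounded intervals is a minor elaboration the paper leaves implicit.
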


\begin{proof}
Fix any $\mu>0$ and let $B$ be a Young function for which equation
\eqref{E:B-integral-lambda-power} holds.  By Lemma~\ref{L:Holder-estimates},
\begin{equation} \label{apr15}
	\int_{\rn}
		e^{\left(\kappab |u| \right)^{\frac{2\beta}{2+\beta}}}\,\dgn
		\le \sqrt{\frac{2}{\pi}}
			\int_{0}^{\infty} e^{\left[ \kappab\FF_{m^{B}}(t) \right]^{\frac{2\beta}{2+\beta}}
				-\frac{t^2}{2}}\,\d t,
\end{equation}
where $\FF_{m^{B}}$ is given by \eqref{E:F-mB}. Owing to
Lemma~\ref{L:B-integral-beta-large},
\begin{equation*}
	\bigl[\kappab
		\FF_{m^{B}}(t)
	\bigr]^\frac{2\beta}{2+\beta}
		- \frac{t^2}{2}
		\le  - \mu t^{1-\frac{2}{\beta}} + \cdots
		\quad\text{as $t\to\infty$}.
\end{equation*}
This ensures that  the integral on the right-hand side of \eqref{apr15}
converges. Inequality \eqref{july11} thus follows from \eqref{apr15}.
\end{proof}

\begin{proposition} \label{P:LB-critical-beta-large}
Let $\beta\in(2,\infty)$. Then given any $N>0$, there exist a Young function
$B$ satisfying condition \eqref{BN} and a sequence of functions $\{u_k\}
\subset \WexpLb$, such that $\med (u_k)=\mv (u_k)=0$ and
\begin{equation} \label{E:nabla-u-t}
	\|\nabla u_k\|_{L^{B}\RG} \le 1
\end{equation}
for $k \in \mathbb N$,
satisfying
\begin{equation}\label{july12}
	\lim_{k\to\infty}
		\int_{\rn} e^{\left( \kappab |u_k|\right)^\frac{2\beta}{2+\beta}}\,\dgn
			= \infty.
\end{equation}
\end{proposition}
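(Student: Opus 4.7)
The plan is to exhibit, for a carefully tuned Young function $B$, explicit one-variable test functions that saturate, to leading order, the Hölder-type estimate of Lemma~\ref{L:Holder-estimates} and hence force the exponential integral to diverge. Fix $N>0$ and take $B$ to coincide with $Ne^{t^\beta}$ on $[t_0,\infty)$ and to be linear on $[0,t_0]$, treating $t_0$ as a tunable large parameter. As in the proof of Lemma~\ref{L:B-integral-beta-large}, which handles the parallel $m^B$ situation, varying $t_0$ adjusts the global contribution to $\FF_{L^B}(t)$; the first step is to show that $t_0$ can be chosen so that the constant $c'(B)$ appearing in \eqref{E:I-norm-power} is non-positive, and more quantitatively so that the matching lower expansion
\begin{equation*}
	\bigl[\kappab \FF_{L^B}(t)\bigr]^{\frac{2\beta}{2+\beta}}
		\ge \frac{t^2}{2} + \mu\, t^{1-\frac{2}{\beta}} + \cdots
		\quad\text{as $t\to\infty$}
\end{equation*}
holds for some $\mu>0$.

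Given such a $B$, I would define $u_k(x)=\eta_k(\Phi(x_1))$, where the profile $\eta_k\colon(0,1)\to\R$ is prescribed by
\begin{equation*}
	\eta_k(s) = \int_{s}^{\frac12} \frac{h_k(r)}{I(r)}\,\d r
		\quad\text{for $s\in(0,\tfrac12]$,}
	\qquad
	\eta_k(s) = -\eta_k(1-s)
		\quad\text{for $s\in[\tfrac12,1)$,}
\end{equation*}
with a non-negative $h_k$ supported in $[\tfrac1k,\tfrac12]$. Since $u_k$ depends only on $x_1$, one checks directly that $u_k^\circ=\eta_k$, so that $\med(u_k)=\eta_k(\tfrac12)=0$, $\mv(u_k)=0$ by the antisymmetry of $\eta_k$, and the Ehrhard inequalities of Propositions~\ref{P:L3.3} and \ref{P:PSnorm} collapse to equalities, yielding $\|\nabla u_k\|_{L^B\RG}=\|h_k\|_{L^B(0,1)}$. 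I then choose $h_k$ as a near-extremizer for the Hölder duality $\int_s^{\frac12} h_k/I\le\|h_k\|_{L^B}\opnorm*{\frac{1}{I}}_{L^{\tilde B}(s,\frac12)}$: namely, $h_k(r)=\lambda_k\, a^{-1}\bigl(\mu_k/I(r)\bigr)\chi_{[\frac1k,\frac12]}(r)$, where $a$ denotes the derivative of $\tilde B$ and $\lambda_k,\mu_k>0$ are determined by the pointwise equality case $\tau=a^{-1}(t)$ of Young's inequality \eqref{E:Young-ineq} together with the normalization $\|h_k\|_{L^B(0,1)}=1$. This would give $\eta_k(s)\ge(1-o(1))\opnorm*{\frac{1}{I}}_{L^{\tilde B}(s,\frac12)}$ as $k\to\infty$, uniformly on $[\tfrac1k,\tfrac12]$.

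Finally, running the change of variables $s=\Phi(t)$ from the proof of Lemma~\ref{L:Holder-estimates} in reverse leads to
\begin{equation*}
	\int_{\rn} e^{(\kappab|u_k|)^{\frac{2\beta}{2+\beta}}}\,\dgn
		\ge \sqrt{\tfrac{2}{\pi}} \int_{\Phi^{-1}(\frac12)}^{\Phi^{-1}(\frac1k)}
			e^{[\kappab\eta_k(\Phi(t))]^{\frac{2\beta}{2+\beta}} - \frac{t^2}{2}}\,\d t.
\end{equation*}
The just-obtained lower bound on $\eta_k(\Phi(t))$, combined with the envisaged lower expansion of $\FF_{L^B}$, bounds the integrand below by $e^{\mu t^{1-2/\beta}+\cdots}$ on a range extending to $\Phi^{-1}(\tfrac1k)\to\infty$; since $1-2/\beta>0$ for $\beta>2$, the integral blows up as $k\to\infty$. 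The main obstacle is precisely the \emph{first} step: producing a genuine lower asymptotic for $\FF_{L^B}$ matching the upper bound of Lemma~\ref{L:I-norm-sharp} for a $B$ whose tunable global part forces $c'(B)\le 0$. This is the $L^B$ counterpart of Lemma~\ref{L:B-integral-beta-large}, and calls for a careful two-sided asymptotic analysis of the Luxemburg norm $\opnorm*{\frac{1}{I}}_{L^{\tilde B}(\Phi(t),\frac12)}$ combined with a precise adjustment of the near-zero portion of $B$.
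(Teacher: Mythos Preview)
Your proposal contains a genuine gap, which you yourself flag: the ``first step''---a lower expansion $[\kappab\FF_{L^B}(t)]^{\frac{2\beta}{2+\beta}}\ge\tfrac{t^2}{2}+\mu t^{1-\frac2\beta}+\cdots$ with a tunable $\mu>0$---is not established, and Lemma~\ref{L:I-norm-sharp} supplies only an upper bound. Tuning the constant $c'(B)$ in that upper bound has no bearing on the lower direction you need. There is a second, quantitative problem with your near-extremizer step: even granting the hoped-for lower expansion, the claim $\eta_k(s)\ge(1-o(1))\opnorm*{\tfrac1I}_{L^{\tilde B}(s,\frac12)}$ is too weak. After raising to the power $\tfrac{2\beta}{2+\beta}$, a factor $1-o(1)$ multiplying the leading term $\tfrac{t^2}{2}$ produces an error of order $o(1)\cdot t^2$, which dominates the $\mu t^{1-\frac2\beta}$ gain unless the $o(1)$ decays at least like $t^{-1-\frac2\beta}$ uniformly over $s\in[\tfrac1k,\tfrac12]$; you have not argued this, and a single $h_k$ being a simultaneous near-extremizer on all intervals $(s,\tfrac12)$ is delicate.

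The paper avoids both difficulties by never passing through the Orlicz-norm duality for the lower bound. It takes $B=N\AL$ where $\AL$ is the convex minorant of $A(t)=e^{t^\beta}$ that \emph{vanishes} on $[0,t_0']$ with $t_0'=t_0-A(t_0)/a(t_0)$, is affine on $[t_0',t_0]$, and agrees with $A$ beyond $t_0$---this is not your ``linear on $[0,t_0]$'' choice (that one, from Lemma~\ref{L:B-integral-beta-large}, is used in the opposite direction). It then sets
\[
	u_k(x)=\sgn x_1\int_0^{\min(|x_1|,k)}\aL^{-1}\bigl(e^{\tau^2/2}\bigr)\,\d\tau,
\]
so $|\nabla u_k(x)|=\aL^{-1}(e^{|x_1|^2/2})\chi_{\{|x_1|<k\}}$. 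The flat-zero portion of $\AL$ does two things at once: it makes $\AL(\aL^{-1}(e^{\tau^2/2}))=\AL(t_0')=0$ on an initial interval, so $\int B(|\nabla u_k|)\,\dgn$ reduces to a tail term one drives below $1$ by taking $t_0$ large; and it makes $\aL^{-1}(e^{\tau^2/2})\equiv t_0'$ on that same interval of length $\sim\sqrt2\,t_0^{\beta/2}$, which after direct computation (Lemma~\ref{L:a-invers}) yields $\int_0^k\aL^{-1}(e^{\tau^2/2})\,\d\tau\ge 2^{-\frac1\beta}\tfrac{\beta}{2+\beta}k^{\frac2\beta+1}+\lambda(t_0)$ with $\lambda(t_0)\sim\sqrt2\tfrac{2}{2+\beta}t_0^{1+\beta/2}\to+\infty$. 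The blow-up then follows from the single-level estimate on $\{|x_1|>k\}$, where $u_k$ is constant, without any appeal to $\FF_{L^B}$ or Hölder extremizers.
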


\begin{proof}
\let\sigma\tau
Set $A(t)=e^{t^\beta}$ for $t\ge 0$ and let $t_0>0$.
Define the function  $\AL\colon[0,\infty)\to[0,\infty)$ by
\begin{equation*}
	\AL(t) =
	\begin{cases}
		0
			& \text{for $t\in[0,t_0')$}
			\\
		a(t_0)(t-t_0) + A(t_0)
			& \text{for $t\in[t_0',t_0)$}
			\\
		A(t)
			& \text{for $t\in[t_0,\infty)$},
	\end{cases}
\end{equation*}
where $a$ denotes the derivative of $A$ and
\begin{equation*}
	t_0' = t_0 - \frac{A(t_0)}{a(t_0)}.
\end{equation*}
Notice that $t_0' \in(0,t_0)$. Given $N>0$, set  $B=N\AL$. Clearly $B(t) =
Ne^{t^\beta}$ for $t>t_0$. On denoting by $\aL$ the left-continuous function
such that $\AL(t)=\int_0^t\aL (\tau)\,\d\tau$ for $t\geq0$, one has that
\begin{equation*}
	\aL(t) =
	\begin{cases}
		0
			& \text{for $t\in[0,t_0')$}
			\\
		a(t_0)
			& \text{for $t\in[t_0',t_0)$}
			\\
		a(t)
			& \text{for $t\in[t_0,\infty)$}
	\end{cases}
	\quad\text{and}\quad
	\aL^{-1}(\tau) =
	\begin{cases}
		0
			& \text{for $\tau=0$}
			\\
		t_0'
			& \text{for $\tau\in(0,a(t_0)]$}
			\\
		a^{-1}(\tau)
			& \text{for $\tau\in(a(t_0),\infty)$}.
	\end{cases}
\end{equation*}
For each $k \in \mathbb N$,  define the function $u_k\colon\rn\to\R$ by
\begin{equation*}
	u_k(x) = \sgn x_1
	\begin{cases}
		\displaystyle
		\int_{0}^{|x_1|} \aL^{-1}\Bigl( e^{\frac{\tau^2}{2}} \Bigr)\,\d \tau
			& \text{for $|x_1|<k$}
			\\
		\displaystyle
		\int_{0}^{k} \aL^{-1}\Bigl( e^{\frac{\tau^2}{2}} \Bigr)\,\d \tau
			& \text{for $|x_1|\ge k$}.
	\end{cases}
\end{equation*}
Clearly $\med(u_k)=\mv(u_k)=0$ for $k \in \mathbb N$.
Next, set
\begin{equation*}
	\sigma(t_0)=\sqrt{2\log a(t_0)}.
\end{equation*}
If  $k>\sigma(t_0)$, then
\begin{align*}
	|\nabla u_k(x)|
	 =
	\begin{cases}
		\aL^{-1}\Bigl( e^{\frac{|x_1|^2}{2}} \Bigr)
			& \text{for $|x_1|<k$}
			\\
		0
			& \text{for $|x_1|>k$}
	\end{cases}
	=
	\begin{cases}
		t_0'
			& \text{for $|x_1|<\sigma(t_0)$}
			\\
		a^{-1}\Bigl( e^{\frac{|x_1|^2}{2}} \Bigr)
			& \text{for $\sigma(t_0)<|x_1|<k$}
			\\
		0
			& \text{for $k<|x_1|$.}
	\end{cases}
\end{align*}
Therefore,
\begin{align*}
	\int_{\rn} B(|\nabla u_k|)\,\dgn
		& = \frac{2N}{\sqrt{2\pi}} \int_{0}^{k}
			\AL\left( \aL^{-1}\Bigl( e^{\frac{\tau^2}{2}} \Bigr) \right)
				e^{-\frac{\tau^2}{2}}\,\d \tau
				\\
		& = N\sqrt{\frac{2}{\pi}}
			\int_{0}^{\sigma(t_0)} \AL(t_0')
				e^{-\frac{\tau^2}{2}}\,\d \tau
			+ N\sqrt{\frac{2}{\pi}}\int_{\sigma(t_0)}^{k}
			A\left( a^{-1}\Bigl( e^{\frac{\tau^2}{2}} \Bigr) \right)
				e^{-\frac{\tau^2}{2}}\,\d \tau.
\end{align*}
By   the definition of $\AL$,  one has that $\AL(t_0')=0$.  Thus the first
integral on the rightmost side of the last equation vanishes.  On the other
hand, we have that
\begin{equation*}
	A(t)
		= Ne^{t^\beta}
		= \ib t^{1-\beta}a(t)
		\quad\text{for $t>0$},
\end{equation*}
whence, owing to Lemma~\ref{L:a-invers},
\begin{equation*}
	A(a^{-1}(t))
		= \ib t\left( a^{-1}(t) \right)^{1-\beta}
		= \ib t(\log t)^{\frac{1}{\beta}-1}
			+ \cdots
			\quad\text{as $t\to\infty$}
\end{equation*}
and the second integral converges as $k\to\infty$ since $\beta\in(2,\infty)$.
Thus, if $k>\sigma(t_0)$, then
\begin{equation*}
	\int_{\rn} B(|\nabla u_k|)\,\dgn
		\le M(t_0),
\end{equation*}
where we have set
\begin{equation*}
	M(t_0) = N\sqrt{\frac{2}{\pi}}\int_{\sigma(t_0)}^{\infty}
			A\left( a^{-1}\Bigl( e^{\frac{\tau^2}{2}} \Bigr) \right)
				e^{-\frac{\tau^2}{2}}\,\d \tau.
\end{equation*}
Since $M(t_0)$ tends to $0$ as  $t_0\to\infty$,
we may choose  $t_0$ sufficiently large that
$\int_{\rn}B(|\nabla u_k|)\,\dgn\le 1$. Hence, inequality
\eqref{E:nabla-u-t} holds for $k>\sigma(t_0)$.

Let us now focus on equation \eqref{july12}. We have that
\begin{equation} \label{E:mar1}
	\int_{\rn} e^{(\kappab|u_k|)^\frac{2\beta}{2+\beta}}\,\dgn
		\ge \int_{\{|x_1|>k\}}
			e^{(\kappab|u_k|)^\frac{2\beta}{2+\beta}}\,\dgn
		= 2\Phi(k)\exp\left\{
				\left( \kappab
					\int_{0}^{k} \aL^{-1}\Bigl(e^{\frac{\tau^2}{2}}\Bigr) \,\d \tau
				\right)^\frac{2\beta}{2+\beta}
			\right\}.
\end{equation}
Next,
\begin{equation*}
	\int_{0}^{k} \aL^{-1}\Bigl(e^{\frac{\tau^2}{2}}\Bigr)\,\d \tau
		= \left( t_0-\frac{A(t_0)}{a(t_0)} \right)
			\int_{0}^{\sigma(t_0)}\d \tau
		+ \int_{\sigma(t_0)}^{k}
			a^{-1}\Bigl(e^{\frac{\tau^2}{2}}\Bigr)\,\d \tau.
\end{equation*}
By Lemma~\ref{L:a-invers}, there exists $\tau_0>0$ such that
\begin{equation*}
	a^{-1}(\tau)
		\ge (\log \tau)^\frac{1}{\beta}
		- (\log \tau)^{\frac{1}{\beta}-1}\log\log \tau
		\quad\text{for $\tau>\tau_0$}.
\end{equation*}
Assume, in addition, that $t_0$ obeys $\sigma(t_0)>\tau_0$. Then
\begin{align*}
	\int_{\sigma(t_0)}^{k}
		a^{-1}\bigl(e^{\frac{\tau^2}{2}}\bigr)\,\d \tau
		& \ge \int_{\sigma(t_0)}^{k}
				\left( \frac{\tau^2}{2} \right)^\frac{1}{\beta}\d \tau
			- \int_{\sigma(t_0)}^{k}
				\left( \frac{\tau^2}{2} \right)^{\frac{1}{\beta}-1}\log\frac{\tau^2}{2}\,\d \tau
			\\
		& \ge 2^{-\frac{1}{\beta}}\frac{\beta}{2+\beta} k^{\frac{2}{\beta}+1}
			- \sqrt{2}\frac{\beta}{2+\beta}
				\bigl(\log a(t_0)\bigr)^{\frac{1}{\beta}+\frac{1}{2}}
			- \int_{\sigma(t_0)}^{\infty}
				\left( \frac{\tau^2}{2} \right)^{\frac{1}{\beta}-1}\log\frac{\tau^2}{2}\,\d \tau,
\end{align*}
since the last integral converges.
Consequently, if $k>\sigma(t_0)$, then
\begin{equation*}
	\int_{0}^{k} \aL^{-1}\bigl(e^{\frac{\tau^2}{2}}\bigr)\,\d \tau
		\ge 2^{-\frac{1}{\beta}}\frac{\beta}{2+\beta} k^{\frac{2}{\beta}+1}
			+ \lambda(t_0),
\end{equation*}
where we have set
\begin{equation}\label{july14}
	\lambda(t_0)
		= \sqrt{2}\left( t_0-\frac{A(t_0)}{a(t_0)} \right)
				\bigl(\log a(t_0)\bigr)^\frac{1}{2}
			- \sqrt{2}\frac{\beta}{2+\beta}
				\bigl(\log a(t_0)\bigr)^{\frac{1}{\beta}+\frac{1}{2}}
			- \int_{\sigma(t_0)}^{\infty}
				\left( \frac{\tau^2}{2} \right)^{\frac{1}{\beta}-1}\log\frac{\tau^2}{2}\,\d \tau.
\end{equation}
Let us analyze the behavior of $\lambda(t_0)$ as $t_0\to\infty$. One has that
\begin{equation*}
	t_0\bigl(\log a(t_0)\bigr)^\frac{1}{2}
		= t_0 \bigl(t_0^\beta+(\beta-1)\log t_0+\log\beta\bigr)^\frac{1}{2}
		= t_0^{\frac{\beta}{2}+1} + \cdots
		\quad\text{as $t_0\to\infty$}
\end{equation*}
and
\begin{equation*}
	\bigl(\log a(t_0)\bigr)^{\frac{1}{\beta}+\frac{1}{2}}
		= \bigr(t_0^\beta+(\beta-1)\log t_0+\log\beta\bigr)^{\frac{1}{\beta}+\frac{1}{2}}
		= t_0^{\frac{\beta}{2}+1} + \cdots
		\quad\text{as $t_0\to\infty$}.
\end{equation*}
The remaining terms on the right-hand side of \eqref{july14} are of a lower
order, since both $A(t_0)/a(t_0)$  and the integral approach $0$  as
$t_0\to\infty$. Thus,
\begin{align*}
	\lambda(t_0)
		& = \sqrt{2}\frac{2}{2+\beta} t_0^{\frac{\beta}{2}+1}
			+ \cdots
			\quad\text{as $t_0\to\infty$.}
\end{align*}
As a consequence, given any $\lambda>0$,  we may choose $t_0$ so large that
$\lambda(t_0)>\lambda$. This choice ensures that
\begin{align*}
	\left(
		\kappab \int_{0}^{k} \aL^{-1}\bigl(e^{\frac{\tau^2}{2}}\bigr)\,\d \tau
	\right)^\frac{2\beta}{2+\beta}
		\ge \left( \kappab
			2^{-\frac{1}{\beta}}\frac{\beta}{2+\beta} k^{\frac{2}{\beta}+1}
			+ \kappab\lambda \right)^\frac{2\beta}{2+\beta}
		= \frac{k^2}{2} + 2^{\frac{1}{\beta}}\lambda k^{1-\frac{2}{\beta}}
				+ \cdots
				\quad\text{as $k\to\infty$}.
\end{align*}
Therefore, by inequality \eqref{E:mar1}
and relation \eqref{E:logPhi},
\begin{align}\label{july15}
\begin{split}
	\int_{\rn} e^{(\kappab|u_k|)^\frac{2\beta}{2+\beta}}\,\dgn
		& \ge \exp\left\{ \log\Phi(k) +
				\left( \kappab
					\int_{0}^{k} \aL^{-1}\Bigl(e^{\frac{\tau^2}{2}}\Bigr) \,\d \tau
				\right)^\frac{2\beta}{2+\beta}
			\right\}
			\\
		& \ge \exp\left\{ \left( -\frac{k^2}{2}-\log k+\cdots \right)
				+ \left( \frac{k^2}{2} + 2^{\frac{1}{\beta}}\lambda k^{1-\frac{2}{\beta}}
					+ \cdots \right)\right\}
				\\
		& = \exp\left\{ 2^{\frac{1}{\beta}}\lambda
			k^{1-\frac{2}{\beta}} + \cdots \right\}.
\end{split}
\end{align}
Equation \eqref{july12} follows, since the rightmost side of equation
\eqref{july15}   tends to infinity as $k\to\infty$.
\end{proof}

The next proposition implies
part (1.ii) of Theorem~\ref{T:weak-form}.

\begin{proposition} \label{P:MB-critical-beta-small}
Let $\beta\in(0,2]$.  Assume that $B$ is a Young function satisfying condition
\eqref{BN} for some $N>0$.  Then there exists a function $u\in\WexpLb$,
fulfilling condition \eqref{E:nabla-Marcinkiewicz-M},  such that
$\med(u)=\mv(u)=0$ and
\begin{equation} \label{apr1}
	\int_{\rn} e^{\left( \kappab |u|\right)^\frac{2\beta}{2+\beta}}
		\,\dgn = \infty.
\end{equation}
\end{proposition}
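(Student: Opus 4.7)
The plan is to exhibit a single explicit function $u$ satisfying all the required properties. I would take $u(x) = v(x_1)$ with $v\colon\R\to\R$ odd, which gives $\med(u) = \mv(u) = 0$ automatically by the symmetry of $\gamma_n$, and with $v' \geq 0$, so that $|\nabla u|(x) = g(|x_1|)$ for some $g\geq 0$ on $[0,\infty)$. The task then reduces to choosing $g$ so that $\|\nabla u\|_{M^B\RG} \leq 1$ and the integral in~\eqref{apr1} diverges.

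As a starting point, I would consider the natural candidate $g(t) = B^{-1}\bigl(1/(2\Phi(t))\bigr)$. A direct computation of the distribution function yields $\gamma_n(\{|\nabla u|>t\}) = 1/B(t)$, and hence $|\nabla u|^*(s) = B^{-1}(1/s)$, so that $\|\nabla u\|_{m^B\RG} = 1$. However, the $M^B$ norm of this prototype strictly exceeds $1$, because the maximal function $|\nabla u|^{**}(s) = (1/s)\int_0^s B^{-1}(1/r)\,\d r$ is larger than $B^{-1}(1/s)$ by a correction of order $(\log(1/s))^{1/\beta-1}/\beta$. To restore the $M^B$ bound, I would modify the rearrangement by imposing $|\nabla u|^{**}(s) = B^{-1}(1/s)$; differentiating $s B^{-1}(1/s)$ dictates $|\nabla u|^*(s) = B^{-1}(1/s) - (1/\beta)(\log(1/s))^{1/\beta-1} + \cdots$ for small $s$, with a suitable cutoff/truncation near $s=1$ to preserve positivity and monotonicity. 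The corresponding $g$ is then given by $g(t) = |\nabla u|^*(2\Phi(t))$, the truncation affecting only a bounded range of small~$|x_1|$.

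With the Marcinkiewicz bound secured, the asymptotic of $u^\circ(s) = v(\Phi^{-1}(s))$ as $s \to 0^+$ is computed via the change of variable $r = 2\Phi(\tau)$ and Lemma~\ref{L:Filog}, in complete parallel with the proof of Lemma~\ref{L:B-integral}. For $\beta \in (0,2)$ one obtains
\begin{equation*}
u^\circ(s) = \frac{\sqrt{2}\,\beta}{\beta+2}\bigl(\log(1/s)\bigr)^{(\beta+2)/(2\beta)} + \cdots \qquad\text{as $s\to 0^+$,}
\end{equation*}
and analogously $u^\circ(s) = (\log(1/s))/\sqrt{2} + \cdots$ for $\beta = 2$. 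The key identity $\kappab \cdot \sqrt{2}\beta/(\beta+2) = 1$ then yields
\begin{equation*}
\bigl[\kappab u^\circ(s)\bigr]^{2\beta/(\beta+2)} = \log(1/s) + \frac{2}{2-\beta}\log\log(1/s) + \cdots \qquad\text{as $s\to 0^+$}
\end{equation*}
for $\beta \in (0,2)$, exactly the positive correction appearing in~\eqref{E:J2-power}; a parallel expansion with a $(\log\log(1/s))^2$ term handles $\beta = 2$. Since $|u^\circ|$ is symmetric about $s = 1/2$,
\begin{equation*}
\int_{\rn} e^{(\kappab|u|)^{2\beta/(\beta+2)}}\,\dgn = 2\int_0^{1/2} e^{[\kappab u^\circ(s)]^{2\beta/(\beta+2)}}\,\d s \geq c\int_0^{1/2} \frac{(\log(1/s))^{2/(2-\beta)}}{s}\,\d s = \infty,
\end{equation*}
establishing~\eqref{apr1}.

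The main technical obstacle is enforcing $\|\nabla u\|_{M^B\RG} \leq 1$ \emph{uniformly} over $s \in (0,1)$, and not merely asymptotically as $s \to 0^+$. A plain rescaling $u \mapsto cu$ with $c < 1$ would destroy the critical identity $\kappab \cdot \sqrt{2}\beta/(\beta+2) = 1$ and render the integral finite (as guaranteed by Part~(1.i) of Theorem~\ref{T:weak-form}), so the truncation of $g$ must be done at a fixed scale and carefully calibrated so that $|\nabla u|^{**}(s) \leq B^{-1}(1/s)$ holds near $s = 1$ as well, without altering the leading asymptotic of $u^\circ$ that drives the divergence.
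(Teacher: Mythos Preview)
Your proposal is correct and follows essentially the same route as the paper's proof. The paper constructs precisely the function you describe: $u$ odd in $x_1$, vanishing for $|x_1|<t_0$, with $|\nabla u|^*(s) = g(s) = \bigl(\log\tfrac{1}{Ns}\bigr)^{1/\beta} - \tfrac{1}{\beta}\bigl(\log\tfrac{1}{Ns}\bigr)^{1/\beta-1}$ on $(0,s_0)$, so that $|\nabla u|^{**}(s) = \bigl(\log\tfrac{1}{Ns}\bigr)^{1/\beta} = B^{-1}(1/s)$ exactly on $(0,s_0)$; the truncation at $s_0$ handles the uniform bound over $(s_0,1)$ via the monotonicity of $s\mapsto 1/\bigl(sB^{-1}(1/s)\bigr)$, and the divergence argument is exactly the second-order asymptotic computation you outline, carried out after the change of variables $s = \Phi(t)$.
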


\begin{proof}
Let $\tau_0$ be such that $B(\tau)=Ne^{\tau^\beta}$ for $\tau\in(\tau_0,\infty)$.
Set
\begin{equation*}
	s_0=\min\left\{ \frac{1}{2}, \frac{1}{N}e^{-\tau_0^\beta} \right\}
\end{equation*}
and $t_0=\Phi^{-1}(s_0)$.
Define the function $u\colon\rn\to\R$ by
\begin{equation} \label{E:u-def}
	u(x) = \sgn x_1
	\begin{cases}
		\displaystyle\int_{\Phi(|x_1|)}^{\Phi(t_0)} \frac{g(s)}{I(s)}\,\d s
			&\text{for $|x_1| \ge t_0$}
			\\
		0
			&\text{for $|x_1| < t_0$},
	\end{cases}
\end{equation}
where $g\colon (0,s_0) \to [0, \infty)$ is the function given by
\begin{equation*}
	g(s) = \left( \log \frac{1}{Ns} \right)^\frac{1}{\beta}
		- \frac{1}{\beta} \left( \log\frac{1}{Ns} \right)^{\frac{1}{\beta}-1}
		\quad\text{for $s\in(0,s_0)$}.
\end{equation*}
Observe that $g$ is decreasing,  provided that $N$ is  chosen sufficiently
large. Clearly $\med (u)=\mv (u)=0$.  Furthermore,
\begin{equation*}
	|\nabla u (x)| =
	\begin{cases}
		g(\Phi (|x_1|))
			&\text{for $|x_1| \ge t_0$}
			\\
		0
			&\text{for $|x_1| <t_0$.}
	\end{cases}
\end{equation*}
Hence,
\begin{equation*}
	|\nabla u|^*(s) =
	\begin{cases}
		g(s)
			&\text{for $s\in(0,s_0)$}
			\\
		0
			&\text{for $s\in(s_0,1)$},
	\end{cases}
\end{equation*}
and
\begin{equation*}
	|\nabla u|^{**}(s) =
	\begin{cases}
		\left( \log\frac{1}{Ns} \right)^\frac{1}{\beta}
			&\text{for $s\in(0,s_0)$}
			\\
		\frac{s_0}{s}\left( \log\frac{1}{Ns_0} \right)^\frac{1}{\beta}
			&\text{for $s\in(s_0,1)$.}
	\end{cases}
\end{equation*}
Therefore
\begin{align*}
	\|\nabla u\|_{M^B\RG}
		& = \sup_{s\in(0,1)}
			\frac{|\nabla u|^{**}(s)}{B^{-1}\left( \frac{1}{s} \right)}
			\\
		& =\max\left\{
			\sup_{s\in(0,s_0)}
				\frac{\left( \log\frac{1}{Ns} \right)^\frac{1}{\beta}}
				{B^{-1}\left( \frac{1}{s} \right)},
			s_0\left( \log\frac{1}{Ns_0} \right)^\frac{1}{\beta}
			\sup_{s\in(s_0,1)} \frac{1}{s B^{-1}\left( \frac{1}{s} \right)}
			\right\}
		= 1,
\end{align*}
where the last equality holds since the function $\frac{1}{s
B^{-1}\left(1/s\right)}$ is non-increasing, inasmuch as $B$ is a Young
function.

It remains to prove \eqref{apr1}. We have that
\begin{align} \label{apr2}
	\begin{split}
	\int_{\rn} e^{\left( \kappab |u| \right)^\frac{2\beta}{2+\beta}}\,\dgn
		& \ge \sqrt{\frac{2}{\pi}} \int_{t_0}^{\infty}  e^{\left(
				\kappab \int_{\Phi(t)}^{\Phi(t_0)} \frac{g(s)}{I(s)}\,\d s
			\right)^\frac{2\beta}{2+\beta}
			- \frac{t^2}{2}}\,\d t
			\\
		& = \sqrt{\frac{2}{\pi}} \int_{t_0}^{\infty}  e^{\left(
				\kappab \int_{t_0}^{t} g(\Phi(\tau))\,\d \tau
			\right)^\frac{2\beta}{2+\beta}
			- \frac{t^2}{2}}\,\d t,
	\end{split}
\end{align}
where we have made use of the change of variables $s=\Phi(\tau)$.
On the other hand, by the definition of~$g$,
\begin{equation} \label{apr3}
	\int_{t_0}^{t} g(\Phi(\tau))\,\d \tau
		=
		\int_{t_0}^{t} \left( \log \frac{1}{N\Phi(\tau)} \right)^\frac{1}{\beta}\,\d \tau
		- \frac{1}{\beta}
		\int_{t_0}^{t} \left(\log\frac{1}{N\Phi(\tau)}\right)^{\frac{1}{\beta}-1}\,\d \tau
		\quad\text{for $t\in(t_0,\infty)$}.
\end{equation}
Lemma~\ref{L:B-integral} implies that
\begin{equation*}
	\int_{t_0}^{t} \left( \log \frac{1}{N\Phi(\tau)} \right)^\frac{1}{\beta}\,\d \tau
		= 2^{-\frac{1}{\beta}}\frac{\beta}{2+\beta} t^{\frac{2}{\beta}+1}
			+
			\begin{cases}
				2^{-\frac{1}{\beta}} \frac{2}{2-\beta} t^{\frac{2}{\beta}-1}\log t
					+ \cdots
					& \text{if $\beta\in(0,2)$}
					\\
				\frac{1}{2\sqrt{2}}(\log t)^2
					+ \cdots
					& \text{if $\beta=2$}
			\end{cases}
\end{equation*}
as $t\to\infty$, and, by analogous computations,
\begin{equation*}
	\int_{t_0}^{t} \left( \log \frac{1}{N\Phi(\tau)} \right)^{\frac{1}{\beta}-1}\,\d \tau
		=
		\begin{cases}
			2^{1-\frac{1}{\beta}} \frac{\beta}{2-\beta} t^{\frac{2}{\beta}-1}
				+ \cdots
				& \text{if $\beta\in(0,2)$}
				\\
			\sqrt{2} \log t
				+ \cdots
				& \text{if $\beta=2$}
		\end{cases}
	\quad\text{as $t\to\infty$}.
\end{equation*}
Thereby,
the second integral on the right-hand side of  \eqref{apr3} is of a lower order than the first one
as $t\to\infty$. From  \eqref{E:Phi-prime} we thus infer that
\begin{equation*}
	\left( \kappab \int_{t_0}^{t} g(\Phi(\tau))\,\d \tau \right)^\frac{2\beta}{2+\beta}
		- \frac{t^2}{2}
		=
		\begin{cases}
			\frac{2}{2-\beta} \log t
				+ \cdots
				& \text{if $\beta\in(0,2)$}
				\\
			\frac{1}{2} (\log t)^2
				+ \cdots
				& \text{if $\beta=2$}
		\end{cases}
	\quad\text{as $t\to\infty$.}
\end{equation*}
Equation  \eqref{apr1} hence follows via
 \eqref{apr2}.
\end{proof}

Our last preparatory result is contained in the following proposition.  It is a
key step in the proofs of parts (1.ii) and (2.iii) of
Theorems~\ref{T:integral-form} and \ref{T:norm-form}, and of part (2.iii) of
Theorem~\ref{T:weak-form}, .

\begin{proposition} \label{P:LB-supercritical-beta-all}
Let $\beta>0$, $M>1$ and $\kappa>\kappa_\beta$.  Assume that $B$ is a Young
function satisfying condition
\eqref{BN} for some $N>0$.  Then there exists a function
$u\in\WexpLb$ such that $\med(u)=\mv(u)=0$,
\begin{equation}\label{july18}
	\|\nabla u\|_{L^B\RG} \le 1\,,
	\quad \quad
	\int_{\rn} e^{|\nabla u|^\beta}\,\dgn \le M\,
\end{equation}
and
\begin{equation}\label{july19}
	\int_{\rn} e^{\left( \kappa |u|\right)^\frac{2\beta}{2+\beta}}\,\dgn
		= \infty.
\end{equation}
\end{proposition}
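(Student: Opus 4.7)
The plan is to refine the construction used in Proposition~\ref{P:MB-critical-beta-small} so that, instead of saturating the Marcinkiewicz bound, $|\nabla u|$ satisfies the (weaker) $L^{B}$ bound together with the integral constraint, while preserving the leading asymptotic of $u^{\circ}$ that drives divergence when $\kappa>\kappab$. Concretely, fix any $c>1$, choose $T>0$ large, and set $s_{0}=\Phi(T)$. Define
\begin{equation*}
g(s) = \Bigl(\log\tfrac{1}{Ns}-c\log\log\tfrac{1}{Ns}\Bigr)^{\frac{1}{\beta}}
\quad\text{for $s\in(0,s_{0})$,}
\end{equation*}
which, for $T$ large enough, is positive and decreasing on $(0,s_{0})$ and exceeds the threshold in \eqref{BN}. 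Set
\begin{equation*}
u(x)=\sgn(x_{1})\int_{\Phi(|x_{1}|)}^{s_{0}}\frac{g(s)}{I(s)}\,\d s
\quad\text{for $|x_{1}|>T$,}\qquad u(x)=0\quad\text{for $|x_{1}|\le T$.}
\end{equation*}
Since $u$ is odd in $x_{1}$ and $\gamma_{n}$ is symmetric in $x_{1}$, $\mv(u)=\med(u)=0$.

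A direct calculation, using the identity $-\Phi'(r)=I(\Phi(r))$, yields $|\nabla u(x)|=g(\Phi(|x_{1}|))\,\chi_{\{|x_{1}|>T\}}$; the change of variables $s=\Phi(\tau)$ then gives
\begin{equation*}
\int_{\rn} B(|\nabla u|)\,\dgn = 2\int_{0}^{s_{0}}B(g(s))\,\d s,
\qquad
\int_{\rn} e^{|\nabla u|^{\beta}}\,\dgn = 1-2s_{0}+2\int_{0}^{s_{0}}e^{g(s)^{\beta}}\,\d s.
\end{equation*}
By the very choice of $g$, one has $B(g(s)) = Ne^{g(s)^{\beta}} = \bigl(s(\log(1/Ns))^{c}\bigr)^{-1}$ for $s$ close to $0$, which is integrable because $c>1$; the same holds for $e^{g(s)^{\beta}}$. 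Both right-hand sides therefore tend, respectively, to $0$ and to $1$ as $s_{0}\to 0^{+}$, so choosing $T$ sufficiently large enforces $\|\nabla u\|_{L^{B}\RG}\le 1$ together with $\int_{\rn} e^{|\nabla u|^{\beta}}\dgn\le M$, i.e., both conditions in \eqref{july18}.

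To establish \eqref{july19}, observe that $u^{\circ}(\Phi(t))=\int_{T}^{t}g(\Phi(\tau))\,\d\tau=:f(t)$ for $t\ge T$, hence
\begin{equation*}
\int_{\rn} e^{(\kappa|u|)^{\frac{2\beta}{2+\beta}}}\,\dgn
\ge \sqrt{\tfrac{2}{\pi}}\int_{T}^{\infty}e^{(\kappa f(t))^{\frac{2\beta}{2+\beta}}-\frac{t^{2}}{2}}\,\d t.
\end{equation*}
The $\log\log$ correction in $g$ is of lower order, so by Lemma~\ref{L:B-integral} the leading asymptotic of $f(t)$ coincides with that of $\int_{0}^{t}B^{-1}(1/\Phi(\tau))\,\d\tau$, namely $2^{-\frac{1}{\beta}}\tfrac{\beta}{2+\beta}\,t^{\frac{2}{\beta}+1}$. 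Since the very definition \eqref{E:kappab} of $\kappab$ amounts to $\bigl(\kappab\cdot 2^{-\frac{1}{\beta}}\tfrac{\beta}{2+\beta}\bigr)^{\frac{2\beta}{2+\beta}}=\tfrac{1}{2}$, one obtains
\begin{equation*}
(\kappa f(t))^{\frac{2\beta}{2+\beta}}-\tfrac{t^{2}}{2}
=\Bigl[\bigl(\tfrac{\kappa}{\kappab}\bigr)^{\frac{2\beta}{2+\beta}}-1\Bigr]\tfrac{t^{2}}{2}+o(t^{2}),
\end{equation*}
which tends to $+\infty$ since $\kappa>\kappab$; the displayed lower bound therefore diverges and \eqref{july19} follows.

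The main obstacle is the simultaneous satisfaction of the two gradient constraints, which forces $|\nabla u|$ to be strictly smaller than the Marcinkiewicz extremal $(\log(1/Ns))^{1/\beta}$; the doubly logarithmic correction with $c>1$ realises this with minimal loss, leaving the leading term of $f$ unchanged, and the strict inequality $\kappa>\kappab$ comfortably absorbs the ensuing lower-order perturbation.
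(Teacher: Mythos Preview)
Your proof is correct and follows the same template as the paper's: both build $u$ as an odd function of $x_1$ of the form $u(x)=\sgn(x_1)\int_{\Phi(|x_1|)}^{s_0} g(s)/I(s)\,\d s$, choosing $g$ so that $e^{g(s)^\beta}$ is integrable near $0$ while the leading asymptotic $g(\Phi(\tau))\sim 2^{-1/\beta}\tau^{2/\beta}$ is preserved. The only difference is in the damping mechanism: the paper takes $g(s)^\beta=\lambda\log\bigl(\Phi(t_0)/s\bigr)$ with $\lambda\in(0,1)$ close to~$1$, so that $e^{g^\beta}=(\Phi(t_0)/s)^\lambda$ integrates in closed form to $\Phi(t_0)/(1-\lambda)$ and the factor $\lambda^{2/(2+\beta)}$ lost in the exponent is absorbed by the strict inequality $\kappa>\kappab$; you instead subtract $c\log\log(1/Ns)$ with $c>1$, which makes $e^{g^\beta}\sim\bigl(s\log^c(1/Ns)\bigr)^{-1}$ integrable while leaving the leading coefficient of $f(t)$ untouched, so the divergence step is marginally more direct. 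Both choices work equally well; the paper's is slightly simpler computationally, yours slightly cleaner conceptually.
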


\begin{proof}
Let   $u$ be the function defined as in \eqref{E:u-def},
where
\begin{equation*}
	g(s) = 	\biggl(\lambda \log\frac{\Phi(t_0)}{s}\biggr)^\frac1\beta
		\quad\text{for $s\in(0,\Phi(t_0))$,}
\end{equation*}
for some  $t_0>0$ and $\lambda\in(0,1)$ to  be specified
later. Clearly $\med (u)=\mv (u) =0 $.  Also,
\begin{equation*}
	|\nabla u(x)| =
	\begin{cases}
		0
			& \text{for $|x_1|<t_0$}
			\\
		\Bigl(\lambda \log \frac{\Phi(t_0)}{\Phi(|x_1|)} \Bigr)^\frac1\beta
			& \text{for $|x_1|\ge t_0$}.
	\end{cases}
\end{equation*}
Let $\tau_0>0$ be such that $B(\tau)=Ne^{\tau^\beta}$ for $\tau\ge \tau_0$.
Then,
\begin{equation}\label{july17}
	\int_{\rn} B(|\nabla u|)\,\dgn
		\le \int_{\{0<|\nabla u|\le \tau_0\}} B(\tau_0) \,\dgn
			+ N \int_{\{|x_1| > t_0\}} e^{|\nabla u|^\beta}\,\dgn.
\end{equation}
Since the support of $\nabla u$ agrees with  the union of the  two half-spaces $\{x_1>t_0\}$ and $\{x_1<-t_0\}$,
\begin{equation*}
	\int_{\{0<|\nabla u|\le \tau_0\}} B(\tau_0)\,\dgn
		\le 2B(\tau_0)\Phi(t_0).
\end{equation*}
Furthermore,
\begin{align}
	\begin{split} \label{feb3}
	\int_{\{|x_1| > t_0\}} e^{|\nabla u|^\beta}\,\dgn
		& = 2\int_{t_0}^\infty \int_{\R^{n-1}}
				e^{\lambda \log \frac{\Phi(t_0)}{\Phi(x_1)}}\,\dgn(x)
			= \frac{2}{\sqrt{2\pi}} \int_{t_0}^{\infty}
				e^{\lambda \log \frac{\Phi(t_0)}{\Phi(x_1)}} e^{-\frac{x_1^2}{2}}\, \d x_1
			\\
		& = 2 \int_{0}^{\Phi(t_0)}
				\biggr( \frac{\Phi(t_0)}{s}\biggr)^\lambda \,\d s
			= 2 \Phi(t_0) \int_{0}^{1} \frac{\d s}{s^\lambda}
			= \frac{2}{1-\lambda} \Phi(t_0).
		\end{split}
\end{align}
Combining inequalities \eqref{july17}--\eqref{feb3} yields
\begin{equation*}
	\int_{\rn} B(|\nabla u|)\,\dgn
		\le 2B(\tau_0)\Phi(t_0) + \frac{2N}{1-\lambda} \Phi(t_0).
\end{equation*}
Therefore, if the constants $t_0$ and $\lambda$ are chosen in such a way that
\begin{equation} \label{E:a-and-lambda-condition-norm}
	2\Phi(t_0)\left(B(\tau_0) + \frac{N}{1-\lambda} \right) \le 1,
\end{equation}
then $\int_{\rn} B(|\nabla u|)\,\dgn \le 1$ and, by the definition of Luxemburg
norm, we have that $\|\nabla u\|_{L^B\RG}\le 1$, namely the first inequality
in \eqref{july18} holds.  As far as the second one is concerned, we infer from
\eqref{feb3} that
\begin{equation*}
	\int_{\rn} e^{|\nabla u|^\beta}\,\dgn
		\le \int_{\rn} e^0 \,\dgn
			+ \int_{\{|x_1| > t_0\}} e^{|\nabla u|^\beta}\,\dgn
		\le 1 + \frac{2}{1-\lambda} \Phi(t_0).
\end{equation*}
Hence, if $\lambda$ and $t_0$ also obey
\begin{equation} \label{E:a-and-lambda-condition-integral}
	\frac{2}{1-\lambda} \Phi(t_0)\le M-1,
\end{equation}
then $\int_{\rn} e^{|\nabla u|^\beta}\,\dgn \le M$. Thus, the second inequality in
\eqref{july18} is fulfilled as well.

In order to prove property \eqref{july19}, observe that,
similarly to \eqref{apr2},
\begin{equation} \label{mar1}
	\int_{\rn} e^{(\kappa|u|)^\frac{2\beta}{2+\beta}}\,\dgn
		\ge \sqrt{\frac{2}{\pi}} \int_{t_0}^{\infty}
			e^{
			\left(\kappa\lambda^\frac{1}{\beta}
				\int_{t_0}^{t}
				\left(\log\frac{\Phi(t_0)}{\Phi(\tau)}\right)^\frac1\beta \,\d \tau
			\right)^{\frac{2\beta}{2+\beta}} - \frac{t^2}{2}
			}\,\d t.
\end{equation}
Now, let $A$ be  any Young function
such that $A(t)=e^{t^\beta}$ near infinity.  By L'H\^opital's rule,
\begin{equation*}
	\lim_{t\to\infty}
		\frac{\displaystyle\int_{t_0}^{t}
			\left(\log\frac{\Phi(t_0)}{\Phi(\tau)}\right)^\frac1\beta \,\d \tau}
		{ \displaystyle \int_{0}^{t} A^{-1}\left( \frac{1}{\Phi(\tau)} \right)\d \tau }
		= \lim_{t\to\infty} \frac{\left(\log\frac{\Phi(t_0)}{\Phi(t)}\right)^\frac1\beta}
		{\left(\log\frac{1}{\Phi(t)}\right)^{\frac1\beta}}
		= 1.
\end{equation*}
Thereby, thanks to Lemma~\ref{L:B-integral},
\begin{equation*}
	\Biggl( \kappab
		\int_{t_0}^{t}
		\biggl(\log\frac{\Phi(t_0)}{\Phi(\tau)}\biggr)^\frac1\beta \,\d \tau
	\Biggr)^{\frac{2\beta}{2+\beta}}
		= \Biggl(\kappab\int_{0}^{t} A^{-1}\left( \frac{1}{\Phi(\tau)} \right)\d \tau
			+\cdots\Biggr)^\frac{2\beta}{2+\beta}
		= \frac{t^2}{2} + \cdots
			\quad\text{as $t\to\infty$}.
\end{equation*}
As a consequence,
\begin{equation} \label{dec7}
	\left(\kappa\lambda^\frac{1}{\beta}
				\int_{t_0}^{t}
				\left(\log\frac{\Phi(t_0)}{\Phi(\tau)}\right)^\frac1\beta \,\d \tau
			\right)^{\frac{2\beta}{2+\beta}} - \frac{t^2}{2}
	= \left( \lambda^\frac{2}{2+\beta}
			\left( \frac{\kappa}{\kappab} \right)^\frac{2\beta}{2+\beta}-1
		\right) \frac{t^2}{2} + \cdots
			\quad\text{as $t\to\infty$}.
\end{equation}
If $\kappa>\kappa_\beta$, one can choose $\lambda\in(0,1)$ sufficiently close
to 1 in such a way that the constant multiplying $\frac{t^2}2$ on the right-hand side of
\eqref{dec7} is positive. With this choice of $\lambda$, property
\eqref{july19} follows from \eqref{mar1}. Then, we choose $t_0$  large enough
for \eqref{E:a-and-lambda-condition-norm} and
\eqref{E:a-and-lambda-condition-integral} to hold, whence the inequalities in
\eqref{july18} hold as well.
\end{proof}

We are now in a position to accomplish the proofs of our main results.

\begin{proof}[Proof of Theorem~\ref{T:norm-form}]
\hypertarget{LB-critical-all-B-finite}
Assume that $\kappa>0$.
By Lemma~\ref{L:Holder-estimates},
\begin{equation} \label{apr16}
	\int_{\rn}
		e^{\left(\kappa |u| \right)^{\frac{2\beta}{2+\beta}}}\,\dgn
		\le \sqrt{\frac{2}{\pi}}
			\int_{0}^{\infty} e^{\left[ \kappa\FF_{L^B}(t) \right]^{\frac{2\beta}{2+\beta}}
				-\frac{t^2}{2}}\,\d t
\end{equation}
for any weakly differentiable $u$ obeying \eqref{E:nabla-Orlicz} and
\eqref{mu}, where $\FF_{L^B}$ is given by \eqref{E:F-LB}.  On the other hand,
Lemma~\ref{L:I-norm-sharp} tells us that
\begin{equation*}
	\bigl[\kappab
		\FF_{L^B}(t)
	\bigr]^\frac{2\beta}{2+\beta}
		- \frac{t^2}{2}
		= -
		\begin{cases}
		\frac{2}{2-\beta}\log t + \cdots
				& \text{if $\beta\in(0,2)$}
				\\
			\frac{1}{2}(\log t)^2 + \cdots
				& \text{if $\beta=2$}
		\end{cases}
		\quad\text{as $t\to\infty$}.
\end{equation*}
Hence, if $\beta\in(0,2]$ and $\kappa=\kappab$, then
the integral on the right-hand side of \eqref{apr16} converges. This proves part (1.i).
If $\beta\in(2,\infty)$ one infers, from Lemma~\ref{L:I-norm-sharp}, that
\begin{equation*}
	\bigl[\kappa
		\FF_{L^B}(t)
	\bigr]^\frac{2\beta}{2+\beta}
		= \left( \frac{\kappa}{\kappab} \right)^\frac{2\beta}{2+\beta}
			\frac{t^2}{2}
			+ \cdots
		\quad\text{as $t\to\infty$.}
\end{equation*}
The right-hand side of \eqref{apr16}
converges for $\kappa<\kappab$ also in this case, thus proving assertion (2.i).

Assertions (1.ii) and (2.iii) follow
from Proposition~\ref{P:LB-supercritical-beta-all}.

It remains to prove  statement (2.ii), corresponding to  the critical case
$\kappa=\kappab$ when $\beta\in(2,\infty)$.  The negative part is proved in
Proposition~\ref{P:LB-critical-beta-large}, whereas the positive part follows from
Proposition~\ref{P:mB-critical-beta-large} and from inequalities
\eqref{AtoM}.
\end{proof}

\begin{proof}[Proof of Theorem~\ref{T:integral-form}]
By Lemma~\ref{L:modular-implies-norm-to-M-is-B}, there exists a Young function
$B$ satisfying property \eqref{BN} for some $N>0$ and such that every weakly
differentiable function satisfying \eqref{E:integral-form-M} obeys
\eqref{E:nabla-Orlicz}.  Claims (1.i) and (2.i) then follow via
Theorem~\ref{T:norm-form} parts (1.i) and (2.i), respectively.

Assertions (1.ii) and (2.iii) are covered by
Proposition~\ref{P:LB-supercritical-beta-all}.

Let us now deal with the critical case when $\kappa=\kappab$.
Fix $N\in(0,1)$. By Theorem~\ref{T:norm-form}, part (2.ii),
there exists a Young function $B$
satisfying property \eqref{BN} such that
\begin{equation} \label{apr18}
	\int_{\rn} e^{(\kappab |u|)^{\frac{2\beta}{2+\beta}}} \,\dgn
		\le C
\end{equation}
for some $C=C(\beta,B)$ and for every weakly differentiable $u$ obeying \eqref{mu}
and \eqref{E:nabla-Orlicz}.  By Lemma~\ref{L:modular-implies-norm-to-B-is-M},
there exists a constant $M>1$ such that  condition \eqref{E:integral-form-M}
implies \eqref{E:nabla-Orlicz}. This proves the positive part of (2.ii).

Analogously, Theorem~\ref{T:norm-form}, part (2.ii), ensures that for any $N>0$
there exists a Young function satisfying condition \eqref{BN} for which
inequality \eqref{apr18} fails whatever $C$ is, as $u$ ranges over all weakly
differentiable functions obeying \eqref{E:nabla-Orlicz} and \eqref{mu}.  By
Lemma~\ref{L:norm-implies-modular-to-B-is-M}, there exists a constant $M>1$
such that inequality \eqref{E:nabla-Orlicz} is satisfied for any $u$ obeying
\eqref{E:integral-form-M}.  The proof is now complete.
\end{proof}

\begin{proof}[Proof of Theorem~\ref{T:weak-form}]
\hypertarget{MB-subcritical-all-B-finite}
We begin by showing assertions   (1.i) and (2.i).
Let $\kappa >0$. By Lemma~\ref{L:Holder-estimates},
\begin{equation} \label{apr17}
	\int_{\rn}
		e^{\left(\kappa |u| \right)^{\frac{2\beta}{2+\beta}}}\,\dgn
		\le \sqrt{\frac{2}{\pi}}
			\int_{0}^{\infty} e^{\left[ \kappa\FF_{m^B}(t) \right]^{\frac{2\beta}{2+\beta}}
				-\frac{t^2}{2}}\,\d t
\end{equation}
for any weakly differentiable function $u$ obeying
\eqref{E:nabla-Marcinkiewicz-m} and \eqref{mu}. Here,
$\FF_{m^B}$ is the function  given by \eqref{E:F-mB}.   Next, Lemma~\ref{L:B-integral} tells us that
\begin{equation*}
	\bigl[\kappa
		\FF_{m^B}(t)
	\bigr]^\frac{2\beta}{2+\beta}
		= \left( \frac{\kappa}{\kappab} \right)^\frac{2\beta}{2+\beta}
			\frac{t^2}{2}
			+ \cdots
		\quad\text{as $t\to\infty$}.
\end{equation*}
Hence, the integral on the right-hand side of \eqref{apr17} converges whenever
$\kappa<\kappab$. This proves  properties (1.i) and (2.i) under condition
\eqref{E:nabla-Marcinkiewicz-m}.  If $u$ satisfies condition
\eqref{E:nabla-Marcinkiewicz-M}, then \eqref{E:nabla-Marcinkiewicz-m} also
holds just owing to  \eqref{AtoM}. Hence, properties (1.i) and (2.i) follow
also in this case.

Assertion (1.ii) for the $M^B$ norm is a straightforward consequence of
Proposition~\ref{P:MB-critical-beta-small}, and for the $m^B$ quasi-norm it
requires  the additional use of inequality \eqref{AtoM}.

Property (2.ii) follows via Proposition~\ref{P:LB-critical-beta-large} and
Proposition~\ref{P:mB-critical-beta-large}, combined  with  inequalities
\eqref{AtoM}.

Finally, assertion (2.iii) is treated in
Proposition~\ref{P:LB-supercritical-beta-all}. Inequality \eqref{AtoM} has to
be exploited here as well.
\end{proof}

\begin{proof}[Proof of Theorem~\ref{T:L-infty}]
Assume that the function $u\in W^1L^\infty\RG$ fulfills conditions \eqref{mu}
and \eqref{Linfinity}. By Lemma~\ref{L:Holder-estimates-Linfty},
\begin{equation} \label{jun5}
	\int_{\rn} e^{(\kappa u)^2}\,\dgn
		\le \sqrt{\frac{2}{\pi}}
			\int_{0}^\infty e^{\left[\kappa \FF_{L^\infty}(t)\right]^2 - \frac{t^2}{2}}\,\d t,
\end{equation}
where the function $\FF_{L^\infty}$ is given by \eqref{E:F-infty}. By equation \eqref{E:Phi-prime},
\begin{equation*}
	-\Phi'(t)-t\Phi(t)\to 0
		\quad\text{as $t\to\infty$}.
\end{equation*}
Consequently,
\begin{equation}\label{july100}
	\left[ \kappa\FF_{L^\infty}(t) \right]^2 - \frac {t^2}{2}
		= \left( \kappa^2-\frac{1}{2} \right)t^2  + \cdots
		\quad\text{as $t\to\infty$,}
\end{equation}
under either  assumption $\med(u)=0$, or $\mv(u)=0$.
Thanks to equation \eqref{july100}, the integral on the right-hand side of
inequality \eqref{jun5} converges for every $\kappa\in(0,\tfrac1{\sqrt{2}})$.
Part (i) of the statement is thus established.

In order to show part (ii), it clearly suffices to assume that $\kappa =
\frac 1{\sqrt{2}}$. Consider the function $u\colon\rn\to\R$ defined as
$u(x)=x_1$ for $x\in\rn$. Trivially, $u\in W^1L^\infty\RG$, and  since
$|\nabla u(x)|=1$ for every $x\in\rn$, the function $u$ fulfills assumption
\eqref{Linfinity}.  Moreover, $\med(u)=\mv(u)=0$, and therefore condition
\eqref{mu} is fulfilled as well in both its variants. Notice that
$u^\circ(s)=\Phi^{-1}(s)$ for $s\in(0,1)$. Thereby,
\begin{align*}
	\int_{\rn} e^{\frac{1}{2}u^2}\,\dgn
		= \int_{0}^1 e^{\frac{1}{2}u^\circ(s)^2}\,\d s
		= \int_{0}^1 e^{\frac{1}{2}\Phi^{-1}(s)^2}\,\d s
		= \frac{1}{\sqrt{2\pi}}
				\int_{-\infty}^\infty e^{\frac{\tau^2}{2}} e^{-\frac{\tau^2}{2}}\,\d \tau
		= \infty,
\end{align*}
where the last but one equality holds by the  change of variables $s=\Phi(\tau)$.
The proof  is  complete.
\end{proof}

\begin{proof}[Proof of Theorem~\ref{T:E-space}]
Let $u\in W^1 \exp E^{\beta}\RG$ and let $B$ be a Young function satisfying
\eqref{BN}.  We may assume, without loss of generality, that
\begin{equation*}
	\|\nabla u\|_{L^B\RG}
		\le 1.
\end{equation*}
Let us also assume, for the time being, that $\med (u)=0$.  Let
$s_0\in(0,\tfrac 12)$. By the local absolute continuity of  $u^\circ$ and
H\"older's inequality \eqref{E:Holder},
\begin{align*}
	u^\circ(s)
		& = \int_{s}^{\frac12} -{u^\circ}'(r)\,\d r
			 = \int_{s}^{s_0} -{u^\circ}'(r)I(r) \frac{\,\d r}{I(r)}
			+ \int_{s_0}^{\frac12} -{u^\circ}'(r)I(r) \frac{\,\d r}{I(r)}
			\\
		& \le \lVert -{u^\circ}' I \rVert_{L^B(0,s_0)}
				\opnorm*{\frac{1}{I}}_{L^{\tilde B}(s,s_0)}
                 	+ \lVert -{u^\circ}' I \rVert_{L^B(0,\frac12)}
				\opnorm*{\frac{1}{I}}_{L^{\tilde B}(s_0,\frac12)}
			\quad\text{for $s\in (0,s_0)$.}
\end{align*}
Observe that
\begin{equation} \label{nov2-a}
	\lVert -{u^\circ}' I\rVert_{L^B(0,s_0)}
		\le \|(-{u^\circ}'I)^*\|_{L^B(0,s_0)}
		\le \| |\nabla u|^*\|_{L^B(0,s_0)}
		= \| \nabla u \|_{L^B(E)}
\end{equation}
for a suitable measurable set $E\subset\rn$ such that $\gamma_n(E)=s_0$.  Notice that
the first inequality in \eqref{nov2-a} holds as a consequence of the
Hardy-Littlewood principle \citep[Chapter~2, Theorem~4.6]{Ben:88},
since, owing to inequality \eqref{HL},
\begin{equation*}
	\int_0^s  (-{u^\circ}' I\chi_{(0,s_0)})^*(r)\, \d r
		\le \int_0^s (-{u^\circ}' I)^*\chi_{(0,s_0)}(r)\, \d r
			\quad \text{for $s\in(0,1)$.}
\end{equation*}
On the other hand, the second inequality in \eqref{nov2-a} is a consequence of
inequality \eqref{PS}.  Fix $\varrho>0$.  Since $|\nabla u|$ has an absolutely
continuous norm, by \eqref{nov2-a} one can choose $s_0$  so small that
\begin{equation*}
	\lVert -{u^\circ}' I \rVert_{L^B(0,s_0)}
		\le \| \nabla u \|_{L^B(E)} < \varrho.
\end{equation*}
Next, owing to Proposition~\ref{P:PSnorm},
\begin{equation*}
	\lVert -{u^\circ}' I \rVert_{L^B(0,\frac12)}
		\le \|\nabla u\|_{L^B\RG}
		\le 1,
\end{equation*}
and
\begin{equation*}
	\opnorm*{\frac{1}{I}}_{L^{\tilde B}(s_0,\frac12)}
		\le \frac{1}{I(s_0)} \opnorm*{1}_{L^{\tilde B}(0,\frac12)}
		= \frac{c}{I(s_0)},
\end{equation*}
where we have set $c =  \opnorm*{1}_{L^{\tilde B}(0,\frac12)}$.
Thanks to Lemma~\ref{L:I-norm-sharp} and equation \eqref{E:Phi-invers},
there exists a constant $C$ such that
\begin{equation*}
	\opnorm*{\frac{1}{I}}_{L^{\tilde B}(s,s_0)}
		\le C \Bigl( \log\frac{1}{s} \Bigr)^\frac{2+\beta}{2\beta}
		\quad\text{for $s\in(0,s_0)$}.
\end{equation*}
Altogether, we obtain that
\begin{align}\label{june40'}
	 0\leq  \,u^\circ(s)
		& \le C\varrho \Bigl( \log\frac{1}{s} \Bigr)^\frac{2+\beta}{2\beta}
		 	+ \frac{c}{I(s_0)}
		\quad\text{for $s\in(0,s_0)$}.
\end{align}
If the assumption that $\med (u)=0$ is dropped, from an application of
inequality \eqref{june40'} to the function $u- \med (u)$ one infers that
\begin{equation*}
	|u^\circ(s)|
		 \le C\varrho \Bigl( \log\frac{1}{s} \Bigr)^\frac{2+\beta}{2\beta}
		 	+ \frac{c}{I(s_0)} + |\med (u)|
		\quad\text{for $s\in(0,s_0)$}.
\end{equation*}
Now,
\begin{align} \label{nov0}
	\int_{\rn} e^{(\kappa|u|)^\frac{2\beta}{2+\beta}}\,\dgn
		& = \int_{0}^{1} e^{(\kappa |u^\circ(s)|)^\frac{2\beta}{2+\beta}}\,\d s
			=  \int_{0}^{\frac12} e^{(\kappa |u^\circ(s)|)^\frac{2\beta}{2+\beta}}\,\d s
				+ \int_{\frac12}^1 e^{(\kappa |u^\circ(s)|)^\frac{2\beta}{2+\beta}}\,\d s
\end{align}
and
\begin{equation}\label{july20}
	\int_{0}^{\frac12} e^{(\kappa |u^\circ(s)|)^\frac{2\beta}{2+\beta}}\,\d s
		= \int_{0}^{s_0} e^{(\kappa |u^\circ(s)|)^\frac{2\beta}{2+\beta}}\,\d s
			+ \int_{s_0}^{\frac12} e^{(\kappa |u^\circ(s)|)^\frac{2\beta}{2+\beta}}\,\d s.
\end{equation}
The second  integral on the right-hand side of equation \eqref{july20} is
finite, since $u^\circ$ is bounded in $(s_0, \frac12)$.  As for the first  one,
\begin{align*}
	\int_{0}^{s_0} e^{(\kappa |u^\circ(s)|)^\frac{2\beta}{2+\beta}}\,\d s
		& \le \int_{0}^{s_0} e^{
			\left(
				\kappa C\varrho \bigl( \log\frac{1}{s} \bigr)^\frac{2+\beta}{2\beta}
				+ \kappa\frac{c}{I(s_0)} + \kappa |\med (u)|
			\right)^\frac{2\beta}{2+\beta}}\,\d s
			\\
		& \le \int_{0}^{s_0} e^{C_1
			\left(
				\varrho^\frac{2\beta}{2+\beta} \log\frac{1}{s}
				+ \bigl(\frac{c}{I(s_0)}+|\med (u)|\bigr)^\frac{2\beta}{2+\beta}
			\right)}\,\d s
			\\
		& = e^{C_1 \bigl(\frac{c}{I(s_0)}+|\med (u)|\bigr)^\frac{2\beta}{2+\beta}}
			\int_{0}^{s_0}
				\Bigl( \frac{1}{s} \Bigr)^{C_1\varrho^\frac{2\beta}{2+\beta} }
				\d s
\end{align*}
for some constant $C_1$. Clearly, the last integral is convergent,
provided $\varrho$ is chosen small enough. Thus,
\begin{align} \label{nov4}
	\int_{0}^{\frac12} e^{(\kappa |u^\circ(s)|)^\frac{2\beta}{2+\beta}}\,\d s
		< \infty.
\end{align}
An analogous argument, exploiting equation \eqref{symmetry}, shows that
\begin{align} \label{nov5}
	\int_{\frac12}^1 e^{(\kappa |u^\circ(s)|)^\frac{2\beta}{2+\beta}}\,\d s
		< \infty.
\end{align}
Property \eqref{E:E-space-kappa} follows via \eqref{nov0}, \eqref{nov4} and
\eqref{nov5}.
\end{proof}

\paragraph{Acknowledgment}
We wish to thank Debabrata Karmakar for pointing out several misprints in a
preliminary version of our paper. We also thank the referee for his careful
reading of the paper and for his valuable comments.

\section*{Compliance with Ethical Standards}

\subsection*{Funding}

This research was partly funded by:

\begin{enumerate}
\item Research Project 2015HY8JCC of the Italian Ministry of University and
Research (MIUR) Prin 2015 ``Partial differential equations and related
analytic-geometric inequalities'';
\item GNAMPA of the Italian INdAM -- National Institute of High Mathematics
(grant number not available);
\item Grant P201-18-00580S of the Czech Science Foundation;
\item Grant SVV-2017-260455 of the Charles University.
\end{enumerate}

\subsection*{Conflict of Interest}

The authors declare that they have no conflict of interest.


\end{document}